\documentclass[a4paper,reqno,11pt]{amsart} 
\usepackage[T1]{fontenc}
\usepackage[utf8]{inputenc}

\usepackage{amsmath, amssymb, graphicx, color}
\usepackage[dvipsnames,svgnames,table]{xcolor}
\usepackage[foot]{amsaddr}
\usepackage[inline]{enumitem}
\setenumerate{label=\textup{(\roman*)}, noitemsep, topsep=3pt-\parskip, 
labelindent=.2em, leftmargin=*, widest=iii,}

\usepackage[x11names]{xcolor}
\usepackage{tikz}
\usetikzlibrary{arrows.meta}

\usepackage{textpos} 

\makeatletter
\def\thm@space@setup{
  \thm@preskip=4mm
  \thm@postskip=0mm
}
\makeatother

\usepackage{hyperref}
\hypersetup{
  pdfauthor={Maël Dumas, Jędrzej Hodor, Tomasz Mazur, Piotr Micek, Clément Rambaud, Giannos Stamoulis},
  pdftitle={Invitation to torso parameters of annotated graphs},
    colorlinks,
    linkcolor={RoyalBlue},
    citecolor={RubineRed},
    urlcolor={blue!80!black}
}
\usepackage[capitalise, noabbrev]{cleveref}
\usepackage{mathtools}

\theoremstyle{plain} 

\newtheorem{theorem}{Theorem}
\newtheorem{corollary}[theorem]{Corollary} 
\newtheorem{problem}[theorem]{Problem}
\newtheorem{conjecture}[theorem]{Conjecture}
\newtheorem{lemma}[theorem]{Lemma}
\newtheorem{obs}[theorem]{Observation}

\newtheorem*{theorem*}{Theorem}
\newtheorem*{corollary*}{Corollary} 
\newtheorem*{problem*}{Problem}
\newtheorem*{conjecture*}{Conjecture}
\newtheorem*{lemma*}{Lemma}
\newtheorem*{obs*}{Observation}
\newtheorem*{remark*}{Remark}
\newtheorem*{proposition*}{Proposition}
\newtheorem*{question*}{Question} 
\newtheorem*{example*}{Example}

\theoremstyle{remark} 
\newtheorem{claim}[theorem]{Claim} 
\newtheorem*{claim*}{Claim} 
 
\newtheorem*{definition*}{Definition} 
\bibstyle{plain}

\usepackage{thmtools, thm-restate}

\newenvironment{proofclaim}[1][]
    {\let\oldqed\qedsymbol\renewcommand{\qedsymbol}{\ensuremath{\lozenge}}\begin{proof}[Proof of the claim] }{\end{proof}\renewcommand{\qedsymbol}{\oldqed}}

\newcommand{\q}[1]{``#1''}

\colorlet{defcolor}{ForestGreen}
\newcommand{\defin}[1]{\relax\ifmmode{\color{defcolor}{#1}}\else{\emph{\textcolor{defcolor}{#1}}}\fi}

\newcommand{\gm}{f_{\boxplus}}

\newcommand{\N}{\mathbb{N}}
\newcommand{\posint}{\N}

\newcommand{\Oh}{\mathcal{O}}

\newcommand{\calB}{\mathcal{B}} 
\newcommand{\calC}{\mathcal{C}}
\newcommand{\calD}{\mathcal{D}}
 
\newcommand{\calF}{\mathcal{F}}
 
\newcommand{\calH}{\mathcal{H}}

\newcommand{\calM}{\mathcal{M}}

\newcommand{\calP}{\mathcal{P}} 
\newcommand{\calQ}{\mathcal{Q}}

\newcommand{\calT}{\mathcal{T}}

\newcommand{\calW}{\mathcal{W}}

\DeclareMathOperator\ep{ep}
\DeclareMathOperator\tw{tw}

\DeclareMathOperator\pw{pw}
\DeclareMathOperator\td{td}
\DeclareMathOperator\vc{vc}
\DeclareMathOperator\fv{fv}
\DeclareMathOperator\had{had}
\DeclareMathOperator\tdtwo{td_2}

\newcommand{\param}{\mathrm{p}}
\newcommand{\torso}{\mathrm{torso}}

\newcommand{\torsobullet}{\bullet}
\newcommand{\simono}{\mathrm{s}}

\newcommand{\bigO}{\mathcal{O}}

\newcommand{\stpath}[2]{$#1$--$#2$ path}
\newcommand{\stpaths}[2]{$#1$--$#2$ paths}

\let\le\leqslant
\let\ge\geqslant
\let\leq\leqslant
\let\geq\geqslant

\let\subset\subseteq

\let\epsilon\varepsilon

\let\setminus\backslash

\graphicspath{{figs/}}

\makeatletter 
\let\old@setaddresses\@setaddresses 
\def\@setaddresses{\bigskip\bgroup\parindent 0pt\let\scshape\relax\old@setaddresses\egroup}
\makeatother

\crefformat{equation}{#2(#1)#3}
\let\eqref\cref
\crefformat{subsection}{Subsection #2#1#3}

\begin{document} 
\title[Invitation to torso parameters of annotated graphs]{Invitation to torso parameters of annotated graphs}

\author[Dumas]{Ma\"{e}l Dumas}
\address[Dumas, Stamoulis]{Université Paris Cité, CNRS, IRIF, F-75013, Paris, France}
\email{mael.dumas@irif.fr}

\author[Hodor]{J\k{e}drzej Hodor}
\address[Hodor]{Theoretical Computer Science Department, 
Faculty of Mathematics and Computer Science and  Doctoral School of Exact and Natural Sciences, Jagiellonian University, Krak\'ow, Poland}
\email{jedrzej.hodor@gmail.com}

\author[Mazur]{Tomasz Mazur}
\address[Mazur, Micek]{Theoretical Computer Science Department, 
Faculty of Mathematics and Computer Science, Jagiellonian University, Krak\'ow, Poland}
\email{tom.mazur34@gmail.com}

\author[Micek]{Piotr Micek}
\email{piotr.micek@uj.edu.pl}

\author[Rambaud]{Clément Rambaud}
\address[Rambaud]{Mathematics department, TU Darmstadt, Germany}
\email{clement.rambaud@normalesup.org}

\author[Stamoulis]{Giannos Stamoulis}
\email{stamoulis@irif.fr}

\thanks{M.\ Dumas received funding from ERC grant BUKA (No.~101126229). J.\ Hodor is supported by a Polish Ministry of Education and Science grant (Perły Nauki; PN/01/0265/2022). 
T.\ Mazur and P.\ Micek were partially supported by the National Science Centre of Poland under grant UMO-2023/05/Y/ST6/00079 within the Weave-UNISONO program.
C.\ Rambaud has been funded by the European Union through European Research Council project WYDRA, grant agreement no. 101230549. Views and opinions expressed are however those of the authors only and do not necessarily reflect those of the European Union or the European Research Council Executive Agency. Neither the European Union nor the granting authority can be held responsible for them.
G.\ Stamoulis was supported by the project BOBR that is funded from the European Research Council (ERC) under the European Union’s Horizon 2020 research and innovation programme with grant agreement No. 948057. In particular, part of work on this manuscript was done while G.\ Stamoulis was affiliated with University of Warsaw.}

\begin{textblock}{20}(12.3, 11.2)
\includegraphics[width=35px]{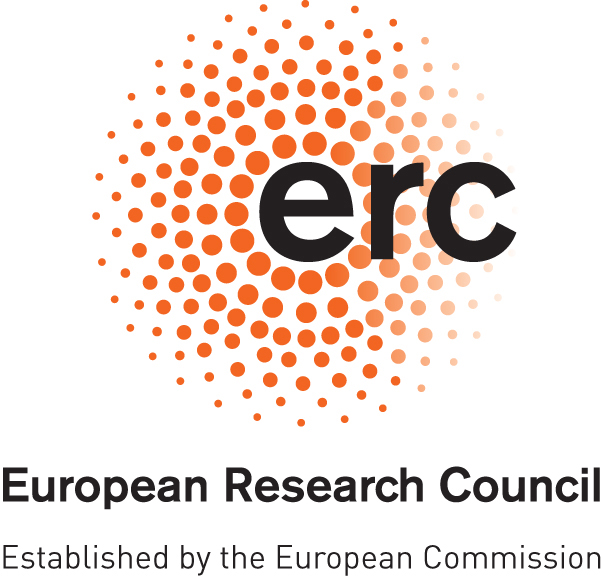}%
\end{textblock}
\begin{textblock}{20}(12.3, 11.9)
\includegraphics[width=35px]{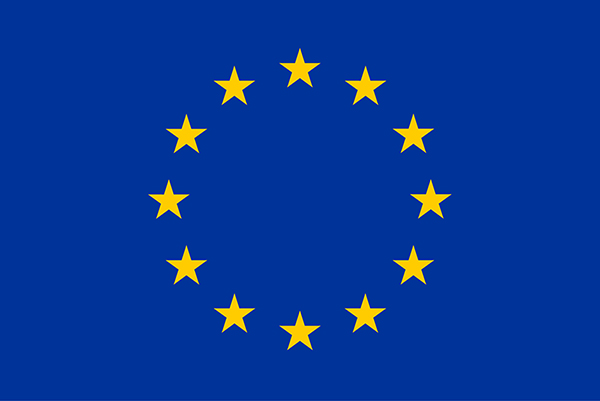}%
\end{textblock}

\begin{abstract}
    An annotated graph is a pair $(G,S)$, where $G$ is a graph and $S$ is a subset of vertices of $G$. 
    We present a unified approach for definitions of annotated versions of 
    classical graph parameters such as
    treewidth, pathwidth, treedepth, etc. 
    A generic way to incorporate a distinguished set of vertices into a graph parameter is via torsos. Given a graph $G$ and a subset $S'\subseteq V(G)$, we define $\torso_G(S')$ to be the graph with vertex set $S'$ in which distinct vertices $u,v\in S'$ are adjacent if and only if there exists a $u$--$v$ path in $G$ whose internal vertices lie outside $S'$. 
Now let $\param$ be any graph parameter. For an annotated graph $(G,S)$, we define the torso $\param$ of $(G,S)$ by
    \[\param(G,S) = \min\{\param(\torso_G(S')) \mid S \subset S' \subset V(G)\}.\]
For a minor-closed class of annotated graphs $\mathcal{C}$, we discuss equivalent conditions for $\param(G,S)$ bounded for all $(G,S)\in\mathcal{C}$ in terms of annotated graphs excluded in $\mathcal{C}$. 
In many cases, we collect known results from the literature. 
We include two new arguments for $\param$ being the $2$-treedepth and the feedback vertex number. 
Among other statements, we also include a variant of the Erdős–Pósa property for annotated cycles. 
We propose some new research directions.
\end{abstract}

\maketitle

\section{Introduction}\label{sec:introduction}

Structural graph theory provides a mathematical foundation for the development of powerful algorithmic techniques and the systematic design of efficient algorithms for computationally hard problems on structurally restricted graph classes. 
Central concepts such as graph minors and tree decompositions have played a particularly important role in this interplay between structure and algorithms. A prime example is the theory of treewidth and the celebrated Courcelle's Theorem~\cite{Courcelle90}.

In this paper, we study classical statements from structural graph theory in a setting where a graph is equipped with a distinguished set of vertices. An \defin{annotated graph} is a pair \defin{$(G,S)$}, where $G$ is a graph and $S\subseteq V(G)$. Our motivation for pursuing this line of research is twofold: algorithmic and structural.

Many problems in theoretical computer science take as input not only a graph, but also a distinguished subset of its vertices. For example, in the \textsc{Steiner Tree} problem, the input consists of a connected graph $G$ with nonnegative edge weights together with a subset of vertices of $G$, whose elements are usually referred to as \emph{terminals}. The goal is to find a connected subgraph of $G$ containing all terminals, possibly together with additional vertices, and having minimum total edge weight.

Although \textsc{Steiner Tree} is computationally hard in general, standard dynamic programming techniques solve it on $n$-vertex graphs of treewidth at most $t$ in time $f(t)\cdot n^{\mathcal{O}(1)}$. However, a tree decomposition of the input graph is oblivious to the distinguished set of terminals. In particular, the portion of the graph that is relevant for connecting the terminals may be structurally \emph{simple} and well separated from the remainder of the graph, while the remainder itself may be arbitrarily \emph{complicated}. Thus, the treewidth of the entire graph may substantially overestimate the structural complexity that is relevant to the problem.

A generic way to incorporate a distinguished set of vertices into a graph parameter is via torsos. Given a graph $G$ and a subset $S'\subseteq V(G)$, we define \defin{$\torso_G(S')$} to be the graph with vertex set $S'$ in which distinct vertices $u,v\in S'$ are adjacent if and only if there exists a \stpath{u}{v} in $G$ whose internal vertices lie outside $S'$. 
In particular, for every $S'\subseteq V(G)$, we  have  $G[S']\subseteq\torso_G(S')$.  

Now let $\param$ be any graph parameter; formally, $\param$ is a real-valued function on the isomorphism classes of finite simple graphs. For an annotated graph $(G,S)$, we define the \defin{torso $\param$} of $(G,S)$ by
    \[\defin{\param(G,S)} = \min\{\param(\torso_G(S')) \mid S \subset S' \subset V(G)\}.\]
Thus, rather than measuring the complexity of the whole graph $G$, we are allowed to enlarge the distinguished set $S$ to a set $S'$ and measure the complexity of the corresponding torso.
The simplest explanation why we should consider a superset of $S$ comes from the following example.
Consider $G$ to be a star with $S$ being the set of leaves.
Then, $\torso_G(S)$ is a complete graph while $G$ itself is very simple.

Jansen and Swennenhuis~\cite{JS24} proved that the \textsc{Steiner Tree} problem can be solved efficiently on $n$-vertex annotated graphs of torso treewidth at most $t$ in time $f(t)\cdot n^{\mathcal{O}(1)}$. 
This provides an algorithmic indication that torso versions of classical width parameters may capture the complexity of annotated instances more accurately than the corresponding parameters of the underlying graph.
The first implicit occurrence of a torso parameter that we could track is in a paper of Marx, O’Sullivan, and Razgon~\cite[Lemma 2.11]{MOR13}.

We now turn to the structural motivation for studying annotated graphs. In their seminal \emph{Graph Minors} series, spanning from 1983 to 2010, Robertson and Seymour developed a far-reaching theory describing the structure of graphs excluding a fixed graph as a minor. One of the central themes of this theory is the close relationship between excluded minors and width parameters such as treewidth and pathwidth. For instance, excluding a fixed planar graph as a minor is equivalent to having bounded treewidth, as follows from the Grid Minor Theorem~\cite{GM5}. Similarly, excluding a fixed forest as a minor is equivalent to having bounded pathwidth, as established already in the first paper of the series~\cite{GM1}.

Recently, Hodor, La, Micek, and Rambaud~\cite{quickly-excluding-apex-forest} studied analogous statements for excluded minors of apex type. Recall, for example, that a graph is an \emph{apex-forest} if it can be made acyclic by deleting at most one vertex.
They proved that for every apex-forest $X$,
$X$-minor-free graphs have bounded \emph{local} pathwidth, i.e. every radius $r$ balls induces a subgraph of pathwidth at most $\bigO(|V(X)| \cdot r)$.
Note that a similar statement with worse bounds was proved before by Dujmović, Eppstein, Joret, Morin, and Wood~\cite{lpw}.
A key novelty in the proof in~\cite{quickly-excluding-apex-forest} is the use of torso pathwidth.

These two independent developments, one arising from parameterized algorithms and the other from structural graph theory, suggest that width parameters for annotated graphs deserve a systematic study in their own right.

There is a very natural way of mimicking the minor relation in the realm of annotated graphs.
We say that a family $(B_x \mid x \in V(H))$ of  pairwise disjoint subsets of vertices of $G$ is a \defin{model} of an annotated graph $(H,T)$ in an annotated graph $(G,S)$ if
\begin{enumerate}
    \item for each $x \in V(H)$, $G[B_x]$ is a connected subgraph of $G$;
    \item for all $xy \in E(H)$, there exists $u \in B_x$ and $v \in B_y$ with $uv \in E(G)$; and
    \item for each $x \in T$, there exists $u \in S$ with $u \in B_x$.
\end{enumerate}
When there exists a model of an annotated graph $(H,T)$ in an annotated graph $(G,S)$, we say that $(H,T)$ is a \defin{minor} of $(G,S)$. In the case where $(H,V(H))$ is a minor of $(G,S)$, we also say that $H$ is an \defin{$S$-rooted minor} of $G$ and a model of $(H,V(H))$ in $(G,S)$ is called an \defin{$S$-rooted model} of $H$ in $G$.
Note that $H$ is a minor of $G$ if and only if $(H,V(H))$ is a minor of $(G,V(G))$.
Similar notions were already studied in literature~\cite{GMXIII,Kawarabayashi2021,Wollan-rooted}.

In this paper, we study the interplay between torso parameters and excluding annotated graphs as minors.
We (try to) list all the known results in this direction along with some new ones.
The graph parameters that we study are: vertex cover number, treedepth, pathwidth, treewidth, $2$-treedepth, feedback vertex number, and Hadwiger number; they are denoted respectively by: \defin{$\vc(\cdot)$}, \defin{$\td(\cdot)$}, \defin{$\pw(\cdot)$}, \defin{$\tw(\cdot)$}, \defin{$\td_2(\cdot)$}, \defin{$\fv(\cdot)$}, and \defin{$\had(\cdot)$}.
For each of the parameters, we prove or cite an approximate excluded annotated minor characterization.
We define 
    \begin{align*}
        \defin{\calF_{\vc}} &= \{(H,V(H)) \mid \text{$H$ is a matching}\},\\
        \defin{\calF_{\td}} &= \{(H,V(H)) \mid \text{$H$ is a path}\},\\
        \defin{\calF_{\pw}} &= \{(H,V(H)) \mid \text{$H$ is a forest}\},\\
        \defin{\calF_{\tw}} &= \{(H,T) \mid \text{$H$ is a plane graph and $T$ is contained in the outer face of $H$}\},\\
        \defin{\calF_{\td_2}} &= \{(H,T) \mid \text{$H$ is a ladder and $T$ is a side of $H$}\},\\
        \defin{\calF_{\fv}} &= \{(H,V(H)) \mid \text{$H$ is a disjoint union of triangles}\},\\
        \defin{\calF_{\had}} &= \{(H,V(H)) \mid \text{$H$ is a complete graph}\}.
    \end{align*}


A graph class is \defin{minor-closed} if for every graph in the class, all its minors also belong to the class.

\begin{theorem}\label{thm:main}
    For each $\param \in \{\vc, \td, \pw, \tw, \td_2, \fv, \had\}$ and for every minor-closed class of annotated graphs $\calC$, there exists a graph $(H,T) \in \calF_\param$ with $(H,T) \notin \calC$ 
    if and only if 
    there exists a constant $c$ such that for every $(G,S) \in \calC$, we have $\param(G,S) \leq c$.
\end{theorem}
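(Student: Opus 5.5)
The plan is to prove, for each $\param$, two directions. The first is an \emph{obstruction direction}: the annotated graphs in $\calF_\param$ have unbounded torso $\param$. The second is a \emph{structural direction}: excluding one member of $\calF_\param$ bounds torso $\param$.

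Given the obstruction direction, the forward implication of \cref{thm:main} reads as follows: if $\calC$ is minor-closed and misses some $(H,T)\in\calF_\param$, then every $(G,S)\in\calC$ excludes $(H,T)$ as a minor, so the structural direction gives $\param(G,S)\le c$. For the converse, suppose $\param$ is bounded on $\calC$ but $\calC$ contains all of $\calF_\param$. This contradicts the obstruction direction.

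First I would record the monotonicity that drives the obstruction direction. If $(H,T)$ is a minor of $(G,S)$, then $\param(H,T)\le \param'(G,S)$, where $\param'$ is a function of $\param$; the dependence comes from how $\param$ behaves under minors. To see this, take an optimal $S'\supseteq S$ for $(G,S)$ and a model $(B_x)$. The branch sets meeting $S'$ induce a minor of $\torso_G(S')$: contract each $B_x\cap S'$ through torso edges. Collapsing paths outside $S'$ into torso edges preserves connectivity and adjacency. Since each $x\in T$ has a branch set meeting $S\subseteq S'$, we get a set $T'\supseteq T$ with $\torso_H(T')$ a minor of $\torso_G(S')$.

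This reduction works for all seven parameters, since each is minor-monotone. For $\vc$, $\td$, $\pw$, $\tw$, $\fv$ and $\had$ this holds up to at most a constant factor. For $\td_2$ it holds up to a function, taking it from the relevant earlier section of the paper.

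Then the obstruction direction reduces to lower-bounding $\param(H,T)$ on the families themselves:
\begin{itemize}
\item for $\vc$: a matching of size $n$ has all of its vertices in $S$, so the torso contains the matching and $\vc\ge n$;
\item for $\td$, $\pw$ and $\had$: the torso contains $H$ itself, so the torso parameter is at least $\param(H)$, and this is unbounded for long paths, complete ternary trees and cliques;
\item for $\fv$: $n$ disjoint triangles force $\fv\ge n$;
\item for $\tw$: a grid with its boundary annotated has torso containing a minor of large treewidth;
\item for $\td_2$: a long ladder with one side annotated; here I would argue directly that every torso contains either a long path-like structure or a large clique.
\end{itemize}

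The structural direction is where the real work lies. I would cite known results for most parameters. For $\tw$ I would use the rooted grid minor theorem for annotated graphs (excluding a plane graph with terminals on its outer face), as in the work of Marx, Seymour and Wollan on rooted grids. For $\pw$ I would use the result of Hodor, La, Micek and Rambaud~\cite{quickly-excluding-apex-forest} on torso pathwidth. For $\had$ I would use rooted $K_t$-minor results~\cite{Kawarabayashi2021}. For $\vc$ the argument is a direct matching/Kőnig-style one: a large $S$-rooted matching minor is found via $S$-paths, Gallai style. For $\td$ one takes long $S$-rooted paths; a DFS tree of torso arguments gives the bound.

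The two new cases need arguments of their own. For $\fv$ I would prove an Erdős–Pósa property for $S$-cycles, i.e.\ cycles through at least three vertices of $S$ giving an $S$-rooted triangle. Many disjoint such cycles give many rooted triangles; otherwise a bounded hitting set $X$ exists. I would then add $X$ and suitable cut vertices to $S'$ so that the torso is a forest plus $X$.

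For $\td_2$, excluding an $S$-rooted ladder should force bounded torso pathwidth-like structure with bounded blocks. I expect this $\td_2$ case to be the main obstacle. I would first bound torso treewidth, then analyze the $2$-connected torso pieces and induct on the treedepth of the block tree.
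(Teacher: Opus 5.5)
Your two-direction skeleton matches the paper's, and citing the literature for $\vc,\td,\pw,\tw$ is what the paper does too. The problems are in the cases that need actual arguments.

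\textbf{Feedback vertex number.} You work with the wrong object: an $S$-rooted $K_3$ need not contain any cycle through a vertex of $S$. Take a triangle $abc$ with pendant edges $as_a,bs_b,cs_c$ and $S=\{s_a,s_b,s_c\}$. No cycle meets $S$, so your hitting set is $X=\emptyset$. But $(\{a,s_a\},\{b,s_b\},\{c,s_c\})$ is an $S$-rooted model of $K_3$, so by \cref{obs:projecting-S-rooted-minors-in-torso} every $\torso_G(S')$ with $S\subseteq S'$ contains a cycle, and $\fv(G,S)=1>|X|$. Hence the step ``add $X$ and suitable cut vertices to get a forest plus $X$'' fails. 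Two ingredients are needed instead.
\begin{enumerate}
\item[(a)] The exact characterization \cref{thm:obstructions-forests}, for which your ``cut vertices'' are the right intuition but need a construction and proof. If there is no $S$-rooted $K_3$, take an edge-minimal tree $T$ containing $S$ and let $B$ be its vertices of degree at least $3$. The hypothesis shows that each $u\in S\cup B$ separates the components of $T-u$ inside $G-u$. So $\torso_G(S\cup B)$ is $T$ with its other (degree-$2$) vertices suppressed, which gives \cref{cor:fv}.
\item[(b)] An Erd\H{o}s--P\'osa property for $S$-rooted $K_3$-models themselves. The paper gets this from \cref{thm:rooted:gridminor} when $\tw(G,S)$ is large, since disjoint fully rooted triangles lie in $\calF_{\tw}$. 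Otherwise it applies the Helly-type \cref{lem:tw:erdos:posa} to a bounded-width torso tree decomposition, and at most $k-1$ bags form the hitting set.
\end{enumerate}

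\textbf{$2$-treedepth.} You give no real argument in either direction.

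For the lower bound, a ``long path-like structure'' proves nothing, since trees have $\td_2\le 2$. The paper combines \cref{obs:td2} with minor-monotonicity of $\td_2$, which makes \cref{obs:monotonicity} exact rather than ``up to a function''. This yields $\td_2(L_{2^\ell},T_{2^\ell})\ge\ell$.

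For the upper bound, the hypothesis must be exclusion of $(L_k,T_k)$ with only one side rooted. Excluding a fully $S$-rooted ladder is not enough: $(L_n,T_n)$ never contains $(L_5,V(L_5))$, yet $\td_2(L_n,T_n)\to\infty$. To see this, add an apex over the top row. Planarity then forces all branch sets except two, consecutive on the outer cycle of $L_5$, to be top-row intervals. Such intervals can only realize a subgraph of a path, while $L_5$ minus two such vertices contains a $4$-cycle.

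Inducting over the block tree also misses where the difficulty lies. By \cref{lem:S_B}, $\td_2(G,S)$ is the maximum of $\td_2(B,S_B)$ over blocks $B$, so all the work happens inside one $2$-connected block. There the paper proves \cref{lemma:td2:main_induction}: unless $(L_\ell,T_\ell)$ is a minor, large $\td_2(G,S)$ yields an $L_k$-model whose first column contains prescribed $u_1,u_2$ and each of whose columns meets $S$. The induction step works as follows.
\begin{enumerate}
\item[(i)] It deletes a bounded set $Z^\circ$ meeting every connected subgraph that meets $S$ and the interiors of two internally disjoint $u_1$--$u_2$ paths (and likewise for two paths from some $s\in S$ to $\{u_1,u_2\}$).
\item[(ii)] Such a $Z^\circ$ exists by the packing \cref{lemma:td2:packing_of_steps_makes_a_ladder}, which relies on the torso-treewidth bound, \cref{lem:adhesion} and Erd\H{o}s--Szekeres, together with \cref{lem:tw:erdos:posa}.
\item[(iii)] It recurses into a block of $G-Z^\circ$ with large $\td_2$ and adds one column along the fixed paths.
\end{enumerate}
A pigeonhole over rows then converts $L_{2k-1}$ into $(L_k,T_k)$.

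\textbf{Hadwiger number.} \cite{Kawarabayashi2021} is Kawarabayashi--Rossman on treedepth, not rooted minors, and you give no route from rooted-minor theorems to torsos. What you need is that no $S$-rooted $K_k$ implies some torso is $K_{2k}$-minor-free. This is \cite[Corollary~13]{GMT-revisited}, or the paper's separation argument in \cref{lemma:appendix:rooted_model_or_separation} and \cref{lemma:find_rooted_cliques}, followed by passing to $\torso_G(V(A)\cup V(C))$ and inducting on $|V(G)|$.
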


See a visual summary in~\Cref{tab:table}.
\Cref{thm:main} is proved in~\Cref{sec:parameters}.
The statements for $\param \in \{\vc, \td, \pw, \tw\}$ are already known in the literature~\cite{Gallai64,quickly-excluding-apex-forest,as-quickly-as-possible,Marx2017}, and we include them for completeness and illustration without proofs.
The statement for $\param = \had$ can also be derived from known results, see~\cite[Corollary 13]{GMT-revisited} or~\cite[Theorem~1.2]{ColorfulMinors}.
Since, to our knowledge, there is no explicit proof in the literature, we include the proof.
Finally, the proofs for $\param \in \{\td_2,\fv\}$ are new. 
It is well-known that paths are obstructing bounded treedepth, however, the treedepth of a path is logarithmic in its size.
Kawarabayashi and Rossman~\cite{Kawarabayashi2021} found obstructions for bounded treedepth that have polynomial treedepth.
We provide an annotated variant of this statement, see~\Cref{th:focusedABC}.


\begin{table}[tp]
    \centering
    \renewcommand{\arraystretch}{2.5}
    \setlength{\tabcolsep}{20pt}
    \tikzset{every picture/.style={every node/.style={draw, circle, fill, minimum size = 4pt, inner sep = 0pt}}}
    \begin{tabular}{c | c | c}
         $\param$ & $\mathcal{F}_\param$ & Reference \\
         \hline
         $\vc$ & 
             \tikz[scale=0.5, baseline=-3pt]{\foreach \i in {0,...,4}{
                 \node[rectangle, red] (u\i) at (\i,-0.5) {};
                 \node[rectangle, red] (v\i) at (\i,0.5) {};
                 \draw (u\i) -- (v\i);
                 \draw[opacity=0] (-0.5,-1) rectangle (4.5, 1);
             }} & \Cref{thm:vc} \\
        $\td$ & 
             \tikz[scale=0.5, baseline=-3pt]{
               \foreach \i in {0,...,5}{
                 \node[rectangle, red] (u\i) at (\i,0) {};
                }
                \foreach \i [evaluate=\i as \j using {int(\i+1)}] in {0,...,4}{
                    \draw (u\i) -- (u\j);
                }
                \draw[opacity=0] (-0.5,-1) rectangle (5.5, 1);
             } & \cite[Theorem~6]{quickly-excluding-apex-forest} \\
        $\pw$ & 
             \tikz[xscale=0.5, yscale=0.33, baseline=(v0.base)]{
                \node[rectangle, red] (v) at (0,0) {};
                
                \node[rectangle, red] (v0) at (-1,-0.87) {}; \draw (v) -- (v0);
                \node[rectangle, red] (v00) at (-1.5,-1.7) {}; \draw (v00) -- (v0);
                \node[rectangle, red] (v01) at (-0.5,-1.7) {}; \draw (v01) -- (v0);
                
                \node[rectangle, red] (v1) at (1,-0.87) {}; \draw (v1) -- (v);
                \node[rectangle, red] (v10) at (1.5,-1.7) {}; \draw (v10) -- (v1);
                \node[rectangle, red] (v11) at (0.5,-1.7) {}; \draw (v11) -- (v1);

                \draw[opacity=0] (-2,-3) rectangle (2,0.5);
             } & \cite[Theorem~7]{quickly-excluding-apex-forest} \\
        $\tw$ & 
             \tikz[scale=0.33, baseline=(v22.base)]{ 
                \node[rectangle, red] (v00) at (0,0) {};
                \draw (v00) -- (0,1);
                \draw (v00) -- (1,0);
                
                 \foreach \x in {1,...,3}{
                     \node[rectangle, red] (v\x0) at (\x,0) {};
                     \draw (v\x0) -- (\x,1);
                     \draw (v\x0) -- (\x+1,0);
                     \foreach \y in {1,...,3}{
                         \node (v\x\y) at (\x,\y) {};
                         \draw (v\x\y) -- (\x, \y+1);
                         \draw (v\x\y) -- (\x+1, \y);
                     }
                 }
                 
                \foreach \y in {1,...,3}{
                     \node[rectangle, red] (v0\y) at (0,\y) {};
                     \draw (v0\y) -- (0, \y+1);
                     \draw (v0\y) -- (1, \y);
                }
                \foreach \y in {0,...,3}{                     
                     \node[rectangle, red] (v4\y) at (4,\y) {};
                     \draw (v4\y) -- (4, \y+1);
                 }

                 \foreach \x in {0,...,3}{
                     \node[rectangle, red] (v\x4) at (\x,4) {};
                     \draw (v\x4) -- (\x+1,4);
                 }
                 \node [rectangle, red] at (4,4) {};

                 \draw[opacity=0] (-1,-1) rectangle (5,5);
             } & \cite{Marx2017} and \cite[Theorem~8]{quickly-excluding-apex-forest} \\
        $\td_2$ & 
             \tikz[scale=0.4, baseline=3pt]{ 

                 \foreach \x in {0,...,5}{
                     \node[rectangle, red] (u\x) at (\x,0) {};
                     \node (v\x) at (\x,1) {};
                     \draw (u\x) -- (v\x);
                 }
                 \foreach \x [evaluate=\x as \xx using {int(\x+1)}] in {0,...,4}{
                      \draw (u\x) -- (u\xx);
                      \draw (v\x) -- (v\xx);
                 }
                 \draw[opacity=0] (-0.5,-1) rectangle (5.5,2);
             } & \Cref{thm:rooted:tdtwo:ladder} \\
        $\fv$ & 
             \tikz[scale=0.4, baseline=0pt]{ 
                 \foreach \x in {0,...,2}{
                     \begin{scope}[shift={(2.66*\x,0)}, rotate=-30]
                         \node[rectangle, red] (u\x0) at (0:0.8) {};
                         \node[rectangle, red] (u\x1) at (120:0.8) {};
                         \node[rectangle, red] (u\x2) at (240:0.8) {};
                         \draw (u\x0) -- (u\x1) -- (u\x2) -- (u\x0);
                     \end{scope}
                 }
                 \draw[opacity=0] (-1,-1) rectangle (5,2);
             } & \Cref{thm:fv_EP} \\
        $\had$ & 
             \tikz[scale=0.5, baseline=-3pt]{ 
                \node[rectangle, red] (v0) at (0:1) {};
                 \foreach \x [evaluate=\x as \xx using {int(\x-1)}] in {1,...,4}{
                     \node[rectangle, red] (v\x) at (\x*360/5:1) {};
                     \foreach \y in {0,...,\xx}{
                         \draw (v\y) -- (v\x);
                     }
                 }
                 \draw[opacity=0] (-2,-2) rectangle (2,2);
             } & \Cref{thm:had} \\
    \end{tabular}

    \vspace{10pt} 
    
    \caption{Summary of \Cref{thm:main}, 
        which assert that, for each parameter
        $\param \in \{\vc,\td,\pw,\tw,\td_2,\fv,\had\}$, 
        a minor-closed class of annotated graphs has bounded $\param$ if and only if 
        it excludes a member of $\mathcal{F}_\param$.
        Here, the annotated vertices are depicted as red squares.}
    \label{tab:table}
\end{table}

\subsection{Erdős–Pósa property for annotated models}
In 1965, Erdős and Pósa~\cite{EP65} proved that there is a function $f(k) = \Oh(k\log k)$ such that for every graph $G$ and every positive integer $k$, either $G$ contains $k$ vertex-disjoint cycles, or there is a set $X$ of at most $f(k)$ vertices of $G$ such that $G-X$ is a forest.
Informally, we say that cycles have \emph{the Erdős–Pósa property}.
Here, we are interested in Erdős–Pósa property of models of a fixed graph $H$.
It is well-known that models of $H$ have the Erdős–Pósa property if and only if $H$ is planar~\cite{GM5}.
The positive direction of the statement above follows from the grid minor theorem.
Van Batenburg, Huynh, Joret, and Raymond~\cite{tightEP} proved that when $H$ is planar, the function witnessing the Erdős–Pósa property of models of $H$ is in $\Oh(k\log k)$.

We are interested in the Erdős--Pósa property of models of annotated graphs in annotated graphs.
The annotated variant of the grid theorem (\Cref{thm:main} for $\param = \tw$ or \cite[Theorem~9]{quickly-excluding-apex-forest}) implies that if $H$ is a connected planar graph with a fixed plane embedding and $T$ is a subset of the vertices of the outer face of $H$, then the models of $(H,T)$ in annotated graphs have the Erdős--Pósa property.

\begin{theorem}[{\cite[Corollary 24]{quickly-excluding-apex-forest}}]\label{EP-sota}
    For every connected graph $H$ with a fixed plane embedding,
    for every subset $T \subset V(H)$ of the vertices on the outer face of $H$,
    there exists a function $f_{(H,T)}\colon \mathbb{N} \to \mathbb{N}$ such that
    for every annotated graph $(G,S)$, and
    for every positive integer $k$, 
    either
    \begin{enumerate}
        \item $(G,S)$ has $k$ disjoint models of $(H,T)$ or\label{item:EP_thm_i}
        \item there exists a set $Z \subseteq V(G)$ such that $|Z| \leq f_{(H,T)}(k)$ and $G-Z$ has no models of $(H,T)$.\label{item:EP_thm_ii}
    \end{enumerate}
\end{theorem}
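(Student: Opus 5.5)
We derive this from the $\tw$ case of \Cref{thm:main}, i.e.\ the annotated grid theorem, by the standard Erdős–Pósa scheme: induct on $k$ and use that bounded torso treewidth gives small balanced separators relative to the models. Since $H$ is connected, planar, and $T$ lies on its outer face, the class of annotated graphs with no $k$ disjoint models of $(H,T)$ is minor-closed. We claim it excludes some member of $\calF_{\tw}$. Take $(H',T')$ to be $k$ disjoint copies of $(H,T)$, placed side by side in the plane so that all copies of $T$ lie on the common outer face. Then $(H',T') \in \calF_{\tw}$ (after adding edges if connectedness is required, which is harmless since we only need a minor). Hence, by \Cref{thm:main}, every $(G,S)$ with no $k$ disjoint models of $(H,T)$ satisfies $\tw(G,S) \le c_k$.

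Now fix such a $(G,S)$ and choose $S \subseteq S' \subseteq V(G)$ with $\tw(\torso_G(S')) \le c_k$. Let $(\calT,\beta)$ be a tree decomposition of $\torso_G(S')$ of width $c_k$. Every model of $(H,T)$ with $T \neq \emptyset$ contains a vertex of $S \subseteq S'$. If $T=\emptyset$, we are in the classical unannotated setting and cite \cite{GM5}.

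For each model, consider the set of nodes of $\calT$ whose bags meet the model's branch sets in $S'$. We argue that, after routing through components of $G - S'$, a model can only touch $S'$ through vertices that are connected in the torso. Deleting a bag $B$ of $\calT$, of size at most $c_k+1$, from $G$ then separates the parts of $G$ attached to different subtrees of $\calT - t$. This holds because every component $C$ of $G-S'$ has its neighbourhood $N(C)$ forming a clique in the torso, and hence contained in a single subtree side.

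We then proceed by the usual argument. Root $\calT$ and pick a deepest node $t$ such that the graph $G_t$, consisting of the vertices of bags in the subtree at $t$ together with the attached components of $G-S'$, contains a model of $(H,T)$. Delete $\beta(t)$. Every remaining model lies entirely on the other side and uses a model-free-below structure, so $G - \beta(t) - G_t$ has no $k-1$ disjoint models. Induction then gives $f(k) \le f(k-1) + c_k + 1$.

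The main obstacle is the separation step. A model is connected in $G$ but may not sit inside a single torso subtree, because its branch sets may wander through components of $G-S'$ and contain vertices outside $S'$ only. One also needs that no model lies entirely in $G - S'$. This fails in general, since components of $G-S'$ may contain models if $T=\emptyset$. With $T \neq \emptyset$, however, each model hits $S$, and a connected model avoiding $\beta(t)$ must lie within one side, since paths between sides pass through $\beta(t)$ by the clique property. The second point to verify carefully is the ``deepest node'' choice: the children's sides must each be model-free. That follows from the minimality of $t$, provided the child subgraphs are defined consistently, with each component of $G-S'$ assigned to the highest node containing its neighbourhood.
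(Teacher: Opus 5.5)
Your proposal is correct and follows the route the paper indicates. The paper cites the result, but the intended argument is visible in its remark before the statement and in the proof of \Cref{thm:fv_EP}: bound $\tw(G,S)$ by excluding $k$ disjoint copies of $(H,T)$, which form a member of $\calF_{\tw}$, via the annotated grid theorem, and then take a hitting set made of bags of the torso decomposition. Your deepest-node induction, with each component of $G-S'$ attached to a node whose bag contains its neighbourhood, is an unpacked proof of the Helly-type \Cref{lem:tw:erdos:posa}, which gives a hitting set of at most $k-1$ bags in one step.
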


Combining with the most recent improvements on the grid theorem due to Chuzhoy and Tan~\cite{Chuzhoy2021} and Bonnet~\cite{Bonnet}, we may have $f_{(H,T)}(k) = \widetilde{\Oh}(k^{33})$ in~\Cref{EP-sota}.
We conjecture that $f_{(H,T)}(k) = \Oh(k\log k)$ is possible, see \cref{conj:rooted_EP_k_logk}.
As evidence, we prove the first non-trivial case.

\begin{theorem}\label{thm:EP-triangles-in-one-vx}
    Let $H = K_3$ and let $T$ be a one-vertex subset of $V(H)$.
    There exists a constant $c$ such that 
    for every annotated graph $(G,S)$ and
    for every positive integer $k$, 
    either
    \begin{enumerate}
        \item $(G,S)$ has $k$ disjoint models of $(H,T)$ or
        \item there exists a set $Z \subseteq V(G)$ such that $|Z| \leq c \cdot  k \log k$ and $(G-Z, S \setminus Z)$ has no model of $(H,T)$.
    \end{enumerate}
\end{theorem}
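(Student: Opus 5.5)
The plan is to reduce \Cref{thm:EP-triangles-in-one-vx} to the Erdős–Pósa property of \emph{$S$-cycles}, that is, cycles meeting $S$, and then deal with what remains by a tree argument.

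\textbf{Reformulation.} A model of $(K_3,T)$ with $|T|=1$ in $(G,S)$ is the same as a connected subgraph of $G$ containing both a cycle and a vertex of $S$. Indeed, the three branch sets give a cycle through all three of them, and the branch set of the vertex in $T$ contains a vertex of $S$ joined to that cycle inside the branch set. Conversely, such a subgraph is a cycle $C$ together with a (possibly trivial) path $P$ from $C$ to $S$, and splitting $C$ into three arcs, one of which absorbs $P$, gives a model. So we must either find $k$ pairwise disjoint connected subgraphs, each containing a cycle and an $S$-vertex, or delete $\Oh(k\log k)$ vertices so that every component of $G-Z$ that meets $S$ is a forest.

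\textbf{Step 1: remove $S$-cycles.} Every $S$-cycle is a model. I would invoke the known $\Oh(k\log k)$ Erdős–Pósa theorem for $S$-cycles (Pontecorvi–Wollan, with the tight bound of Bruhn, Joos and Schaudt). Either it gives $k$ disjoint $S$-cycles and we are done, or it gives a set $X$ of size $\Oh(k\log k)$ meeting all $S$-cycles. After deleting $X$, every remaining model is a cycle avoiding $S$ plus a nontrivial path to $S$. Equivalently, in each component of $G-X$, no block that contains a cycle contains a vertex of $S$.

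\textbf{Step 2: a tree problem on the block–cut trees.} Fix a component $D$ of $G-X$ and its block–cut tree $\calT_D$. Call a block node \emph{red} if the block contains a cycle, and call a node \emph{blue} if it contains a vertex of $S$. By Step 1 no node is both red and blue, except possibly cut vertices in $S$ lying on red blocks; I would handle those by adding them to $X$ via the same charging. A model corresponds to a subtree of $\calT_D$ containing a red node and a blue node. Conversely, disjoint such subtrees whose only overlaps are at block nodes can be realized disjointly in $G$, provided the red block used is not shared. Root each $\calT_D$ and run the standard greedy procedure: pick a deepest node $v$ whose subtree contains both a red and a blue node, and extract a model inside that subtree.
\begin{enumerate}
\item If $v$ is a cut vertex, delete $v$ and recurse.
\item If $v$ is a block $B$, the model can be routed through $B$. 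I would delete from $B$ the vertices of $B$ separating its children subtrees that contain red nodes from those that contain blue nodes. By minimality of $v$, these are cut vertices attached to $B$ that lie on the relevant side. To keep the cost $\Oh(1)$ per extracted model, I would use the fact that $B$ is $2$-connected: if at least two red-type children and at least two blue-type children hang on distinct vertices of $B$, then $B$ alone yields two disjoint models, so one can always charge the deleted vertices to a constant number of extracted models.
\end{enumerate}
This yields either $k$ disjoint models in total across all components, or a hitting set of size $\Oh(k)$ in $G-X$. Together with $X$ this gives $|Z|=\Oh(k\log k)$.

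\textbf{Main obstacle.} The delicate step is case (ii) above. The block $B$ may have many attachment points, and the red-carrying and blue-carrying children can interleave around it, so the naive hitting set is not constant-size. I would first try the charging argument via $2$-connectivity, which uses Menger's theorem inside $B$ to produce two disjoint red–blue connections whenever more than a bounded number of deletions would be needed. If that fails, the fallback is to apply the classical Erdős–Pósa theorem for cycles to the union of the red blocks whose subtrees contain $S$-vertices, after contracting each blue part to a single $S$-vertex.
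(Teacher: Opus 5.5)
Your reformulation and Step~1 are fine. In fact, after Step~1 no vertex of $S$ lies in a $2$-connected block at all, so the exceptional cut vertices you mention do not exist. Step~2, however, has a genuine gap, and it sits exactly where the difficulty of the theorem lies.

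\textbf{The $\Oh(k)$ claim is false.} You claim the block--cut tree greedy gives either $k$ disjoint models or a hitting set of size $\Oh(k)$ in $G-X$. Take a $2$-connected cubic graph $B$ on $n$ vertices with girth $\Omega(\log n)$, and attach a pendant vertex $s_v\in S$ to every $v\in V(B)$. There are no $S$-cycles, so $X=\emptyset$. Every model contains a cycle of $B$, so there are only $\Oh(n/\log n)$ disjoint models. Now suppose $Z$ hits all models. Every vertex in a component of $B-Z$ containing a cycle must have its pendant in $Z$. So $Z\cap V(B)$, together with the vertices of those components, is a feedback vertex set of $B$, which gives $|Z|\ge n/4$.

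The block--cut tree here has a single red block. Hence no scheme that charges $\Oh(1)$ deletions to each extracted model can work: case~(ii) carries the full $k\log k$ behaviour. Inside a red block $B$, let $A$ be the set of attachment vertices of its blue children. The models there are exactly $A$-lollipops in $B$ whose terminals now lie on cycles. That is the original problem again, so Step~1 buys nothing there.

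\textbf{The fallback fails too.} Classical Erd\H{o}s--P\'osa applied to the red blocks reaching $S$ may return many disjoint cycles that cannot be linked to $S$ simultaneously. The bottleneck can be a separator of size at least $2$ inside one block, which the block--cut tree does not see. For example, join a large $2$-connected cubic graph by two disjoint edges to vertices $x,y$ of a cycle carrying all the $S$-pendants. This is one block with arbitrarily many disjoint cycles, yet it has at most two disjoint models and hitting set $\{x,y\}$.

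\textbf{What is missing} is a global linkage argument, which is what the paper uses (with no $S$-cycle theorem at all):
\begin{enumerate}
\item Take a maximum family $\calC$ of disjoint cycles, and a maximal family $\calP$ of disjoint $V(\calC)$--$S$ paths ending on distinct cycles.
\item If fewer than $k$ cycles are linked, apply Menger between the unlinked cycles and $S$.
\item If there is a separator of size less than $\alpha(k)$, the $S$-side has fewer than $k$ disjoint cycles by maximality of $\calC$. Classical Erd\H{o}s--P\'osa there costs $\ep(k)$ more vertices.
\item Otherwise there are many disjoint linkages. Stop each at its first vertex on $\calC_1\cup\calP$ and continue along $\calP$. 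Together with $\calC_1\cup\calC_2$ this gives a graph with all degrees in $\{2,3\}$ and at least $\simono(2k-1)$ vertices of degree~$3$.
\item Simonovits's lemma then yields $2k-1$ disjoint cycles replacing at most $2k-2$ cycles of $\calC$, contradicting maximality.
\end{enumerate}
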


\Cref{thm:EP-triangles-in-one-vx} follows from~\Cref{thm:EP-lollipops} proved in~\Cref{sec:EP}.
Note that when $H$ is a forest, then the pathwidth part of~\Cref{thm:main}, namely \cref{thm:pw}, implies that one may take $f_{(H,T)}$ quadratic (see~\cite[Corollary~25]{quickly-excluding-apex-forest}).
Recently Claus, Joret, Rambaud, and Robinson~\cite{EProotedtrees} showed that in this case, $f_{(H,T)}$ can be linear.

\subsection{Minor-closed classes of annotated graphs}

One of the fundamental results of structural graph theory due to Robertson and Seymour is that all graphs are well-quasi ordered under the minor relation.
In other words, every minor-closed class of graphs has a finite list of minor-minimal obstructions.
In fact, the result of Robertson and Seymour~\cite{GMXXIII} is general enough so it can be also applied to annotated graphs, see also~\cite[Section 2.4]{ColorfulMinors}.
Following the approach that we took for graph parameters, every minor-closed class of graphs yields a minor-closed class of annotated graphs.
For every class of graphs $\calC$, let
\[
    \defin{\calC^\torsobullet} = \big\{(G,S) \ \big\vert\ \text{there exists $S'\subset V(G)$ with $S \subset S'$ such that $\torso_G(S') \in \calC$}\big\}.
\]
The following meta-question arises.
Given a class of graphs $\calC$ for which we know the minor-minimal obstructions, what are the minor-minimal obstructions of $\calC^\torsobullet$?
We solve the first nontrivial case where $\calC$ is the class of forests.

\begin{restatable}{theorem}{thmforests}\label{thm:obstructions-forests}
    Let $\calC$ be the class of all forests.
    For every annotated graph $(G,S)$, we have $(G,S) \in \calC^\torsobullet$ if and only if $K_3$ is not an $S$-rooted minor of $G$.
\end{restatable}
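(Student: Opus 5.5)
The plan is to prove the two directions separately. The forward direction is a general fact: an $S'$-rooted minor of $G$ is a minor of $\torso_G(S')$. The backward direction builds $S'$ explicitly from the block structure of $G$.

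\emph{Forward direction.} Suppose $\torso_G(S')$ is a forest for some $S'\supseteq S$, and suppose for contradiction that $(B_x \mid x\in V(K_3))$ is an $S$-rooted model of $K_3$ in $G$. Since $S\subseteq S'$, it is also $S'$-rooted. Put $B'_x=B_x\cap S'$ for each $x$; each $B'_x$ is nonempty.
\begin{enumerate}
\item \emph{Branch sets stay connected.} Take $u,v\in B'_x$ and a \stpath{u}{v} inside $G[B_x]$. Cut it at its vertices in $S'$. Each piece has interior outside $S'$, so it is an edge of $\torso_G(S')$. Hence $\torso_G(S')[B'_x]$ is connected.
\item \emph{Adjacencies survive.} Let $ab\in E(G)$ with $a\in B_x$ and $b\in B_y$. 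Walk inside $G[B_x]$ from $a$ to a nearest vertex $a'\in B'_x$, and inside $G[B_y]$ from $b$ to a nearest $b'\in B'_y$. Concatenating through the edge $ab$ gives an \stpath{a'}{b'} whose interior avoids $S'$, so $a'b'$ is an edge of the torso.
\end{enumerate}
Thus $(B'_x)$ is a model of $K_3$ in the forest $\torso_G(S')$, which is impossible.

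\emph{Backward direction.} Assume $K_3$ is not an $S$-rooted minor of $G$; we may treat each component separately. Work in the block-cut tree of $G$. For a block $D$, call $v\in V(D)$ an \emph{attachment} of $D$ if either $v\in S$, or $v$ is a cut vertex and the component of $G-(V(D)\setminus\{v\})$ containing $v$ meets $S$. The key claim is that every block has at most two attachments.

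To prove the claim, suppose $D$ has three attachments $a,b,c$. Each attachment is in $S$ or is joined to $S$ by a path outside $D$, and these three hanging parts are pairwise disjoint. So it suffices to find a model of $K_3$ in $D$ with $a,b,c$ in different branch sets; we then extend each branch set by its hanging part. If $D$ is a single edge, it has only two vertices, so $D$ is $2$-connected. Take a cycle $C$ through $a$ and $b$. If $c\in C$, split $C$ into three arcs containing $a$, $b$, $c$ respectively. Otherwise use the fan lemma to get two \stpaths{c}{C} meeting only at $c$ and ending at distinct $p,q\in C$. Choose the three branch sets as follows:
\begin{enumerate}
\item a set containing $c$ together with the two fan paths minus $p,q$;
\item and the two pieces of $C$ obtained by cutting $C$ into two arcs, one containing $a$ and one containing $b$.
\end{enumerate}
The main obstacle is the case analysis on how $p,q$ sit relative to $a,b$ on $C$. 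If $p,q$ lie on the same $a$--$b$ arc, one should reroute. A cleaner option is to apply Menger to get two internally disjoint paths from $c$ to $\{a,b\}$, avoiding a third path. I expect this local argument to be the only fiddly part.

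\emph{Defining $S'$.} Let $S'$ be the union of all attachments of all blocks; clearly $S\subseteq S'$. Consider a torso edge $uv$ witnessed by a path $P$ with interior outside $S'$. We show $u$ and $v$ are the two attachments of one block.
\begin{enumerate}
\item If $P$ enters a pendant part of a block $D$ through a non-attachment cut vertex $w$, that part contains no $S'$-vertex other than possibly $w$ itself. Indeed, no vertex of $S$ lies there, and by definition no deeper attachments lie there either. So $P$ must return through $w$, contradicting that $P$ is a path.
\item Hence $P$ stays within one block $D$, and $u,v$ are its two attachments.
\end{enumerate}
Therefore every torso edge corresponds to a block with exactly two attachments, and distinct such blocks give distinct edges, with no multi-edges. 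A cycle in $\torso_G(S')$ would then yield a cycle in the block-cut tree, so $\torso_G(S')$ is a forest and $(G,S)\in\calC^\torsobullet$.
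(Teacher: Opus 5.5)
\textbf{There is a genuine gap: your choice of $S'$ is too large, so the backward direction fails as written.} Your forward direction is fine; it is exactly \cref{obs:projecting-S-rooted-minors-in-torso}. Your key claim that every block has at most two attachments is also correct. The case you flag as the obstacle is in fact harmless: for any distinct $p,q\in V(C)$ you can always cut $C$ into two arcs, one containing $a$ and the other $b$, with $p$ and $q$ in different arcs, so no rerouting is needed.

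The problem is taking $S'$ to be the union of the attachments of \emph{all} blocks. Let $w\notin S$ be a cut vertex, fix $s\in S$, and let $K$ be the component of $G-w$ containing $s$. Then $w$ is an attachment of every block $D'\ni w$ with $D'-w\not\subseteq K$, and such a block exists. So for connected $G$ with $S\neq\emptyset$, your $S'$ is simply $S$ together with all cut vertices. Concretely, let $G$ be the triangle $uvw$ with pendant edges $us_1$, $vs_2$, $wz$, and let $S=\{s_1,s_2\}$:
\begin{itemize}
\item there is no $S$-rooted $K_3$, since $|S|=2$;
\item the triangle block has exactly the two attachments $u,v$;
\item but $w$ is an attachment of the block $wz$, so $w\in S'$, and $\torso_G(S')$ contains the triangle $uvw$.
\end{itemize}
The faulty inference is your step~2. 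Step~1 (or simply the fact that all cut vertices lie in $S'$) does show that $P$ lies in a single block $D$. Nothing forces its endpoints to be attachments \emph{of $D$}, though; they may lie in $S'$ only as attachments of some other block.

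\textbf{Repair.} Keep only vertices that are attachments of every block containing them:
$S'=S\cup\{w \mid w \text{ is a cut vertex and } S \text{ meets at least two components of } G-w\}$.
\begin{itemize}
\item For each block $D$, $S'\cap V(D)$ lies inside the attachment set of $D$, so it has at most two vertices.
\item Suppose $P$ switched blocks at an interior vertex $w\notin S'$. Then one of the two components of $G-w$ used by $P$ contains no vertex of $S$. It therefore contains no vertex of $S'$: every cut vertex $x$ in it has all components of $G-x$ but one inside that $S$-free part. So $P$ would have to return through $w$, which is impossible.
\end{itemize}
Hence each torso edge lies in a single block and each block yields at most one torso edge, and your block-cut tree argument then finishes the proof.

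\textbf{Comparison with the paper.} With this repair your route is genuinely different from the paper's. The paper takes a minimal connected subgraph $T\supseteq S$ (a Steiner tree) and sets $S'=S\cup\{\text{vertices of degree at least } 3 \text{ in } T\}$. It then shows that $\torso_G(S')$ is $T$ with its degree-$2$ non-$S$ vertices suppressed. The key step is that for $u\in S'$, distinct components of $T-u$ lie in distinct components of $G-u$.
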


\Cref{thm:obstructions-forests} is proved in~\Cref{ssec:fv}.
It would be interesting to establish a similar theorem for $\calC$, being all the planar graphs, i.e.\ an \emph{annotated Kuratowski theorem}, see \cref{annotated-Kuratowski}.

\subsection{Other related results}
Annotated graphs are of neuralgic importance in recent results in the Graph Minors literature. 
The central notion behind several of these developments is the one of \emph{bidimensionality} of a vertex set $S$ in a graph $G$, i.e.\ the maximum integer $\ell$ such that the $\ell\times\ell$ grid is contained as an $S$-rooted minor in $G$.
Bidimensionality was used to rephrase the graph minors structure theorem~\cite{Bidimensionality,GorskyPS26} but also exhibits the optimality of the improved bounds for the unique linkage theorem~\cite{CavallaroGKTWOptimalBounds}.
A variant of bidimensionality provides the framework for showing a dichotomy result for the applicability of the irrelevant vertex technique~\cite{ThilikosW26irr}. 
Bidimensionality has also been studied from the lens of logic, enabling algorithmic meta-theorems for corresponding extensions of first-order logic~\cite{SST25,SauSSSTV26}.

Annotated graphs can be seen as a special instance of the more general notion of \emph{colorful graphs}, graphs equipped with a fixed number of subsets of vertices, which were introduced by Protopapas, Thilikos, and Wiederrecht~\cite{ColorfulMinors}. 
Furthermore, they~\cite{PTW26} characterized which models of colorful graphs admit Erdős–Pósa property.
See also related work of Kwon and Marx~\cite{EP-rooted}.







\section{Preliminaries}\label{sec:preliminaries}


For each positive integer $k$, we write $\defin{\text{$[k]$}}=\{1,\ldots,k\}$, and $\defin{\text{$[0]$}} = \emptyset$.
All graphs considered in this paper are finite, simple, and undirected. 
The \defin{null graph} is the graph with no vertices.  
Connected graphs are nonnull.
Trees are defined as connected forests, thus trees and subtrees are also assumed to be nonnull.
Given a graph $G$, we denote by \defin{$V(G)$} its vertex set and by \defin{$E(G)$} its edge set.
A \defin{graph parameter} is a function mapping each graph to a real number.
A graph parameter $\param$ is \defin{minor-monotone} if for all graphs $H$ and $G$, the fact that $H$ is a minor of $G$ implies $\param(H) \leq \param(G)$.
Two subgraphs of a graph are \defin{disjoint} if they are vertex-disjoint. 

Let $G$ and $H$ be graphs. We denote by \defin{$G\cup H$} (resp. \defin{$G\cap H$}) the graph with vertex set $V(G) \cup V(H)$ (resp. $V(G)\cap V(H)$) and edge set $E(G)\cup E(H)$ (resp. $E(G)\cap E(H)$).
Given two subgraphs $A$ and $B$ of $G$, we say that $(A,B)$ is a \defin{separation} of $G$ if $A\cup B= G$.
We call $|V(A)\cap V(B)|$ the \defin{order} of the separation $(A,B)$.
Given $X\subseteq V(G)$, we use \defin{$G - X$} to denote the graph obtained from $G$ by removing the vertices of $X$ and all their incident edges.
For a collection $\calH$ of subgraphs of a graph $G$, we use \defin{$V(\mathcal{H})$} to denote the set $\bigcup\{V(H) \mid H\in\calH\}$.
We say that $H$ is a \defin{subdivision} of $G$ if it can be obtained from $G$ by replacing each edge by a path of length\footnote{The \defin{length} of a path is the number of its edges.} at least one.

For all $S,T \subset V(G)$, an \defin{\stpath{S}{T}} is a path in $G$ that is either 
a one-vertex path with a vertex in $S\cap T$, or 
a path with one endpoint in $S \setminus T$ and the other endpoint in $T\setminus S$ such that no internal vertices are in $S \cup T$. 
When $S$ or $T$ is a singleton, say $S=\{u\}$, we simplify the notation to \defin{\stpath{u}{T}} and analogous variants.


We state and prove several simple observations.

\begin{obs}\label{obs:projecting-S-rooted-minors-in-torso}
    Let $(G,S)$ be an annotated graph, let $H$ be a graph, and let $S'\subset V(G)$ with $S \subset S'$.
    If $H$ is an $S$-rooted minor of $G$, then $H$ is a minor of $\torso_G(S')$.
\end{obs}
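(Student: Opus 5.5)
We start from an $S$-rooted model $(B_x \mid x \in V(H))$ of $H$ in $G$ and build a model $(B'_x \mid x \in V(H))$ of $H$ in $\torso_G(S')$ by projecting each branch set onto $S'$. That is, we set $B'_x = B_x \cap S'$. Since the model is $S$-rooted, every $B_x$ contains a vertex of $S \subseteq S'$, so each $B'_x$ is nonempty. The sets $B'_x$ are pairwise disjoint because the $B_x$ are.

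The key tool is the following projection fact about $\torso_G(S')$. Let $W$ be a walk (or path) in $G$ whose endpoints lie in $S'$. List the vertices of $W$ that lie in $S'$ in the order they occur. Then any two consecutive vertices in this list are either equal or adjacent in $\torso_G(S')$: the subwalk between them has all internal vertices outside $S'$, and it contains a path with the same property. Hence the $S'$-vertices of $W$ form a walk in $\torso_G(S')$.

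\emph{Connectivity.} Fix $x$ and take $u,v \in B'_x$. Since $G[B_x]$ is connected, there is a $u$--$v$ path in $G[B_x]$. By the projection fact, its $S'$-vertices form a $u$--$v$ walk in $\torso_G(S')$. All these vertices lie in $B_x \cap S' = B'_x$, so $\torso_G(S')[B'_x]$ is connected.

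\emph{Edges.} Let $xy \in E(H)$. Choose $s \in B_x \cap S$ and $t \in B_y \cap S$. Concatenating a path inside $G[B_x]$ from $s$, an edge of $G$ between $B_x$ and $B_y$, and a path inside $G[B_y]$ to $t$ gives an $s$--$t$ walk $W$ in $G[B_x \cup B_y]$. Its $S'$-vertices form a walk in $\torso_G(S')$ that starts in $B'_x$, ends in $B'_y$, and stays inside $B'_x \cup B'_y$. Such a walk must contain a consecutive pair with one vertex in $B'_x$ and the other in $B'_y$. These two vertices are distinct, so by the projection fact they are adjacent in $\torso_G(S')$.

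These three properties show that $(B'_x \mid x \in V(H))$ is a model of $H$ in $\torso_G(S')$. There is no real obstacle here. The only point needing care is that each branch set keeps at least one vertex after intersecting with $S'$, and this is exactly where the rootedness in $S \subseteq S'$ is used.
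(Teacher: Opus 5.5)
Your proof is correct and follows essentially the same route as the paper: project each branch set to $B_x \cap S'$, use rootedness for nonemptiness, and observe that the $S'$-vertices of a path or walk in $G$ form a walk in $\torso_G(S')$, which gives both connectivity and adjacency. Your explicit argument in the edge step, locating a consecutive pair that switches from $B'_x$ to $B'_y$, spells out a detail the paper leaves implicit.
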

\begin{proof}
    Suppose that $\big( B_x \mid x \in V(H)\big)$ is an $S$-rooted model of $H$ in $(G,S)$.
    For every $x \in V(H)$, let $B_x' = B_x \cap S'$.
    We claim that $\big( B_x' \mid x \in V(H)\big)$ is a model of $H$ in $G' = \torso_G(S')$.
    Let $x,y \in V(H)$.
    First, $B'_x \neq \emptyset$ as $B_x \cap S \neq \emptyset$. 
    Next, $G'[B'_x]$ is connected since $G[B_x]$ is connected and for every $u,v\in B'_x$ the restriction to $G'[B'_x]$ of a $u$-$v$ path in $G[B_x]$, is a $u$-$v$ path in $G'[B'_x]$. 
    Finally, if $B_x$ and $B_y$ are adjacent in $G$, for $u\in B'_x$ and $v\in B'_y$ there is a $u$-$v$ path in $G[B_x \cup B_y]$, hence $B'_x$ and $B'_y$ are adjacent in $G'$.
\end{proof}

The key implication of \cref{obs:projecting-S-rooted-minors-in-torso} is the following statement.

\begin{obs}\label{obs:transfer-large-value}
    Let $\param$ be a minor-monotone graph parameter.
    Let $H$ be a graph and $n$ be an integer such that $\param(H) \geq n$.
    Then, for every annotated graph $(G,S)$ such that $G$ contains an $S$-rooted minor of $H$, we have $\param(G,S) \geq n$.
\end{obs}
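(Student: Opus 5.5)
The plan is to combine \cref{obs:projecting-S-rooted-minors-in-torso} with minor-monotonicity of $\param$, applied to every admissible superset $S'$ appearing in the definition of $\param(G,S)$. Fix an annotated graph $(G,S)$ such that $H$ is an $S$-rooted minor of $G$. Recall that
\[
    \param(G,S) = \min\{\param(\torso_G(S')) \mid S \subset S' \subset V(G)\},
\]
so it suffices to show that $\param(\torso_G(S')) \geq n$ for every $S'$ with $S \subset S' \subset V(G)$. The minimum is taken over a finite nonempty family, since $S' = V(G)$ is always admissible, so it is attained and the bound passes to it directly.

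Now fix such an $S'$. By \cref{obs:projecting-S-rooted-minors-in-torso}, since $H$ is an $S$-rooted minor of $G$ and $S \subset S'$, the graph $H$ is a minor of $\torso_G(S')$. Since $\param$ is minor-monotone, $\param(\torso_G(S')) \geq \param(H) \geq n$. Taking the minimum over all admissible $S'$ yields $\param(G,S) \geq n$.

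There is no real obstacle; all the work is already in \cref{obs:projecting-S-rooted-minors-in-torso}. The only points to check are that the definition quantifies over \emph{all} supersets $S'$ of $S$, so the observation applies to each one uniformly, and that the minimum is over a nonempty set.
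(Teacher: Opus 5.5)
Your proposal is correct and is exactly the argument the paper intends: the paper states this observation without proof as the direct consequence of \cref{obs:projecting-S-rooted-minors-in-torso}. Applying that observation to every admissible $S'$ and then using minor-monotonicity is precisely the one-line justification behind it.
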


\begin{obs}\label{obs:monotonicity}
    Let $\param$ be a minor-monotone graph parameter.
    Then, for all annotated graphs $(H,T)$ and $(G,S)$, if $(H,T)$ is a minor of $(G,S)$, then $\param(H,T) \leq \param(G,S)$.
\end{obs}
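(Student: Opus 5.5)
The plan is to show that a model of $(H,T)$ in $(G,S)$ can be transported into $\torso_{G}(S')$ for every admissible $S'$. Fix a model $(B_x \mid x \in V(H))$ of $(H,T)$ in $(G,S)$. Let $S^\ast \subseteq V(G)$ with $S \subseteq S^\ast$ be a set attaining $\param(G,S) = \param(\torso_G(S^\ast))$. It suffices to produce a set $T^\ast$ with $T \subseteq T^\ast \subseteq V(H)$ such that $\torso_H(T^\ast)$ is a minor of $\torso_G(S^\ast)$. Minor-monotonicity then gives
\[\param(H,T) \le \param(\torso_H(T^\ast)) \le \param(\torso_G(S^\ast)) = \param(G,S).\]

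The natural choice is $T^\ast = \{x \in V(H) \mid B_x \cap S^\ast \neq \emptyset\}$. This contains $T$, because every $B_x$ with $x \in T$ meets $S \subseteq S^\ast$. For $x \in T^\ast$ put $B'_x = B_x \cap S^\ast$; these sets are nonempty and pairwise disjoint. I then check the two model conditions for $\torso_H(T^\ast)$ in $\torso_G(S^\ast)$.

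Connectivity of each $B'_x$ in $\torso_G(S^\ast)$ follows exactly as in \cref{obs:projecting-S-rooted-minors-in-torso}. Take a path inside $G[B_x]$ between two vertices of $B'_x$. Consecutive $S^\ast$-vertices on it are joined by a subpath whose internal vertices avoid $S^\ast$, so they are adjacent in the torso.

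For adjacency, let $xy$ be an edge of $\torso_H(T^\ast)$. Then there is an $x$--$y$ path $x=z_0,z_1,\dots,z_m=y$ in $H$ whose internal vertices lie outside $T^\ast$, so $B_{z_i} \cap S^\ast = \emptyset$ for $0<i<m$. Concatenating connections through the branch sets along this path gives a walk in $G$ from some $u \in B'_x$ to some $v \in B'_y$. Each step uses a path within a branch set followed by an edge realising $z_iz_{i+1}$. All vertices of the walk in the internal branch sets avoid $S^\ast$.

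Now take the last $S^\ast$-vertex $u'$ of this walk lying in $B_x$, and the first $S^\ast$-vertex $v'$ after it, which lies in $B_y$. The segment between $u'$ and $v'$ has its interior outside $S^\ast$. It therefore contains a $u'$--$v'$ path with interior outside $S^\ast$, so $u'v'$ is an edge of the torso between $B'_x$ and $B'_y$.

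The only delicate step is this adjacency argument. One has to make sure the first $S^\ast$-vertex after $u'$ really lies in $B_y$ and not back in $B_x$. Choosing $u'$ as the last $S^\ast$-vertex of $B_x$ on the walk settles this. One also extracts a path from the walk, which is routine. The argument uses nothing about $\param$ beyond minor-monotonicity.
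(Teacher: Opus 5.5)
Your proof is correct and is the same as the paper's. The paper also takes $T'=\{x \mid B_x\cap S'\neq\emptyset\}$ and shows that $(B_x\cap S' \mid x\in T')$ is a model of $\torso_H(T')$ in $\torso_G(S')$, but it only asserts that the adjacency step is analogous to \cref{obs:projecting-S-rooted-minors-in-torso}; your walk through the internal branch sets, with $u'$ chosen as the last $S^\ast$-vertex of $B_x$ on it, correctly supplies that omitted detail.
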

\begin{proof}
Let $(B_x\mid x\in V(H))$ be a model of $(H,T)$ in $(G,S)$.
Choose $S\subseteq S'\subseteq V(G)$ such that
$\param(\operatorname{torso}_G(S'))=\param(G,S)$, and put $T'=\{x\in V(H):B_x\cap S'\ne\emptyset\}$.
Since $S\subseteq S'$, we have $T\subseteq T'$.
The sets $(B_x\cap S'\mid x\in T')$ form a model of
$\torso_H(T')$ in $\torso_G(S')$.
This can be proved with a similar argument as in  \cref{obs:projecting-S-rooted-minors-in-torso}.
Therefore, by the minor-monotonicity of $\param$,
\[
    \param(H,T)
    \le \param(\operatorname{torso}_H(T'))
    \le \param(\operatorname{torso}_G(S'))
    =\param(G,S). \qedhere
\]
\end{proof}


\section{Annotated obstructions for torso parameters}\label{sec:parameters}


In this section, we prove~\Cref{thm:main} and some other complementary results.
Given a graph parameter $\param$, 
to obtain a statement as in~\Cref{thm:main}, one needs two directions.
First, one must prove that $\param$ can be arbitrarily large on graphs in $\calF_\param$ and second, one must prove that if $\param(G,S)$ is sufficiently large, then $(G,S)$ contains as a minor a \q{large} member of $\calF_p$.
For some of the parameters, we provide an equivalent definition, which is supposed to be more comprehensive than the abstract torso definition.

\subsection{Treewidth}

A \defin{tree decomposition} of a graph $G$ is a pair $\mathcal{T} = \big(T,(W_x \mid x \in V(T))\big)$,
where $T$ is a tree and $W_x \subseteq V(G)$ for every $x \in V(T)$, satisfying the following conditions:
\begin{enumerate}
    \item for every $u \in V(G)$, the graph $T[\{x \in V(T) \mid u \in W_x\}]$ is a connected subgraph of $T$ and
    \item for every edge $uv \in E(G)$, there exists $x \in V(T)$ such that $u,v \in W_x$.
\end{enumerate}
The sets $W_x$ are called the \defin{bags} of $\mathcal{T}$.
The \defin{width} of $\mathcal{T}$ is $\max\{|W_x|-1 \mid x\in V(T)\}$.
The \defin{treewidth} of a graph $G$, denoted by \defin{$\tw(G)$}, is the minimum width of a tree decomposition of $G$.

Since for any graph $G$ and $X\subseteq V(G)$, 
the neighborhood in $X$ of a component of $G-X$ is a clique in $\torso_G(X)$,
we obtain the following observation.
\begin{obs}\label{obs:tw-decomposition}
    For every annotated graph $(G,S)$ and for every positive integer $k$, we have $\tw(G,S) \leq k$ if and only if there exists a set $S' \subset V(G)$ with $S \subset S'$ such that there exists a tree decomposition $\big(T,(W_x \mid x \in V(T))\big)$ of $G[S']$ of width at most $k$ such that for every component of $G - S'$ there exists $x \in V(T)$ with $W_x$ containing the neighborhood of the component in $G$.
\end{obs}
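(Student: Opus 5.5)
The proof establishes both directions of the equivalence separately, using the fact that the treewidth of a graph is at most $k$ if and only if it is a subgraph of a chordal graph with clique number at most $k+1$. Equivalently, $\tw(H)\le k$ if and only if $H$ admits a tree decomposition of width at most $k$. The two things to reconcile are edges of $\torso_G(S')$ and cliques formed by neighborhoods of components of $G-S'$.

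For the forward direction, suppose $\tw(G,S)\le k$. Fix $S'\supseteq S$ with $\tw(\torso_G(S'))\le k$, and a tree decomposition $\big(T,(W_x)\big)$ of $\torso_G(S')$ of width at most $k$.
\begin{enumerate}
    \item Since $G[S']\subseteq \torso_G(S')$ and both have vertex set $S'$, the same decomposition is a tree decomposition of $G[S']$ of width at most $k$.
    \item Let $C$ be a component of $G-S'$ and take distinct $u,v\in N_G(C)\subseteq S'$. Pick neighbours $u'$ of $u$ and $v'$ of $v$ in $C$, and a $u'$--$v'$ path inside $C$. Together these give a $u$--$v$ path in $G$ whose internal vertices avoid $S'$, so $uv\in E(\torso_G(S'))$.
    \item Hence $N_G(C)$ is a clique in $\torso_G(S')$. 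By the Helly property of subtrees of a tree, some bag $W_x$ contains all of $N_G(C)$.
\end{enumerate}

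For the converse, suppose $S'\supseteq S$ and a decomposition of $G[S']$ are given as in the statement. We show that the same decomposition is a tree decomposition of $\torso_G(S')$. Its vertex set is already correct, so it suffices to cover every edge $uv$ of the torso by some bag.
\begin{enumerate}
    \item If the witnessing $u$--$v$ path has no internal vertices, then $uv\in E(G[S'])$ and it is covered by assumption.
    \item Otherwise, all internal vertices lie outside $S'$ and form a connected set, so they lie in a single component $C$ of $G-S'$.
    \item The endpoints $u$ and $v$ are then both in $N_G(C)$. By hypothesis, some bag $W_x$ contains $N_G(C)$, so $uv$ is covered.
\end{enumerate}
The connectivity condition on the subtrees indexed by each vertex is untouched, so we obtain a tree decomposition of $\torso_G(S')$ of width at most $k$. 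Therefore $\tw(G,S)\le k$.

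The only step needing an external fact is the use of the Helly property for subtrees, namely that every clique of a graph is contained in some bag of any tree decomposition of it. This is standard and I would simply cite it. Everything else is a direct unwinding of the definition of $\torso_G(S')$, so I expect no real obstacle.
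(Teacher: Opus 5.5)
Your proof is correct and takes the same approach as the paper. The paper justifies the observation in one line: the neighborhood of each component of $G-S'$ is a clique in $\torso_G(S')$. You have written out both directions, using the Helly property for the forward one and, for the converse, the fact that each torso edge with internal vertices is realized through a single component. Your opening mention of chordal graphs is never used and can be dropped.
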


We call a tree decomposition $\big(T, (W_x \mid x \in V(T))\big)$
as in the observation a tree decomposition of $(G,S)$.

Let \defin{$\gm$} be the minimum function such that for every positive integer $\ell$, if a graph $G$ has no model of the $\ell \times \ell$ grid, then $\tw(G) \leq \gm(\ell)$.
This function exists due to the Grid Minor Theorem, and the best known upper bound is $\gm(\ell) =\widetilde{\Oh}(\ell^{8})$ due to Bonnet~\cite{Bonnet} and Chuzhoy and Tan~\cite{Chuzhoy2021}. The following result was given by Hodor, La, Micek, and Rambaud~\cite{quickly-excluding-apex-forest}.

\begin{theorem}[{\cite[Theorem~8\footnote{Theorem~8 in the arXiv version, renumbered Theorem~3.3 in the published version.}]{quickly-excluding-apex-forest}}]\label{thm:rooted:gridminor}
    There is a function $f_{\ref{thm:rooted:gridminor}}\colon\mathbb{N}\to\mathbb{N}$ such that the following holds.
    Let $H$ be a plane graph and let $T$ be a subset of the outer face of $H$.
    For every annotated graph $(G,S)$, if $\tw(G,S) \geq f_{\ref{thm:rooted:gridminor}}(|V(H)|)$, then $(G,S)$ contains a model of $(H,T)$. Moreover, we can take $f_{\ref{thm:rooted:gridminor}}(x)= 3\gm(98304 \cdot x^4)+2$.
    %
\end{theorem}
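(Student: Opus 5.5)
The plan is to reduce to the ordinary Grid Minor Theorem through the torso, in three steps: first find a large grid in a torso, then root that grid on $S$, and finally embed $(H,T)$ into the rooted grid.

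\emph{Step 1: a grid in a carefully chosen torso.} Let $\ell$ be a parameter polynomial in $|V(H)|$, to be fixed in Step 3. Assume $\tw(G,S) \geq 3\gm(\ell)+2$. Choose $S' \supseteq S$ minimizing a suitable potential, for instance $|S'|$ among supersets whose torso has treewidth less than $\tw(G,S)+1$; this is only a normalisation. Then $\tw(\torso_G(S)) \geq \tw(G,S)$, so the Grid Minor Theorem gives an $\ell\times\ell$ grid model in $\torso_G(S)$. The real task is to transfer this model into $G$ with roots in $S$. Each edge of the torso is realised by a path in $G$ whose interior avoids $S$. However, different torso edges may reuse the same component of $G-S$, so the model does not lift directly.

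\emph{Step 2: rooting via a balanced-separator argument.} I expect this to be the main obstacle, and it explains the factor $3$ and the additive $2$. I would work directly with $S$-separations instead of lifting the model. Suppose $G$ has no $S$-rooted $\ell'\times\ell'$ grid minor, where $\ell'$ is polynomial in $\ell$. Then I would build a tree decomposition of $(G,S)$ in the sense of \Cref{obs:tw-decomposition}, using the standard ``well-linked set'' recursion as in the proof that large treewidth yields a large bramble.

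The recursion proceeds on pieces whose interface $W$ has size at most $2k$. If $W\cup(\text{piece}\cap S)$ is not well-linked in $G$ (separations of order at most $k$ counted with respect to $S$-vertices), split along the separation, keeping interfaces small. The only way this can fail is if there is a set $W$ of size about $3k$ that is well-linked in $G$.

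A large well-linked set in $G$ whose elements lie in $S$, or are attached to $S$ by disjoint paths, yields a bramble, and hence a treewidth-$k$ witness. By the Grid Minor Theorem applied in the right auxiliary graph, one extracts an $\ell'\times\ell'$ grid model such that many branch sets along a large subgrid meet $S$. Concretely, I would take the well-linked set inside $S$ and use Menger to route a grid of paths to it. Here I would lean on the known fact that a grid linked to a large well-linked set $X$ contains an $X$-rooted grid of polynomially smaller size. This is presumably where the $98304\,x^4$ comes from.

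\emph{Step 3: embedding $(H,T)$.} Next, use a large grid in which all boundary vertices are $S$-rooted; it suffices that the outer cycle branch sets meet $S$. One gets this from an $S$-rooted grid of size $\ell'$ by rerouting, passing to a subgrid whose boundary vertices are all rooted, losing a quadratic factor. Then embed the plane graph $H$ into a grid of side $O(|V(H)|^2)$ (or $O(|V(H)|)$ after subdividing) so that the outer face of $H$ maps to the grid boundary. Each vertex of $T$ lies on the outer face, so it can be mapped onto branch sets meeting the boundary, and hence meeting $S$. Contracting gives a model of $(H,T)$ in $(G,S)$. Tracking the polynomial losses gives $\ell = O(|V(H)|^4)$, consistent with the stated bound $3\gm(98304\cdot |V(H)|^4)+2$.
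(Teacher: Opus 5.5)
\textbf{There is a genuine gap: your Step~2 relies on a claim that is false.} Large $\tw(G,S)$ does not force a large $S$-rooted grid, meaning one in which every branch set meets $S$. The ``known fact'' that a grid linked to a large well-linked set $X$ contains a large $X$-rooted grid fails for the same reason.

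Take $G$ to be the $n\times n$ grid and $S$ its first row. The paper shows $\tw(G,S)\ge n-1$, and $S$ is well-linked in $G$. But $S$ lies on the outer face, so adding a new vertex adjacent to all of $S$ keeps the graph planar. Hence every $S$-rooted minor of $G$ is outerplanar, and $G$ has no $S$-rooted $3\times 3$ grid.

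So the contrapositive you set up in Step~2 (no $S$-rooted $\ell'\times\ell'$ grid implies a small-width tree decomposition of $(G,S)$) cannot be proved. Step~3, which starts from such a grid and passes to a subgrid, then has nothing to start from. Step~1 is also idle: neither the grid found in $\torso_G(S)$ nor the choice of $S'$ is used later.

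The object you actually need is the one you name in Step~3: a large grid model in which only the branch sets of the boundary vertices meet $S$. This is the boundary-rooted grid shown for $\tw$ in \cref{tab:table}, and by the example above it is the best one can hope for. Producing it is the whole content of the theorem, and it is where the cited proof does its work, via tangles:
\begin{enumerate}
\item Large $\tw(G,S)$ gives a tangle in $G$ of order linear in $\tw(G,S)$, from which $S$ cannot be cut off by a separation of small order.
\item The rooted-grid argument of Marx, Seymour, and Wollan, applied to this tangle and $S$, yields a large grid controlled by the tangle whose boundary is rooted in $S$.
\item $(H,T)$ is then a minor of that grid, with $T$ sent to the boundary.
\end{enumerate}
Your well-linked-set recursion could plausibly replace the first step, provided you prove that the obstruction it produces cannot be separated from $S$ by fewer than $k$ vertices. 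For the second step, however, you substitute an incorrect black box. The bound $98304\,x^4$ is guessed rather than derived.
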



\Cref{thm:rooted:gridminor} is proved via tangles of annotated graphs.
The main technical part of the proof was stated implicitly by Marx, Seymour, and Wollan \cite[statement~1.3]{Marx2017}.
A complete proof with all the necessary adjustments to the annotated setting can be found in the appendix of~\cite{quickly-excluding-apex-forest}.

In the further parts of the paper, we will use the following Helly-like property of annotated graphs of bounded torso treewidth.
The statement is an extension of \cite[Lemma~23]{quickly-excluding-apex-forest} that follows directly from the proof.
Given a tree decomposition $\calT = \big(T,(W_x \mid x \in V(T))\big)$ of 
an annotated graph $(G,S)$ and a subgraph $H$ of $G$ intersecting $S$, 
we define $\defin{\calT[H]} = \{x \in V(T) \mid W_x \cap V(H) \neq \emptyset\}$.
By the properties of tree decompositions, if $H$ is connected, 
then $\mathcal{T}[H]$ induces a connected subtree of $T$.

\begin{lemma}[Extension of {\cite[Lemma 23]{quickly-excluding-apex-forest}}]\label{lem:tw:erdos:posa}
  Let $(G,S)$ be an annotated graph, let $S' \subset V(G)$ with $S \subset S'$, let $\big(T,(W_x \mid x \in V(T))\big)$ be a tree decomposition of $(G,S')$, and let $\calH$ be a family of connected subgraphs of $G$ each containing a vertex of $S$. For every positive integer $k$, either
  \begin{enumerate}
        \item there are $k$ subgraphs in $\calH$ so that each two $H$ and $H'$ satisfy $\calT[H] \cap \calT[H'] = \emptyset$ or 
        \item there is a set $X \subset V(T)$ with $|X| \leq k-1$ such that if $Z = \bigcup_{x \in X} W_x$, then $V(H) \cap Z \neq \emptyset$ for every $H \in \calH$.
    \end{enumerate}
\end{lemma}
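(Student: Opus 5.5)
The plan is the classical Helly property for subtrees of a tree, applied to the subtrees $\calT[H]$ for $H \in \calH$. Each $H \in \calH$ is connected and contains a vertex of $S \subset S'$, so it meets some bag. Hence $\calT[H]$ is nonempty, and since $\calT$ is a tree decomposition of $G[S']$ whose bags also cover the neighbourhoods of components of $G - S'$, it induces a subtree of $T$. It is worth checking this carefully, since $H$ may leave $S'$. The vertices of $H$ outside $S'$ lie in components of $G-S'$. Whenever $H$ passes through such a component $C$, it enters and leaves through vertices of $N(C) \subset S'$, which all lie in a common bag. So the set of nodes whose bags meet $V(H)\cap S'$ is connected in $T$. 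Here I interpret $\calT[H]$ as $\{x \mid W_x \cap V(H) \neq \emptyset\}$ with bags being subsets of $S'$, so only $V(H)\cap S'$ matters. Note that $V(H)\cap S'\neq\emptyset$ because $H$ meets $S$.

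Next I will prove the standard dichotomy by induction on $k$. For a family $\calF$ of subtrees of a tree $T$, either there are $k$ pairwise disjoint members, or there is a set $X$ of at most $k-1$ nodes hitting every member. The case $k=1$ is trivial when $\calF$ is empty, and vacuous otherwise. For the inductive step, root $T$ at an arbitrary node $r$. For each subtree $\calT[H]$, let $\mathrm{top}(H)$ be its node closest to $r$. Choose $H_0$ whose top $x_0$ is deepest, i.e.\ at maximum distance from $r$. Every subtree containing $x_0$ is hit by $x_0$. Every subtree $\calT[H]$ not containing $x_0$ is disjoint from $\calT[H_0]$: otherwise, since $\calT[H_0]$ lies in the subtree rooted at $x_0$, the top of $\calT[H]$ would lie strictly below $x_0$, contradicting the choice of $x_0$.

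Apply induction with $k-1$ to the family $\calH_1$ of those $H$ with $x_0 \notin \calT[H]$. If we get $k-1$ members with pairwise disjoint subtrees, adding $H_0$ gives $k$. Otherwise there is a hitting set $X_1$ with $|X_1| \le k-2$, and $X = X_1 \cup \{x_0\}$ hits all subtrees, with $|X|\le k-1$.

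Finally, translate back. If $x \in X$ lies in $\calT[H]$, then $W_x \cap V(H) \neq \emptyset$, so $Z=\bigcup_{x\in X}W_x$ meets every $H$. Disjointness of $\calT[H]$ and $\calT[H']$ is exactly outcome (i). The only delicate step is the connectivity of $\calT[H]$ in the torso-decomposition setting discussed above; the rest is routine.
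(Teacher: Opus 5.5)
Your proof is correct and is the argument the paper intends. The paper gives no separate proof, only noting that the lemma follows from the proof of the cited Lemma~23. That proof is precisely what you do: show that each $\calT[H]$ is a subtree of $T$, using that the neighbourhood of each component outside the decomposed set lies in a common bag, and then apply the Helly-type dichotomy for subtrees of a tree. One cosmetic point: in the paper's terminology a tree decomposition of $(G,S')$ decomposes $G[S'']$ for some $S''\supseteq S'$, and your connectivity argument goes through verbatim with $S''$ in place of $S'$.
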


A consequence of \cref{lem:tw:erdos:posa} is the following:
let $G$ be the $n \times n$ grid, and let $S$ be the vertex set of the first row of this grid.
If $\mathcal{H}$ consists of all the subgraphs of $G$ induced by
the union of a row and a column of $G$,
then \cref{lem:tw:erdos:posa} for $k=2$ implies that
there is a set $Z$ of $\tw(G,S)+1$ vertices intersecting
every member of $ \mathcal{H}$,
and we deduce $\tw(G,S) \geq n-1$.

This and \Cref{thm:rooted:gridminor} yields the treewidth statement in \cref{thm:main}.

\subsection{Pathwidth}
A \defin{path decomposition} of a graph $G$ is a tree decomposition 
$\big(T,(W_x\mid x\in V(T))\big)$ of $G$ where $T$ is a path.
The \defin{pathwidth} of a graph $G$, denoted by \defin{$\pw(G)$},
is the minimum width of a path decomposition of $G$.

The following observation is the analog of
\cref{obs:tw-decomposition} for path decompositions.

\begin{obs}
    For every annotated graph $(G,S)$ and for every positive integer $k$, we have $\pw(G,S) \leq k$ if and only if there exists a set $S' \subset V(G)$ with $S \subset S'$ such that there exists a path decomposition $\big(W_i \mid i \in [\ell]\big)$ of $G[S']$ of width at most $k$ such that for every component of $G - S'$ there exists $i \in [\ell]$ with $W_i$ containing the neighborhood of the component in $G$.
\end{obs}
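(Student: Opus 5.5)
The plan is to derive the statement from the treewidth case, \cref{obs:tw-decomposition}, by examining the two directions separately. The point to exploit is that a path decomposition is a tree decomposition whose tree is a path, and that every component of $G-S'$ has a clique neighborhood in $\torso_G(S')$.

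For the forward direction, assume $\pw(G,S)\le k$. Fix $S'$ with $S\subset S'$ and a path decomposition $(W_i\mid i\in[\ell])$ of $\torso_G(S')$ of width at most $k$. Since $G[S']\subseteq\torso_G(S')$, this is also a path decomposition of $G[S']$ of width at most $k$. Now let $C$ be a component of $G-S'$ and let $N$ be its neighborhood in $G$, which lies in $S'$. Any two distinct $u,v\in N$ are joined by a \stpath{u}{v} whose internal vertices lie in $C$, hence outside $S'$. So $N$ is a clique in $\torso_G(S')$. By the Helly property for subpaths of a path (equivalently, the standard fact that every clique lies in some bag), some $W_i$ contains $N$.

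For the converse, take $S'$ and a decomposition $(W_i)$ of $G[S']$ of width at most $k$ in which every component neighborhood lies in some bag. We will show that $(W_i)$ is a path decomposition of $\torso_G(S')$. The vertex condition holds already, because it does not depend on the edge set. For an edge $uv$ of $\torso_G(S')$, fix a \stpath{u}{v} $P$ whose internal vertices avoid $S'$. If $P$ is the single edge $uv$, then $uv\in E(G[S'])$ and some bag contains both endpoints. Otherwise the interior of $P$ is a connected subgraph of $G-S'$, so it lies in one component $C$, and both $u$ and $v$ belong to the neighborhood of $C$. By hypothesis that neighborhood sits inside some $W_i$, so $u,v\in W_i$. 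Hence $\pw(\torso_G(S'))\le k$, and therefore $\pw(G,S)\le k$.

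Neither direction involves a real obstacle. The only step that needs care is the clique-in-a-bag fact in the forward direction, and for path decompositions it reduces to the Helly property of intervals on a line: pairwise intersecting intervals share a common point.
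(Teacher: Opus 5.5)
Your proof is correct and follows essentially the route the paper intends. The paper gives no separate argument here; it relies only on the remark made for the treewidth analogue, that the neighborhood of each component of $G-S'$ is a clique in $\torso_G(S')$. Your two directions spell out exactly the details behind that remark: the forward direction places each such clique in a bag via the Helly property for subpaths, and the converse checks that every torso edge is either an edge of $G[S']$ or joins two neighbors of a single component.
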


The following theorem and \cref{obs:transfer-large-value} yield the pathwidth statement in \cref{thm:main}.

\begin{theorem}[{\cite[Theorem~7]{quickly-excluding-apex-forest}}] \label{thm:pw}
    For every annotated graph $(G,S)$ and for every positive integer $k$, 
    if $\pw(G,S) \geq 2k-1$, then 
    $(G,S)$ contains an $S$-rooted model of every forest on $k$ vertices. 
\end{theorem}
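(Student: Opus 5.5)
We argue by contraposition. Every forest on $k$ vertices is a subgraph of a tree on $k$ vertices, so it suffices to treat a tree $F$ on $k$ vertices. We show: if $F$ is not an $S$-rooted minor of $G$, then there is $S'$ with $S \subset S' \subset V(G)$ and a path decomposition of $\torso_G(S')$ of width at most $2k-2$. Such an $S'$ gives $\pw(G,S)\le 2k-2<2k-1$, which is the contrapositive of \cref{thm:pw}. By the path-decomposition analogue of \cref{obs:tw-decomposition}, a path decomposition of $G[S']$ is enough, provided the neighbourhood in $S'$ of every component of $G-S'$ lies inside a single bag.

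First reduction: each component of $G$ can be handled separately, and the resulting decompositions concatenated. We may also assume that every component of $G$ meets $S$. Otherwise we leave that component out of $S'$; its neighbourhood in $S'$ is empty, so it costs nothing.

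The core is an induction on $k$ with a strengthened, anchored hypothesis, following Diestel's short proof of the excluding-a-forest theorem (width $|F|-2$ there), but with the annotation built in. Write $F=F'+\ell$, where $\ell$ is a leaf of $F$ attached to $u$. The hypothesis I would carry is the following.

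\emph{For every tree $F$ on $k$ vertices, every vertex $r\in V(F)$, and every annotated graph $(G,S)$ together with a vertex $a\in S$: either $G$ contains an $S$-rooted model of $F$ in which the branch set of $r$ contains $a$, or there are $S'\supseteq S$ and a path decomposition of $(G,S')$ as above, of width at most $2k-2$, whose first bag contains $a$.}

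For the inductive step, apply the hypothesis for $F'$ (rooted at $u$) to suitable pieces of $G$. Then assemble a decomposition by a greedy sweep. We grow a connected set $C\ni a$ and maintain a small separator $X_i \subseteq S'$ between $C$ and the rest. When $C$ cannot be extended to an $S$-rooted model of $F$ through the new leaf, Menger's theorem gives a separator of size at most $k-1$. We add the separator vertices to $S'$; they may lie outside $S$, and this freedom is exactly what the torso definition allows. Each bag is then the union of the current separator, of size at most $k-1$, with a bag of the decomposition from the induction, of size at most $k-1$ after reindexing. This gives bags of size at most $2k-1$, that is, width at most $2k-2$; the factor $2$ in the statement comes from exactly this doubling.

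Components of $G-S'$ that arise in the sweep have their attachments contained in a single separator. Hence the condition that each such neighbourhood lies inside one bag is automatic.

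The main obstacle is making the rootedness propagate. Extending a model of $F'$ by a leaf needs a path to a fresh vertex of $S$, not merely to any vertex. So the Menger step must be applied to paths ending in $S\setminus C$, that is, to the relevant $C$--$S$ paths. In addition, the separator must be chosen so that the side it cuts off still satisfies the inductive hypothesis with the anchor moved to a separator vertex. I would handle this by adding the whole separator to the annotation of the remaining piece, since $S'$ may be enlarged at will, and by checking that an $(S\cup X)$-rooted model there lifts to an $S$-rooted model in $G$. This lifting check is where I expect the real work to be.
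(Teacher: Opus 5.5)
The paper does not prove this statement; it imports it from \cite[Theorem~7]{quickly-excluding-apex-forest}. Your sketch therefore has to stand on its own, and it has a genuine gap in the bookkeeping that is supposed to produce $2k-2$. Your inductive hypothesis for $F'$, which has $k-1$ vertices, gives decompositions of width $2(k-1)-2=2k-4$, that is, bags with up to $2k-3$ vertices, not $k-1$. No reindexing makes a bag smaller. Taking the union of such a bag with a separator of size up to $k-1$ gives bags of size up to $3k-4$, which exceeds $2k-1$ as soon as $k\ge 4$. Unrolling $b_k\le b_{k-1}+(k-1)$ gives bags of size about $\binom{k}{2}$. For an induction from $k-1$ to $k$ to close at width $2k-2$, each step may add at most two vertices to every bag. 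So whatever you overlay on the inductive decompositions must have constant size, and your explanation of the factor $2$ as separator plus inductive bag does not work. Moreover, you never say between which sets Menger's theorem is applied, nor why $k$ disjoint paths would complete the model. The bound $k-1$ on the separator is therefore itself unsupported.

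Second, the step you postpone, making rootedness propagate, is the actual content of the theorem, and the remedy you propose fails in general. Annotating a separator $X$ in the remaining piece makes rooted models there cheaper. A branch set whose only annotated vertex is some $x\in X$ must be completed in $G$ by a path from $x$ to $S$ on the side of $C$. These paths must be pairwise disjoint and avoid the branch set containing $a$, and nothing in your construction provides such a linkage. A minimum cut between $C$ and $S\setminus C$ comes with disjoint paths to $S$ on the far side, inside the piece, not back into $C$. For instance, if $a$ is the only vertex of $S$ on the side of $C$, then no branch set other than the one through $a$ can be completed. The annotated piece may then contain rooted models even though $G$ has no $S$-rooted model of $F$, so the inductive hypothesis returns an alternative that is useless for $G$ and you obtain neither conclusion.

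Two smaller points. The anchored hypothesis is false without connectivity: take $a$ isolated, a large clique inside $S$ in another component, and $F=K_2$; there is no anchored model, yet the torso pathwidth is large. It must also hold for arbitrary roots, since you invoke it for $F'$ rooted at $u$, typically an internal vertex of $F'$. Your step only describes growing the branch set of the anchor, and does not say how several subtrees hanging off a non-leaf root are placed.
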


\subsection{Vertex cover number} \label{ssec:vcn}
The \defin{vertex cover number} of a graph $G$, denoted by \defin{$\vc(G)$}, is the minimum number $k\in\mathbb{N}$ for which there is a set $X\subseteq V(G)$ with $|X|=k$ such that every edge of $G$ has at least one endpoint in $X$.
Such a set $X$ is called a \defin{vertex cover} in $G$.

\begin{obs}\label{obs:vc}
    For every annotated graph $(G,S)$ and for every positive integer $k$, 
    we have $\vc(G,S) \leq k$,
    if and only if there exists a set $X \subset V(G)$ with $|X| \leq k$ 
    such that there is no $(S \setminus X)$-rooted model of $K_2$ in $G-X$.
\end{obs}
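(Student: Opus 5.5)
The plan is to prove the two implications separately. The key fact is that an $(S\setminus X)$-rooted model of $K_2$ in $G-X$ is exactly a path in $G-X$ with both endpoints in $S\setminus X$ (distinct) and at least one edge. The two branch sets can be taken as a split of the vertex set of such a path into two subpaths, one containing each endpoint.

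For the forward direction, suppose $\vc(G,S)\le k$. Fix $S'$ with $S\subseteq S'\subseteq V(G)$ and a vertex cover $X$ of $\torso_G(S')$ with $|X|\le k$; we may assume $X\subseteq S'$. Suppose for contradiction that $G-X$ contains a path $P$ joining distinct $u,v\in S\setminus X$. The vertices of $P$ lying in $S'$ include $u$ and $v$. List them in the order they appear along $P$ as $u=s_0,s_1,\dots,s_m=v$ with $m\ge1$. Consecutive vertices $s_i,s_{i+1}$ are joined by a subpath of $P$ whose internal vertices lie outside $S'$, so $s_is_{i+1}$ is an edge of $\torso_G(S')$. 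Neither endpoint of this edge is in $X$, because $P$ avoids $X$. This contradicts $X$ being a vertex cover.

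For the backward direction, let $X$ with $|X|\le k$ be as in the statement, and set $S'=S\cup X$. We claim that $X$ is a vertex cover of $\torso_G(S')$, which gives $\vc(G,S)\le\vc(\torso_G(S'))\le k$. Take any edge $ab$ of the torso with $a,b\notin X$. Then $a,b\in S\setminus X$, and there is an $a$--$b$ path in $G$ whose internal vertices avoid $S'\supseteq X$. That path therefore lies in $G-X$ and yields an $(S\setminus X)$-rooted model of $K_2$, which is a contradiction.

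I expect no real obstacle. The only care needed is to justify $X\subseteq S'$ in the forward direction (a vertex cover of the torso is by definition a subset of $S'$) and to translate rooted $K_2$-models into paths between distinct vertices of $S\setminus X$.
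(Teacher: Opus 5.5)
Your proof is correct and follows essentially the same route as the paper. In the forward direction you project a path in $G-X$ between distinct vertices of $S\setminus X$ onto the torso; the paper only asserts that the resulting torso path exists, whereas you spell out why consecutive $S'$-vertices on the path give a torso edge avoiding $X$. In the backward direction you take $S'=S\cup X$ and show that $X$ covers every torso edge, exactly as the paper does.
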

\begin{proof}
    Let $(G,S)$ be an annotated graph and let $k$ be a positive integer.
    First, assume that $\vc(G,S) \leq k$.
    Consider a vertex subset $S'\subset V(G)$ with $S\subseteq S' $ and $\vc(\torso_G(S')) = \vc(G,S)$ 
    Let $X \subset S'$ with $|X| \leq k$ be such that $\torso_G(S') - X$ is edgeless.
    Suppose that there is an $(S\setminus X)$-rooted minor of $K_2$ in $G-X$.
    It follows that there is an \stpath{s_1}{s_2} between distinct vertices $s_1,s_2 \in S$ in $G - X$ and since $s_1,s_2 \in S'$, there is an \stpath{s_1}{s_2} in $\torso_G(S')-X$, which is a contradiction.
    
    Next, suppose that there exists a set $X \subset V(G)$ with $|X| \leq k$ such that there is no $(S \setminus X)$-rooted model of $K_2$ in $G-X$.
    We claim that $\torso_G(S \cup X) - X$ is edgeless.
    Otherwise, there are two distinct $s_1,s_2 \in S\setminus X$ connected by an edge in $\torso_G(S \cup X) - X$.
    However, this implies that there is an \stpath{s_1}{s_2} in $G - X$, hence an $(S \setminus X)$-rooted model of $K_2$, which is again a contradiction.
\end{proof}

    

Let $(G,A)$ be an annotated graph.
An \defin{$A$-path} in $G$ is a path with at least two vertices and both ends in $A$. 
In 1964, Gallai~\cite{Gallai64} proved that either $G$ contains $k$ disjoint $A$-paths, or there exists a subset $X$ of the vertices of $G$ with $|X|\leq 2k-2$ such that every $A$-path in $G$ contains a vertex of $X$.
Note that the latter statement (by~\Cref{obs:vc}) is equivalent to $\vc(G,A) \leq 2k-2$.
Therefore, we immediately obtain one of the implications of the vertex cover statement in \cref{thm:main}.
The other implication follows from \cref{obs:transfer-large-value}.

\begin{theorem}\label{thm:vc}
    For every annotated graph $(G,S)$ and for every positive integer $k$, if $\vc(G,S) \geq 2k-1$, then $G$ contains an $S$-rooted model of the perfect matching on $2k$ vertices.
\end{theorem}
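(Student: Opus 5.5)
The plan is to combine Gallai's theorem on $A$-paths with \cref{obs:vc}, and then convert disjoint $S$-paths into an $S$-rooted model of a perfect matching. We argue by contraposition. Suppose $G$ has no $k$ pairwise disjoint $S$-paths. Gallai's theorem with $A = S$ then gives a set $X \subseteq V(G)$ with $|X| \leq 2k-2$ meeting every $S$-path. Consequently $G - X$ contains no path of length at least one with both ends in $S \setminus X$. Therefore there is no $(S\setminus X)$-rooted model of $K_2$ in $G - X$: such a model would consist of two adjacent connected sets, each containing a vertex of $S \setminus X$, and would yield such a path. By \cref{obs:vc}, $\vc(G,S) \leq 2k-2$. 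Hence, if $\vc(G,S) \geq 2k-1$, the graph $G$ has $k$ pairwise disjoint $S$-paths $P_1,\dots,P_k$.

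It remains to turn these paths into a model of the perfect matching on $2k$ vertices, with vertices $a_i,b_i$ and edges $a_ib_i$ for $i \in [k]$. For each $i$, let $P_i$ have distinct ends $s_i,t_i \in S$; they are distinct because an $S$-path has at least two vertices. Pick an edge $e_i = uv$ of $P_i$ and split $P_i$ at $e_i$ into two subpaths: $B_{a_i}$, containing $s_i$ and $u$, and $B_{b_i}$, containing $v$ and $t_i$. Both subpaths are connected, and each contains a vertex of $S$. The edge $uv$ witnesses the adjacency of $B_{a_i}$ and $B_{b_i}$. All $2k$ sets are pairwise disjoint, because the paths $P_i$ are disjoint and each path is split into two disjoint parts. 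This gives the required $S$-rooted model.

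There is no real obstacle here. The only step needing care is the translation in the first paragraph between Gallai's hitting-set conclusion and the condition in \cref{obs:vc}: any rooted $K_2$ model in $G - X$ must contain a path between two distinct vertices of $S\setminus X$. This holds because the union of two adjacent connected branch sets is connected, so it contains a path between the two roots, and any such path is, or contains, an $S$-path avoiding $X$.
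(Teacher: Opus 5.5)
Your proposal is correct and matches the paper's argument: Gallai's theorem on $A$-paths with $A=S$, combined with \cref{obs:vc}, yields $k$ pairwise disjoint $S$-paths whenever $\vc(G,S)\geq 2k-1$. You also spell out two steps the paper leaves implicit: translating Gallai's hitting set into the condition of \cref{obs:vc}, and splitting each $S$-path at an edge to get the two $S$-rooted branch sets of a matching edge.
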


\subsection{Feedback vertex number}\label{ssec:fv}

The \defin{feedback vertex number} of a graph $G$, denoted by \defin{$\fv(G)$}, is the minimum number $k\in\mathbb{N}$ for which there is a set $X\subseteq V(G)$ with $|X|=k$ such that $G-X$ is acyclic.

We begin this subsection by proving~\Cref{thm:obstructions-forests}, which we restate for convenience

\thmforests*
\begin{proof}
    First, note that if $(K_3,V(K_3))$ is a minor of an annotated graph $(G,S)$, then by~\Cref{obs:projecting-S-rooted-minors-in-torso}, $K_3$ is a minor of $\torso_G(S')$ for every $S' \subset V(G)$ with $S \subset S'$,
    and so $(G,S) \notin \calC^\torsobullet$.
    
    Therefore, to conclude the proof, it suffices to show that for every annotated graph $(G,S)$, if $(K_3,V(K_3))$ is not a minor of $(G,S)$, then $(G,S) \in \calC^\torsobullet$.
    Let $(G,S)$ be an annotated graph with no $(K_3,V(K_3))$ minor.
    By considering each component separately, we may assume without loss of generality that $G$ is connected.
    When $S = \emptyset$, then $(G,S) \in \calC^\torsobullet$, as witnessed by $S' = \emptyset$.
    Thus, we may assume that $S \neq \emptyset$.
    Let $T$ be a connected subgraph of $G$ containing $S$ and with a minimum number of edges under these constraints.
    Observe that $T$ is a tree.
    Let $B \subset V(T)$ be the set of vertices of degree at least $3$ in $T$.
    We define
    \[S' = S \cup B.\]
    We shall prove that $\torso_G(S')$ is a forest witnessing that $(G,S) \in \calC^\torsobullet$.
    To do so, we first prove the following:
    \begin{claim} \label{cla:fv_distinct_comp}
        For every vertex $u \in S'$,
        for every pair of distinct components $T_1,T_2$ of $T-u$,
        the sets $V(T_1)$ and $V(T_2)$ lie in distinct components of $G-u$.
    \end{claim}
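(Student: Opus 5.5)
We argue by contradiction. Suppose that $u \in S'$ and that $T_1,T_2$ are distinct components of $T-u$ whose vertex sets lie in the same component of $G-u$. We will build an $S$-rooted model of $K_3$ in $G$, which contradicts the assumption that $(K_3,V(K_3))$ is not a minor of $(G,S)$.

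The first step is to see that each component $T_i$ of $T-u$ contains a vertex of $S$. Suppose some $T_i$ contains no vertex of $S$. Deleting $T_i$, together with the edge joining it to $u$, leaves a connected subgraph containing $S$ with fewer edges, which contradicts the minimality of $T$. (Here we use that $T_i$ is attached to the rest of $T$ only through $u$.) So we may pick $s_1 \in S \cap V(T_1)$ and $s_2 \in S \cap V(T_2)$.

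The second step is to produce a path in $G-u$ from $T_1$ to $T_2$ that avoids $u$. By hypothesis, $G-u$ contains a path $P$ from $V(T_1)$ to $V(T_2)$. Cut $P$ at its last vertex in $V(T_1)$ and at its first subsequent vertex in $V(T)\setminus V(T_1)$. The resulting subpath $Q$ runs from a vertex of $T_1$ to a vertex of some other component $T_j$ of $T-u$, with $j\neq 1$. Its internal vertices lie outside $V(T)$, and $Q$ avoids $u$ because $P$ does. We then rename so that $T_2$ is this component $T_j$, and choose $s_2$ in the renamed $T_2$ using the first step.

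The third step, which is the main point, is to obtain a third branch set containing a vertex of $S$ at $u$ itself. If $u \in S$, we take the branch sets $B_1 = V(T_1)$, $B_2 = V(T_2)\cup(\text{internal vertices of }Q)$, and $B_3=\{u\}$. Each is connected. They are pairwise adjacent: $u$ is adjacent to both $T_1$ and $T_2$ through tree edges, and $B_1$ meets $B_2$ along $Q$. Each contains a vertex of $S$, so together they form an $S$-rooted model of $K_3$.

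If instead $u\in B\setminus S$, then $u$ has degree at least $3$ in $T$, so there is a third component $T_3$ of $T-u$, distinct from $T_1$ and $T_2$. By the first step it contains some $s_3\in S$. In this case we take the branch sets $B_1 = V(T_1)$, $B_2 = V(T_2)\cup(\text{internal vertices of }Q)$, and $B_3=\{u\}\cup V(T_3)$. These are connected and pairwise disjoint, since the internal vertices of $Q$ are outside $T$. They are pairwise adjacent: $u$ is joined to $T_1$ and $T_2$ through tree edges, and $Q$ joins $B_1$ to $B_2$. Each contains a vertex of $S$, namely $s_1$, $s_2$ and $s_3$, so again we get an $S$-rooted $K_3$ and the desired contradiction.

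The only delicate part is the path surgery in the second step. We must ensure that $Q$ leaves $T_1$ and first re-enters $T$ in a component other than $T_1$, and that it avoids $u$. Both follow from taking the last exit from $T_1$ and the first subsequent re-entry into $T$.
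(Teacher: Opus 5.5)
Your proof is correct and follows the paper's own argument. You truncate the connecting path so that its interior avoids $V(T)$, and you use minimality of $E(T)$ to place a vertex of $S$ in every component of $T-u$. You then form the $S$-rooted $K_3$ model from $T_1$, from $T_2$ together with the path interior, and from $\{u\}$ or $\{u\}\cup V(T_3)$ according to whether $u\in S$ or $u\in B$; the only cosmetic difference is that the paper adds the path's internal vertices to the branch set of $T_1$ rather than $T_2$.
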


    \begin{proofclaim}
        Let $u \in S'$.
        Suppose for contradiction that there is a path $P$ in $G-u$
        between $V(T_1)$ and $V(T_2)$  for two components $T_1$ and $T_2$ of $T-u$.
        By possibly replacing $T_2$ by the first component of $T-u$ intersecting
        $P$ after $T_1$, 
        we assume that $T_1$ and $T_2$ are the only components of $T-u$
        intersecting $P$.
        We denote by $U$ the set of all the internal vertices of $P$.
        If $u \in B$ (i.e. $u$ has degree more than $2$ in $T$),
        then there is a component $T_3$ of $T-u$ distinct from $T_1$ and $T_2$,
        and we set $A = \{u\} \cup V(T_3)$.
        Note that by the minimality of $T$, $T_3$ contains a vertex in $S$.
        Otherwise, $u$ is in $S$, and we set $A = \{u\}$.
        In both cases, 
        $A$ is a set of vertices that induces a connected subgraph of $G$, 
        and that contains both $u$ and a vertex of $S$.
        Consider $(V(T_1) \cup U, V(T_2), A)$.
        By construction, it is a model of $K_3$.
        Moreover, by minimality of $E(T)$, 
        $V(T_1)$ and $V(T_2)$ both intersect $S$.
        Since $A$ also intersects $S$, this model is $S$-rooted.
        This contradiction proves the claim.
    \end{proofclaim}

    Now, let $E' = \{uv \mid u,v\in S', \text{vertices between $u$ and $v$ in $T$ are not in $S'$}\}$, 
    and observe that $T$ is a subdivision of $(S',E')$. 
    In particular, $(S',E')$ is a tree.
    We claim that $(S',E')$ is exactly $\torso_G(S')$. 
    By definition, $E'\subseteq E(\torso_G(S'))$. 
    Suppose now that there is $uv \in E(\torso_G(S'))$ such that $uv \notin E'$. 
    Then there is $w\in S'$ between $u$ and $v$ in $T$.
    By \cref{cla:fv_distinct_comp}, the components $T_u$ and $T_v$ of $T-w$ 
    containing $u$ and $v$ respectively lie in distinct components of $G-w$. 
    Therefore, there is no edge between $u$ and $v$ in $\torso_G(S')$, 
    a contradiction.
    This proves that $\torso_G(S') = (S',E')$, and so is a forest.
    %
    %
\end{proof}


\Cref{thm:obstructions-forests} implies the following description of annotated graphs $(G,S)$ with $\fv(G,S) \leq k$.

\begin{corollary}\label{cor:fv}
    For every annotated graph $(G,S)$ and for every positive integer $k$, 
    we have $\fv(G,S) \leq k$,
    if and only if there exists a set $X \subset V(G)$ with $|X| \leq k$ 
    such that there is no $(S \setminus X)$-rooted model of $K_3$ in $G-X$.
\end{corollary}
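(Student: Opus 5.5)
We prove the two implications separately, mirroring the proof of \cref{obs:vc}, with \cref{thm:obstructions-forests} playing the role that the trivial characterization of edgeless torsos played there.

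\emph{Forward direction.} Assume $\fv(G,S) \leq k$. Choose $S'$ with $S \subset S' \subset V(G)$ such that $\fv(\torso_G(S')) \leq k$, and let $X \subset S'$ with $|X| \leq k$ be such that $\torso_G(S') - X$ is a forest. We claim that $G - X$ has no $(S\setminus X)$-rooted model of $K_3$. The key identity is
\[
\torso_G(S') - X = \torso_{G-X}(S' \setminus X).
\]
To check it, note that a $u$--$v$ path in $G-X$ whose internal vertices avoid $S'\setminus X$ also avoids $X$, hence avoids $S'$. Conversely, a path in $G$ with internal vertices outside $S'$ and endpoints $u,v\notin X$ lies entirely in $G-X$. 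Now $S\setminus X \subset S'\setminus X$. If $K_3$ were an $(S\setminus X)$-rooted minor of $G-X$, then \cref{obs:projecting-S-rooted-minors-in-torso} would make $K_3$ a minor of $\torso_{G-X}(S'\setminus X)$, which is a forest. This is a contradiction.

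\emph{Backward direction.} Assume $X$ with $|X|\leq k$ is such that $K_3$ is not an $(S\setminus X)$-rooted minor of $G-X$. By \cref{thm:obstructions-forests} applied to $(G-X, S\setminus X)$, there is a set $S''$ with $S\setminus X \subset S'' \subset V(G)\setminus X$ such that $\torso_{G-X}(S'')$ is a forest. Put $S' = S'' \cup X$, so $S \subset S'$. By the same identity as above, now with $S'\setminus X = S''$, we get $\torso_G(S') - X = \torso_{G-X}(S'')$, which is a forest. Hence $\fv(\torso_G(S')) \leq |X| \leq k$, and therefore $\fv(G,S)\leq k$.

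The only step requiring care is the torso identity, and it is routine. All the real work sits in \cref{thm:obstructions-forests}, which we invoke as a black box.
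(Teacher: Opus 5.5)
Your proof is correct and follows the paper's argument: both directions reduce to \cref{thm:obstructions-forests} applied to $(G-X,S\setminus X)$ via the identity $\torso_G(S')-X=\torso_{G-X}(S'\setminus X)$ for $X\subseteq S'$, which the paper uses implicitly and you verify explicitly. In the forward direction you invoke \cref{obs:projecting-S-rooted-minors-in-torso} directly, which is exactly the easy direction of \cref{thm:obstructions-forests} that the paper cites at that point.
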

\begin{proof}
    Let $(G,S)$ be an annotated graph and $k$ be a positive integer.
    First, assume that $\fv(G,S) \leq k$.
    Let $S' \subset V(G)$ with $S \subset S'$ be such that $\fv(\torso_G(S')) = \fv(G,S)$
    and let $X \subset S'$ with $|X| \leq k$ be such that $\torso_G(S') - X$ is a forest.
    Then, by \cref{thm:obstructions-forests}, $G - X$ has no $(S\setminus X)$-rooted model of $K_3$.
    
    Otherwise, assume there is $X \subset V(G)$ with $|X| \leq k$ such that there is no $(S \setminus X)$-rooted model of $K_3$ in $G-X$. 
    By \cref{thm:obstructions-forests}, $(G-X, S\setminus X) \in \calC^\torsobullet$.
    Hence, there is $S'\subseteq V(G)$ with $(S\setminus X)\subseteq S'$ 
    such that $\torso_{G-X}(S')$ is a forest,
    and so $\fv(\torso_G(S' \cup X)) \leq |X| \leq k$.
\end{proof}

We next show that if $\fv(G,S)$ is large, then $G$ contains a large collection of disjoint $K_3$'s as $S$-rooted minors. The reverse implication is an immediate corollary of~\cref{cor:fv} (combined with~\cref{obs:transfer-large-value}). This way, we obtain the feedback vertex number statement in~\cref{thm:main}.

\begin{theorem} \label{thm:fv_EP}
    There exists a function $f_{\ref{thm:fv_EP}}$ such that for every annotated graph $(G,S)$ and for every positive integer $k$, if $\fv(G,S) \geq f_{\ref{thm:fv_EP}}(k)$, then $(G,S)$ contains an $S$-rooted model of $H_k$, where $H_k$ is $k$ disjoint copies of $K_3$. 
\end{theorem}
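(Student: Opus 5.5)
The plan is to combine \cref{cor:fv} with an Erdős–Pósa-type argument that uses the annotated grid theorem (\cref{thm:rooted:gridminor}) and the Helly-type \cref{lem:tw:erdos:posa}. By \cref{cor:fv}, $\fv(G,S)\ge f(k)$ means that no set $X$ of fewer than $f(k)$ vertices destroys all $(S\setminus X)$-rooted models of $K_3$ in $G-X$. Let $\calH$ be the family of connected subgraphs $G[B_1\cup B_2\cup B_3]$ over all $S$-rooted models $(B_1,B_2,B_3)$ of $K_3$. Each member of $\calH$ is connected and meets $S$. The goal is to find $k$ pairwise disjoint members of $\calH$, since their models then give an $S$-rooted model of $H_k$.

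\emph{Case 1: $\tw(G,S)\ge f_{\ref{thm:rooted:gridminor}}(N)$ for a suitable $N=\Oh(k)$.} I apply \cref{thm:rooted:gridminor} with $H$ a plane graph consisting of $k$ disjoint triangles, all vertices on the outer face, and $T=V(H)$. This is legitimate: $k$ disjoint triangles drawn side by side have every vertex on the outer face. The theorem then gives a model of $(H,V(H))$ in $(G,S)$, which is exactly an $S$-rooted model of $H_k$. Since this case is handled directly by the known theorem, no grid manipulation is needed.

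\emph{Case 2: $\tw(G,S)< w:=f_{\ref{thm:rooted:gridminor}}(3k)$.} By \cref{obs:tw-decomposition}, fix $S'\supseteq S$ and a tree decomposition $\calT$ of $(G,S')$ of width less than $w$. I apply \cref{lem:tw:erdos:posa} to $\calH$ with parameter $k$. In outcome (i) we get $k$ members of $\calH$ with pairwise disjoint sets $\calT[\cdot]$; these are pairwise disjoint subgraphs, because a common vertex would lie in a common bag. We are then done. In outcome (ii) there is a set $Z$, a union of at most $k-1$ bags, with $|Z|\le (k-1)w$, meeting every member of $\calH$. Hence $G-Z$ has no $(S\setminus Z)$-rooted $K_3$ model. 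The reason is that any such model in $G-Z$ is also an $S$-rooted model in $G$, so it lies in $\calH$, and it avoids $Z$. By \cref{cor:fv} this gives $\fv(G,S)\le (k-1)w$. Setting $f_{\ref{thm:fv_EP}}(k)=k\cdot f_{\ref{thm:rooted:gridminor}}(3k)+1$ therefore contradicts the hypothesis in this outcome.

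The main point to check is the application of \cref{lem:tw:erdos:posa}. The decomposition is one of $(G,S')$ rather than of $G$, so members of $\calH$ may pass through components of $G-S'$, and $\calT[H]$ only records bags that $V(H)$ meets. I need two things from the lemma as stated:
\begin{enumerate}
    \item subgraphs with disjoint $\calT[\cdot]$ are vertex-disjoint;
    \item hitting with $Z$ refers to vertices of $H$ in $S'$.
\end{enumerate}
Point (ii) holds because bags lie in $S'$. For point (i), I must verify that two members of $\calH$ with disjoint $\calT[\cdot]$ cannot share a vertex outside $S'$. Such a shared vertex lies in a component $C$ of $G-S'$ whose neighbourhood sits in a single bag. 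Both subgraphs are connected and contain a vertex of $S$, hence a vertex outside $C$, so each must pass through $N(C)$ and meet that bag. This contradicts the disjointness of their $\calT[\cdot]$. Everything else is bookkeeping with \cref{cor:fv}.
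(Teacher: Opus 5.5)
Your proposal is correct and follows the paper's proof: you split on whether $\tw(G,S)\ge f_{\ref{thm:rooted:gridminor}}(3k)$, apply \cref{thm:rooted:gridminor} in the large case, and otherwise apply \cref{lem:tw:erdos:posa} to the family of $S$-rooted $K_3$ models and finish with \cref{cor:fv}; the paper uses the slightly smaller threshold $(k-1)f_{\ref{thm:rooted:gridminor}}(3k)+1$. You also check explicitly that pairwise disjoint sets $\calT[\cdot]$ give vertex-disjoint subgraphs even when those subgraphs meet components of $G-S'$, which the paper leaves implicit, and your argument for this is sound.
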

\begin{proof}
    Let $k$ be a positive integer and let $g(k) = f_{\ref{thm:rooted:gridminor}}(|V(H_k)|)$ be the value given by~\Cref{thm:rooted:gridminor} such that for every annotated graph $(G,S)$ if $\tw(G,S) \geq g(k)$, then $(G,S)$ contains an $S$-rooted model of $H_k$.
    Note that $H_k$ is a planar graph admitting a plane embedding where all the vertices are in the outer face.
    We define $f_{\ref{thm:fv_EP}}(k) = (k-1)g(k)+1$.
    
    Let $(G,S)$ be an annotated graph with $\fv(G,S) \geq f_{\ref{thm:fv_EP}}(k)$.
    If $\tw(G,S) \geq g(k)$, then by~\Cref{thm:rooted:gridminor}, $(G,S)$ contains an $S$-rooted model of $H_k$, as desired.
    Thus, we assume that $\tw(G,S) < g(k)$.
    Let $\calH$ be the family of the unions of branch sets of $S$-rooted models of $K_3$ in $(G,S)$.
    By~\Cref{lem:tw:erdos:posa} applied to a tree decomposition of width $\tw(G,S)$,
    either there are $k$ disjoint members of $\mathcal{H}$ and so $H_k$ is an $S$-rooted minor of $G$, or
    there exists a set $X \subset V(G)$ with $|X| \leq (k-1) g(k)$ such that every subgraph in $\calH$ contains a vertex of $X$.
    Suppose we are in the latter case.
    In particular, $(G - X,S \setminus X)$ has no $S\setminus X$-rooted model of $K_3$.
    Therefore, 
    \Cref{cor:fv} implies that $\fv(G,S) \leq |X| < f_{\ref{thm:fv_EP}}(k)$.
\end{proof}

\subsection{Treedepth}

A \defin{rooted forest} is a disjoint union of rooted trees.
The \defin{vertex-height} of a rooted forest $F$ is the
maximum number of vertices on a path from a root to a leaf in $F$, and the \defin{depth} of a vertex $u\in V(F)$ is the number of vertices in the path between $u$ and the root of its component.
For two vertices $u,v$ in a rooted forest $F$, we say that $u$ is a \defin{descendant} of $v$ in $F$ if $v$ lies on the path from a root to $u$ in $F$. The \defin{closure} of $F$ is the graph with vertex set $V(F)$ and edge set
$\{vw \mid v\neq w \text{ and $v$ is a descendant of $w$ in $F$}\}$.
We say that $F$ is an \defin{elimination forest} of $G$ if $V(F) = V(G)$ and $G$ is a subgraph of the closure of $F$. The \defin{treedepth} of a graph $G$, denoted by \defin{$\td(G)$}, is $0$ if $V(G)$ is empty, and otherwise is the minimum vertex-height of an elimination forest of $G$.


The following observation is straightforward.

\begin{obs}
    For every annotated graph $(G,S)$ and for every positive integer $k$,
    we have $\td(G,S) \leq k$ if and only if 
    there exists a set $S' \subset V(G)$ with $S \subset S'$,
    and a rooted forest $F$ of height at most $k$ with $G[S']$ being a subgraph of the closure of $F$,
    such that,
    for every component $C$ of $G - S'$,
    there exists a root-to-leaf path in $F$ containing the neighborhood of $V(C)$ in $G$.
\end{obs}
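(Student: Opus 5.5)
The plan is to unfold both definitions and verify the two directions directly. By definition, $\td(G,S) \le k$ means that there is some $S'$ with $S \subset S' \subset V(G)$ such that $\torso_G(S')$ has an elimination forest $F$ of vertex-height at most $k$. Note that $V(F) = S'$. So it suffices to show, for a fixed $S'$ and a fixed rooted forest $F$ with $V(F) = S'$, the following. The graph $\torso_G(S')$ is a subgraph of the closure of $F$ if and only if two conditions hold: $G[S']$ is a subgraph of the closure of $F$, and for every component $C$ of $G-S'$ the set $N_G(V(C))$ lies on a single root-to-leaf path of $F$.

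For the forward direction, assume $\torso_G(S')$ is contained in the closure of $F$. First, $G[S'] \subseteq \torso_G(S')$, so the first condition is immediate. For a component $C$ of $G-S'$, any two distinct vertices $u,v \in N_G(V(C))$ are joined by a path through $C$ with all internal vertices outside $S'$. Hence $uv$ is an edge of the torso, and therefore $u,v$ are in ancestor--descendant relation in $F$. A set of pairwise comparable vertices in a rooted forest is a chain. Such a chain lies in one tree and on the path from the root to its deepest element, and this path extends to a root-to-leaf path. This gives the second condition.

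For the backward direction, let $uv$ be an edge of $\torso_G(S')$, witnessed by a $u$--$v$ path $P$ with internal vertices outside $S'$. If $P$ has no internal vertices, then $uv \in E(G[S'])$, and the first condition applies. Otherwise, the internal vertices of $P$ form a connected subgraph of $G - S'$, so they lie in one component $C$, and $u,v \in N_G(V(C))$. By the second condition, $u$ and $v$ lie on a common root-to-leaf path, so one is a descendant of the other. Thus $uv$ is an edge of the closure of $F$.

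The heights match because $F$ is the same forest in both formulations, so the equivalence follows. The only mildly delicate point is the step that pairwise comparability implies lying on one root-to-leaf path. This is standard, and I expect no real obstacle. One should also note that "height" in the statement means vertex-height.
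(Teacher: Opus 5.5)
Your argument is correct in substance and is exactly the reasoning the paper leaves implicit. The paper gives no proof and calls the observation straightforward. Compare the sentence before \cref{obs:tw-decomposition}: neighborhoods of components of $G-S'$ are cliques of $\torso_G(S')$, and cliques of the closure of a rooted forest are chains, which lie on root-to-leaf paths.

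One degenerate case slips through. A warning sign is that you never use that $k$ is positive, although the statement is false for $k=0$. Take $S=\emptyset$ and $G$ non-null. Then $\td(G,\emptyset)=0$ via $S'=\emptyset$. But the only forest of height $0$ is the null forest, which has no root-to-leaf path at all, so no component of $G-S'=G$ can be accommodated.

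Correspondingly, your equivalence for fixed $(S',F)$ is false when $S'=\emptyset\neq V(G)$. In the forward direction, the step through ``the path from the root to its deepest element'' tacitly assumes $N_G(V(C))\neq\emptyset$.

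The repair is short. If $N_G(V(C))=\emptyset$, any root-to-leaf path of $F$ will do, provided $F$ is non-null. If the witness has $S'=\emptyset$ (so $S=\emptyset$) while $G$ is non-null, replace it by $S'=\{v\}$ for any vertex $v$. Its torso has treedepth $1\leq k$, and every component of $G-v$ has neighborhood contained in $\{v\}$. This is precisely where $k\geq 1$ is needed.
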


Hodor, La, Micek, and Rambaud~\cite[Theorem~6]{quickly-excluding-apex-forest} proved that if for an annotated graph $(G,S)$, we have $\td(G,S) > \binom{k}{2}$, then $(G,S)$ contains $(P_k,V(P_k))$ as a minor.
Later, this quadratic bound was improved to linear by Claus, Hodor, Joret, and Morin~\cite[Theorem~3]{as-quickly-as-possible}.

\begin{theorem}[{\cite[Theorem~3]{as-quickly-as-possible}}]
    For every annotated graph $(G,S)$ and for every positive integer $k$, if $\td(G,S) \geq 2k-1$, then $G$ contains an $S$-rooted model of a path on $k$ vertices.
\end{theorem}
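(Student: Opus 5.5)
We argue by induction on $k$, following the scheme of the non-annotated statement that a graph of large treedepth contains a long path, but replacing ordinary paths by $S$-rooted path models. The base case $k=1$ is immediate: $\td(G,S)\ge 1$ forces $S\neq\emptyset$, and any vertex of $S$ is an $S$-rooted model of $P_1$. We may also assume $G$ is connected, since treedepth of a disjoint union is the maximum over the components, and torsos of distinct components do not interact.

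For the inductive step, suppose $G$ is connected and $\td(G,S)\ge 2k-1$. Fix a vertex $s\in S$, a set $S'\supseteq S$ minimizing $\td(\torso_G(S'))$, and the elimination forest given by the observation above. We try to build the path greedily. Take a minimal connected subgraph $T\ni s$ of $G$ and consider components of $G-V(T)$. We want to find two vertex-disjoint connected sets $A$ and $B$: $A$ contains a vertex of $S$ and induces an $S$-rooted model of $P_{k-1}$ (obtained by induction), and $B\ni s$ is adjacent to the end branch set of that model. Concretely, the choice is this. If removing a single connected set $B\ni s$ (with $B\cap S$ nonempty) leaves a component $C$ with $\td(C,S\cap V(C))\ge 2k-3$ and with $C$ adjacent to $B$, then induction inside $C$ gives a path model. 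However, we need the \emph{end} branch set to touch $B$, and this is the genuine difficulty.

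To handle it, we strengthen the induction hypothesis as in Claus–Hodor–Joret–Morin: for every $s\in S$ in a connected $G$ with $\td(G,S)\ge 2k-1$, there is an $S$-rooted model of $P_k$ whose first branch set contains $s$. For the inductive step, let $B_1$ be the vertex set of a shortest path from $s$ to... More robustly, we use a ball-growing argument. Let $B$ range over connected sets containing $s$. We grow $B$ along a path until some component $C$ of $G-B$ satisfies $\td(C,S\cap V(C))\ge 2k-3$ while every component of $G-(B\cup N)$ has smaller value. The torso treedepth of $(G,S)$ is then bounded by $2$ plus the maximum over components of $G-\{x,y\}$ for suitable separating vertices. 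This accounts for the factor 2: each step of the path costs at most 2 in torso treedepth. We place the two vertices $x$ (the attachment of $C$) and $s$ into $S'$ at the top of the elimination forest. Components of $G-S'$ then have neighbourhoods contained in a root-to-leaf path, since their neighbourhoods in the top part are among $\{s,x\}$.

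The main obstacle is exactly this upper-bound step. We must show that if no component $C$ attached to a rooted piece through a single vertex $x\in N(B)$ has large torso treedepth with $x$ available as the start vertex, then $\td(G,S)\le 2k-2$. This requires choosing $S'$ as the union of $\{s,x\}$ with optimal sets for the components, and verifying that the torso of the union is contained in the closure of the combined forest. Here the key point is that paths through $G-S'$ between different components must pass through $B$. We would first try taking $B$ to be a shortest $s$–$(S\setminus\{s\})$ path and argue via minimality that $B$'s interior can be left out of $S'$.
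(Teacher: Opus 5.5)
The paper does not prove this statement; it quotes it from Claus, Hodor, Joret, and Morin. Your argument therefore has to stand on its own, and as written it has a genuine gap: the inductive step is never carried out. Everything rests on the upper bound ``if no suitable rooted $P_k$ exists, then $\td(G,S)\le 2k-2$''. You call this the main obstacle yourself and only describe what you would try, and the construction you sketch fails. Suppose you put $s$ and $x$ on top, take optimal sets $S'_C$ for the components $C$ of $G-B$, and leave $B\setminus\{s\}$ outside $S'$. Then a component $K$ of $G-S'$ meeting $B\setminus\{s\}$ reaches into each component $C$ that it touches. So $N(K)$ in general contains vertices of $S'_C$ for several different $C$, and $N(K)$ is a clique in $\torso_G(S')$. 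These edges join different subtrees of your combined forest. Your remark that neighbourhoods ``in the top part'' lie in $\{s,x\}$ says nothing about them. The star with annotated leaves from the introduction already shows how leaving a connecting hub out of $S'$ turns the torso into a clique. The inequality $\td(G,S)\le 2+\max_D \td(D,S\cap V(D))$ over components $D$ of $G-\{s,x\}$ is valid, but it only helps if \emph{every} such $D$ satisfies the hypothesis for $k-1$. That includes the component containing the rest of $B$ and everything attached to it, and nothing in your choice of $B$ ensures this.

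The strengthened hypothesis also does not close up. You root it at a vertex of $S$, but in the recursion the new root is the attachment vertex $x\in V(C)$, which need not lie in $S$. Adding $x$ to $S$ does not help. The returned model of $P_{k-1}$ may have $x$ as the only annotated vertex of its first branch set, and merging that branch set with $B$ gives only an $S$-rooted $P_{k-1}$. Allowing arbitrary roots instead breaks the natural recursion (delete the root, restart from its neighbours): it does not decrease $k$ whenever the root is outside $S$, so you pay $1$ for every non-$S$ vertex on the way to the next vertex of $S$. Avoiding exactly this cost is the whole difficulty behind the factor $2$. The sentence ``each step of the path costs at most $2$'' is where the proof would have to be, and you give no mechanism for it. You would need an explicit rule for which vertices outside $S$ enter $S'$; the branching vertices used in the proof of \cref{thm:obstructions-forests} are an example of such a rule. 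You would also need a rooted forest on $S'$ of height at most $2k-2$ whose closure contains $G[S']$ and in which the neighbourhood of every component of $G-S'$ lies on one root-to-leaf path. Minor points: a minimal connected subgraph containing $s$ is just $\{s\}$. Fixing an optimal $S'$ at the outset also goes the wrong way, since in the contrapositive you must construct $S'$.
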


As already mentioned, a drawback of characterizing classes of bounded treedepth by forbidding paths as minors (or equivalently subgraphs) is that they have logarithmic treedepth themselves.
Kawarabayashi and Rossman~\cite{Kawarabayashi2021} found obstructions for bounded treedepth that have polynomial treedepth.
The precise bounds were later improved by Czerwiński, Nadara, and Pilipczuk, see~\Cref{th:originalABC} below.
Recall that a \defin{full binary tree} is a rooted tree where each vertex has either two or no children and all leaves have the same depth.

\begin{theorem}[{\cite[Corollary 1.5]{ImprovedCNP}}]\label{th:originalABC}
    There exists an absolute constant $C$ such that for all positive integers $a$, $b$, and $c$, if a graph $G$ has treedepth at least $C \cdot abc$, then one of the following conditions hold:
    \begin{enumerate}
        \item $G$ has treewidth at least $a$,
        \item $G$ contains a subdivision of a full binary tree of vertex-height $b$ as a subgraph, or
        \item $G$ contains a path on $2^c$ vertices as a subgraph.
    \end{enumerate}
\end{theorem}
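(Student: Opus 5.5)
The plan is to prove the contrapositive with explicit constants, by induction on $b$. Assume $\tw(G)<a$, that $G$ contains no subdivision of a full binary tree of vertex-height $b$ as a subgraph, and that $G$ contains no path on $2^c$ vertices. The goal is to show $\td(G)\le C\cdot abc$, and the bound I aim for is roughly $\td(G)\le (a+1)(c+1)b$. Treedepth is the maximum over components, so we may assume $G$ is connected.

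\emph{Step 1: reduce to a single spine using a DFS tree.} Fix a DFS spanning tree $T$ of $G$ rooted at some vertex $r$. Every root-to-leaf path of $T$ is a path in $G$, so $T$ has vertex-height less than $2^c$. Every edge of $G$ joins a vertex to one of its ancestors in $T$, because DFS trees have no cross edges. Since $T\subseteq G$, $T$ contains no subdivision of a full binary tree of vertex-height $b$. For trees this is equivalent to bounded Strahler-type rank: define the rank of a rooted tree so that rank $\ge b$ is equivalent to containing such a subdivision. From a rank bound one extracts a root-to-leaf path $Q$ of $T$ (follow a child of maximum rank) such that every subtree of $T$ hanging off $Q$ has rank at most $b-1$. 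The ancestor property gives two facts. The vertex sets $V(T_i)$ of these hanging subtrees are unions of components of $G-V(Q)$, and no edge of $G$ joins two different $T_i$. Hence each component of $G-V(Q)$ lies inside a single $T_i$. Moreover, $G[V(T_i)]$ has $T_i$ as a DFS-type tree with the ancestor property. So $G[V(T_i)]$ still has treewidth less than $a$, contains no path on $2^c$ vertices, and has no binary-tree subdivision of vertex-height $b-1$ inside its tree $T_i$. This means the induction should be phrased on the rank of the DFS tree rather than on subgraph containment in $G$.

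\emph{Step 2: destroy the spine with balanced separators.} We have $|V(Q)|<2^c$. Use the standard fact that a graph of treewidth less than $a$ has, for any vertex weighting, a set of at most $a$ vertices whose removal leaves components of at most half the total weight. Apply this to the current component with weight $1$ on the remaining vertices of $Q$ and $0$ elsewhere. The removed vertices are placed as a chain of at most $a$ ancestors in the elimination forest. Then recurse on each resulting component, with weights given by the $Q$-vertices it still contains. After $c$ rounds every component contains no vertex of $Q$, and this costs depth at most $ac$ in total. By Step 1, each surviving component lies inside a single $G[V(T_i)]$, and its DFS structure still has rank at most $b-1$; restricting to a connected subgraph can be handled by re-running DFS inside it, which can only lower the rank parameters. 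By induction each such component has treedepth at most $(a+1)(c+1)(b-1)$, which gives $\td(G)\le ac+(a+1)(c+1)(b-1)\le (a+1)(c+1)b\le C\cdot abc$.

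\emph{Main obstacle.} The delicate point is keeping the induction hypothesis honest. After deleting separator vertices, a surviving component $G'\subseteq G[V(T_i)]$ need not be spanned by a subtree of $T_i$. So I would state the inductive claim for pairs $(G',T')$, where $T'$ is a rooted forest on $V(G')$ with the ancestor property (every edge of $G'$ joins ancestor and descendant), vertex-height less than $2^c$, and rank less than $b$. The restriction of $T_i$ to $V(G')$, taken with the induced ancestor order, still has these properties, and the rank can only drop. Once the induction is phrased for such pairs, Steps 1 and 2 go through verbatim: the spine is chosen in $T'$ and the separators are taken in $G'$.
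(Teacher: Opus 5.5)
Your argument is correct, and it takes a genuinely different route. The paper only cites this statement. Its own proof of the annotated analogue, \Cref{th:focusedABC}, follows the cited route in three steps. First, \Cref{lem:tw-or-large-tree-td-minor}: via the Helly property of tree decompositions, either $\tw\ge a$ or there is a tree minor of treedepth more than $\td/a$. Second, \Cref{lem:large-subcubic-subtree}: pass to a subcubic subtree, so that the tree minor becomes a subdivision in $G$. Third, \Cref{lem:tree-or-path}: a tree of treedepth more than $b(c+2)$ contains a subdivided full binary tree of height $b$ or a path on $2^c$ vertices.

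You bypass all three. A DFS tree is already a subgraph, so no subcubic reduction is needed. Its Strahler number encodes the binary-tree obstruction and its height encodes the path obstruction. The absence of cross edges means each Strahler level costs only $c$ rounds of $a$-vertex balanced separators along the heavy path. This is elementary and self-contained, and your recursion actually gives $\td(G)\le ac(b-1)$, so $C=1$ suffices.

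What the modular route buys is an intermediate statement: $G$ has a tree \emph{minor} of treedepth at least about $\td(G)/a$. This is of independent interest and, for this paper, transfers directly to $S$-rooted minors and torso treedepth (\Cref{lem:tw-or-large-tree-td-minor}). In that setting the obstructions are rooted minors rather than subgraphs, and no spanning tree of a torso sits inside $G$, so your DFS trick does not carry over as it stands.

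Some local points need repair.
\begin{itemize}
\item In Step~2, the remark that a surviving component $D$ can be handled by re-running DFS is false. A fresh DFS tree of $D$ only inherits rank at most $b-1$ from the absence of subdivisions in $G$, not the drop you need. For example, let $T_i$ be the path $v_1\cdots v_7$, let $G$ also contain $v_1v_3,v_1v_5,v_1v_7$, and suppose $v_2,v_4,v_6$ were deleted. Then $D\cong K_{1,3}$, every DFS tree of which branches, whereas $T_i$ has rank $1$.
\item The invariant in your last paragraph is the right one: the restriction of $T_i$ to $V(D)$ with the induced ancestor order. For ``the rank can only drop'', add a line of justification. For instance, the Strahler number is at least $r$ if and only if the full binary tree of vertex-height $r$ embeds into the ancestor order preserving both comparability and incomparability, and this property is inherited by suborders.
\item In Step~1 the hanging subtrees have rank strictly smaller than that of $T$, so at most $b-2$ in your normalisation, not $b-1$.
\item Only the implication ``rank at least $b$ implies a subdivision'' holds, not an equivalence: a path on three vertices rooted at an end has Strahler number $1$. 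That implication is the direction you use.
\end{itemize}
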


We derive an annotated variant of~\Cref{th:originalABC}.

\begin{theorem}\label{th:focusedABC}
    There exists an absolute constant $C$ such that for all positive integers $a$, $b$, and $c$, if an annotated graph $(G,S)$ satisfies $\td(G,S) \geq C \cdot abc$, then one of the following conditions hold:
    \begin{enumerate}
        \item $\tw(G,S) \geq a$, \label{item:tw}
        \item $G$ contains a full binary tree of vertex-height $b$ as an $S$-rooted minor, or \label{item:tree}
        \item $G$ contains a path on $2^c$ vertices as an $S$-rooted minor. \label{item:path}
    \end{enumerate}
\end{theorem}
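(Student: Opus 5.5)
Assume (i) fails, so $\tw(G,S) < a$. Fix $S'\supseteq S$ and a tree decomposition $\calT$ of $(G,S')$ of width less than $a$, as in \cref{obs:tw-decomposition}. Set $H=\torso_G(S')$. Then $\tw(H)<a$ and $\td(H)\ge \td(G,S)\ge C\cdot abc$. Applying \Cref{th:originalABC} to $H$, with $C$ replaced by a suitable multiple of the absolute constant, gives two cases. Either $H$ contains a subdivision of a full binary tree of vertex-height $b'$ as a subgraph, or $H$ contains a path on $2^{c'}$ vertices as a subgraph. Here $b'$ and $c'$ are linear in $b$ and $c$, with some slack built into $C$. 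The plan is to lift each of these structures to an $S$-rooted minor of $G$ of the same shape.

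The difficulty is that the structures live in $H$, whose vertices lie in $S'$ and not necessarily in $S$. Moreover, an edge of $H$ corresponds to a path through a component of $G-S'$, and different edges may reuse the same component. To fix this, first choose $S'$ well. Among all $S'\supseteq S$ admitting a decomposition of width less than $a$, take one minimizing $\td(\torso_G(S'))$. Alternatively, start from $S'$ and replace each component $C$ of $G-S'$ by a clique on $N(C)$. This changes nothing, since $H$ is defined exactly this way, and it lets us view $H$ as a minor-like object of $G$ in which each clique $N(C)$ has size at most $a$.

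Next, we need to realise a long path or a binary tree in $H$ as an $S$-rooted minor of $G$. I would argue via separators. If some subgraph $Q$ of $H$, either a path on $2^{c'}$ vertices or a subdivided binary tree, cannot be lifted, then we try to show that $\td(G,S)$ is smaller. Alternatively, we redo the Czerwiński–Nadara–Pilipczuk argument directly on annotated graphs. Their proof is a recursion that repeatedly deletes a bag-sized separator, costing about $a$ per step, and this recursion is compatible with torso treedepth. The decomposition $\calT$ provides balanced separators $W_x$ of size at most $a$ with respect to $S$. Removing such a separator from $S'$ and recursing increases torso treedepth additively by $a$, and the number of levels is controlled unless long structures appear. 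When the recursion fails to terminate within depth $bc$, we extract from the tree of recursion either a long chain or a branching structure. Each node is connected through a component containing an $S$-vertex, since otherwise that part contributes nothing to $\td(G,S)$ and could be dropped from $S'$. This gives an $S$-rooted path on $2^c$ vertices or an $S$-rooted full binary tree of height $b$. Vertices of $S'\setminus S$ are absorbed into adjacent branch sets, using the \cref{lem:tw:erdos:posa}-type Helly argument for disjointness.

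The main obstacle is ensuring that every branch set contains a vertex of $S$ while keeping branch sets disjoint. I would first try to enforce \emph{$S$-relevance}: take $S'$ minimal, so that every vertex of $S'\setminus S$ is necessary. With this choice, every component of $H-v$ meaningful for treedepth contains $S$. Then I would contract each subdivision vertex or non-$S$ vertex towards an $S$-vertex in its own private region of the decomposition.
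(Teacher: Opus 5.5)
\textbf{There is a genuine gap at the lifting step.} Applying \Cref{th:originalABC} to $H=\torso_G(S')$ as a black box gives you \emph{some} long path or subdivided binary tree in $H$. You have no control over how it meets $S$, and such a structure need not lift to an $S$-rooted minor of $G$ of the same shape.

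\begin{itemize}
\item For a simple instance, let $G$ be a long path, $S$ its two endpoints and $S'=V(G)$. Then $H=G$ is a long path of treewidth $1$, yet $G$ has no $S$-rooted path on three vertices.
\item Choosing $S'$ to minimise $\td(\torso_G(S'))$ does not repair this. Let $P$ be a path on $2^d-1$ vertices with $S=V(P)$, and attach at the middle vertex of $P$ a pendant path on $2^{d-2}$ vertices outside $S$. Then $S'=V(G)$ is a minimiser, since $\td(G)$ and $\td(G,S)$ both equal $d$. \Cref{th:originalABC} is free to return a path lying entirely in the pendant part.
\item You also never show that inclusion-minimality of $S'$ gives the required control.
\end{itemize}

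The underlying point is that a large structure in one particular torso does not certify that \emph{every} torso has large treedepth, whereas an $S$-rooted minor does. Your fallback, redoing the Czerwi\'nski--Nadara--Pilipczuk recursion on annotated graphs, is where the real proof would have to be. But its decisive claim is only asserted: that the recursion stops after $\mathcal{O}(bc)$ rounds of deleting a bag unless an $S$-rooted long path or binary tree appears. Deleting balanced separators alone bounds the number of rounds only by roughly $\log|S|$. Likewise, ``absorbing $S'\setminus S$ via a Helly argument'' is not yet an argument.

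\textbf{The missing idea is an intermediate object that lifts for free:} an $S$-rooted model of a tree $T$ with $\td(T)>d$. The paper proves (\Cref{lem:tw-or-large-tree-td-minor}) that $\td(G,S)>ad$ forces either $\tw(G,S)\ge a$ or such a model, by induction on $d$.
\begin{itemize}
\item For connected $G$, the connected subgraphs carrying an $S$-rooted model of a tree of treedepth exactly $d$ pairwise intersect. Two disjoint ones could be joined along a path of $G$ into an $S$-rooted model of a tree of treedepth greater than $d$.
\item Hence \Cref{lem:tw:erdos:posa} with $k=2$ gives one bag $W_t$, of a tree decomposition of $(G,S)$ of width less than $a$, meeting all of them.
\item Then $\td(G,S)\le |W_t|+\td(G-W_t,S\setminus W_t)\le a+a(d-1)$.
\end{itemize}
From there on everything happens inside the abstract tree $T$. \Cref{lem:large-subcubic-subtree} and \Cref{lem:tree-or-path} yield a subdivided full binary tree of vertex-height $b$, or a path on $2^c$ vertices, as a subgraph of $T$. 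Any subgraph of $T$, after merging subdivision vertices into neighbouring branch sets, is automatically an $S$-rooted minor of $G$, because every branch set of the model of $T$ already contains a vertex of $S$. This sidesteps both of your obstacles, the vertices of $S'\setminus S$ and the reuse of components of $G-S'$, since no torso is ever lifted.
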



\begin{lemma} [based on {\cite[Theorem 4.1]{ImprovedCNP}}] \label{lem:tw-or-large-tree-td-minor}
    Let $(G,S)$ be an annotated graph, and let $a$ and $d$ be positive integers.
    If $\td(G,S) > ad$, then either $\tw(G,S) \geq a$ or $G$ contains an $S$-rooted model of a tree of treedepth greater than $d$. 
\end{lemma}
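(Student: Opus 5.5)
We argue by contraposition: assume $\tw(G,S) < a$ and aim to find an $S$-rooted model of a tree of treedepth greater than $d$. We follow the proof of \cite[Theorem 4.1]{ImprovedCNP} (bounded treewidth plus large treedepth forces a tree minor of large treedepth) and lift it to the annotated setting. By \cref{obs:tw-decomposition}, fix $S' \supseteq S$ and a tree decomposition $\calT=\big(T,(W_x \mid x \in V(T))\big)$ of $(G,S')$ of width less than $a$. Let $H=\torso_G(S')$. By the definition of the torso parameter, $\td(H) \geq \td(G,S) > ad$.

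\textbf{Step 1: reduce to a decomposition tree.} Root $T$. For each node $x$, let $U_x$ be the set of vertices of $S'$ whose topmost bag is $W_x$, so that $|U_x| \leq a$. Discard the nodes with $U_x=\emptyset$ by letting the parent of a node be its nearest ancestor with nonempty $U$. This gives a rooted tree $T''$, which is a topological minor of $T$. Eliminating $U_x$ along $T''$, following an optimal elimination forest of $T''$ and replacing each node by the at most $a$ vertices of $U_x$, yields an elimination forest of $H$. Hence $\td(H) \leq a\cdot \td(T'')$, and therefore $\td(T'')>d$.

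\textbf{Step 2: normalize the decomposition.} Before Step 1, refine $\calT$ in the standard way, by splitting any node whose subtree part is disconnected. After this, for every node $x$, the union $D_x$ of the sets $U_y$ over the descendants $y$ of $x$ induces a connected subgraph of $H$, and every $y \in D_x$ attaches to a vertex of $W_x$. We must check that the refinement keeps the clique condition for components of $G-S'$; this holds because each such component has its neighbourhood inside a single bag.

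\textbf{Step 3: build the model.} We construct a model of $T''$ in $H$ greedily from the leaves upward. Each node $x$ receives a branch set consisting of $U_x$ together with connecting vertices taken from its own part, so that it is adjacent to the branch sets of its children. The connectivity from Step 2 makes this possible. Since $H$ is a torso, every model in $H$ lifts to a model in $G$: replace each torso edge by a path through a component of $G-S'$, and add that component to one of the two branch sets. Distinct torso edges using the same component lie in a common bag, so they can be handled consistently.

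\textbf{Main obstacle: rootedness.} The model must be $S$-rooted, not merely $S'$-rooted, and vertices of $S'\setminus S$ must not be the only vertices of a branch set. We handle this by first choosing $S'$ minimal among the sets achieving $\tw(\torso_G(S'))<a$, and then contracting every node of $T''$ whose $U_x$ avoids $S$ into a neighbour. We then need to show two things. First, every vertex of $S'\setminus S$ is forced, in the sense that removing it would create a large clique; this should allow the corresponding node to be merged while preserving the bound $|U_x|\le a$ up to a constant. Second, contraction of a tree preserves the treedepth estimate of Step 1. If this merging argument fails, the fallback is to apply \cite[Theorem 4.1]{ImprovedCNP} directly to $H$ and to prove separately that tree minors in $H$ whose leaves lie in $S$ can be rerouted into $S$-rooted models.
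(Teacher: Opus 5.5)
\textbf{Gap 1: Step~3 is false.} Everything rests on Step~3, and a normalized decomposition tree need not be a minor of $H$. The branch set of a node must be connected, while $U_x$ (for instance the root bag) may be spread out. Connecting it uses up vertices and leaving edges that the subtrees below need.

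Here is a concrete counterexample. Take $S=S'=V(G)$, and let $G=H$ be the tree formed by a path $u x_1\cdots x_s w$ with $s\geq 1$, plus $m\geq 1$ pendant leaves $p_i$ at $u$ and $q_i$ at $w$. Use the root bag $\{u,w\}$ with children $\{u,p_i\}$ and $\{w,q_i\}$, together with a chain $\{u,w,x_1\},\{x_1,w,x_2\},\dots,\{x_{s-1},w,x_s\}$. This decomposition has the following properties:
\begin{itemize}
\item it has width $2$;
\item every part $D_x$ is connected, and every adhesion vertex has a neighbour in its part;
\item every $U_x$ is nonempty, so $T''=T$ and its root has $2m+1$ children.
\end{itemize}
Taking $s$ large, $\td(G,S)$ is as large as you like, so the lemma's hypothesis can hold. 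Yet every connected vertex set of this tree has at most $2m$ edges leaving it. So no minor of $H$ has a vertex of degree $2m+1$, and $T''$ is not a minor of $H$. With two internally disjoint $u$--$w$ paths in place of one, the same happens for a decomposition of optimal width.

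\textbf{The lifting claim in Step~3 is also false.} Take $G=K_{1,4}$ with $S'$ its set of leaves. Then $H=K_4$ contains a $4$-vertex path with singleton branch sets in $S'$. But $G$ has no $S'$-rooted model of that path, because only the branch set containing the centre can touch more than one other branch set. So nothing in your proposal turns $\td(T'')>d$ into a tree minor of $G$.

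\textbf{Gap 2: $S$-rootedness is never established.} Your final paragraph only lists what would have to be shown, plus a fallback:
\begin{itemize}
\item that vertices of $S'\setminus S$ are ``forced'';
\item that merging keeps $|U_x|=O(a)$;
\item that contracting $T''$ keeps the treedepth estimate. This last one is doubtful, since contracting tree edges can lower treedepth arbitrarily.
\end{itemize}

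\textbf{A smaller point in Step~1.} Substituting $U_x$ for each node of an elimination forest of $T''$ does not give an elimination forest of $H$. An edge between $U_x$ and $U_z$, with $z$ a non-parent ancestor of $x$, can join incomparable vertices. The bound $\td(H)\le a\cdot\td(T'')$ still holds if you recursively delete the \emph{whole} bag of the current root of the elimination forest.

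\textbf{How the paper avoids all this.} It never builds a model from the decomposition. It inducts on $d$:
\begin{enumerate}
\item Let $\calB$ be the family of connected subgraphs of $G$ containing an $S$-rooted model of a tree of treedepth exactly $d$.
\item Two disjoint members could be joined along a path into an $S$-rooted model of a tree of treedepth at least $d+1$. So the members pairwise intersect.
\item \cref{lem:tw:erdos:posa} with $k=2$ then gives a single bag $W_t$ meeting all of them.
\item Hence $G-W_t$ has no $S$-rooted model of a tree of treedepth at least $d$, and induction gives $\td(G,S)\le |W_t|+a(d-1)\le ad$.
\end{enumerate}
Rootedness comes for free: $\calB$ is defined through $S$-rooted models, and the Helly-type lemma only needs each member to meet $S$.
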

\begin{proof}
    By contraposition, we prove that if $\tw(G, S) < a$ and there is no $S$-rooted model of a tree $T$ in $G$ with $\td(T ) > d$, then $\td(G, S) \leq ad$.
    The proof is by induction on $d$.
    If $d = 1$, then the fact that there is no $S$-rooted model of a tree $T$ in $G$ with $\td(T) > 1$ implies that every component of $G$ has at most one vertex in $S$.
    This implies that $\td(G,S) \leq 1$, as desired.

    Thus, we assume that $d > 1$ and without loss of generality, we also assume that $G$ is connected.
    Let $\calB$ be the collection of all connected subgraphs of $G$ that contain an $S$-rooted model of a tree of treedepth exactly $d$.
    
    We claim that each two distinct $B_1$ and $B_2$ in $\calB$ intersect.
    Suppose to the contrary that some distinct $B_1$ and $B_2$ in $\calB$ are disjoint.
    For each $i \in [2]$, let $\calM_i$ be an $(S\cap V(B_i))$-rooted model of a tree $T_i$ with $\td(T_i) = d$.
    Since $G$ is connected, and $B_1$ and $B_2$ are disjoint, by extending one of the branch sets of $\calM_1$ and combining with $\calM_2$, we obtain an $S$-rooted model $\calM$ of a tree $T$, which is obtained from the disjoint union of $T_1$ and $T_2$ by adding an edge between an appropriate vertex of $T_1$ and an appropriate vertex of $T_2$.
    Observe that $T$ has treedepth at least $d+1$.
    This contradicts the assumption, and proves that each two distinct $B_1$ and $B_2$ in $\calB$ intersect.

    Let $\big(T,(W_x\mid x\in V(T))\big)$ be a tree decomposition of $(G,S)$ of width less than $a$.
    By~\Cref{lem:tw:erdos:posa} for $k=2$, there exists $t \in V(T)$ such that $W_t \cap V(B) \neq \emptyset$ for every $B \in \calB$.
    Therefore, by the definition of $\calB$, every tree $T'$ such that $G - W_t$ contains an $S$-rooted model of $T'$ satisfies $\td(T') < d$.
    By the inductive hypothesis, the treedepth of $(G - W_t, S\setminus W_t)$ is at most $a \cdot (d-1)$.
    Finally,
    \[\td(G,S) \leq \td(G - W_t,S\backslash W_t) + |W_t| \leq a\cdot (d-1) + a \leq ad. \qedhere\]
\end{proof}

The proof of \cref{th:focusedABC} makes also use of the following two results.

\begin{lemma} [{\cite[Lemma 1.6]{ImprovedCNP}}] \label{lem:large-subcubic-subtree}
    There exists an absolute constant $D > 0.43$ such that for every positive integer $d$, every tree of treedepth more than $d$ contains a subcubic subtree of treedepth more than $Dd$.
\end{lemma}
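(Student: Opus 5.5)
The plan is to prove a stronger statement about rooted trees by induction, pruning the tree so that every vertex keeps its parent and at most two children. Root $T$ at an arbitrary vertex $r$. For a vertex $v$, let $T_v$ be the subtree of descendants of $v$. A subcubic subtree containing $r$ arises exactly by choosing, for each kept vertex, at most two kept children. The work is to choose children so that treedepth drops by at most a constant factor.

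The first step is a lower-bound tool for treedepth of trees. If a tree $T'$ contains two disjoint subtrees $A_1,A_2$ with $\td(A_1),\td(A_2)\ge k$, then $\td(T')\ge k+1$. Indeed, deleting the root of an optimal elimination forest leaves one of $A_1,A_2$ intact, and minor-monotonicity applies. I would also use the standard recursive characterization: $\td(T')>d$ if and only if for every vertex $v$ some component of $T'-v$ has treedepth at least $d$. Iterating the two-disjoint-copies argument gives a \q{branching certificate}: a subcubic tree built by repeatedly joining two certified subtrees of equal certificate value $k$ has treedepth at least $k+1$. This is what the pruning must preserve.

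The second step is to handle the two mechanisms separately. \emph{Branching}: whenever a vertex $v$ has at least two children $u_1,u_2$ with $\td(T_{u_1}),\td(T_{u_2})\ge k$, keeping just these two children (recursively pruned) already certifies value $k+1$. Thus any number of extra children may be discarded, since a star of many td-$k$ branches has treedepth only $k+1$. \emph{Length}: treedepth can also grow along a long path with side branches, as in a path on $2^c$ vertices having treedepth about $c$. Here no vertex has two heavy children, so the pruning must keep the spine of heavy children together with, at each spine vertex, the single heaviest side branch. I would decompose $T$ into heavy paths, where each vertex points to its child of largest subtree treedepth. Each heavy path together with its pruned side branches is subcubic: a spine vertex keeps its parent, its spine successor and one side child.

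The main obstacle is the length mechanism. The claim to prove there is quantitative: if a spine $P$ with side branches of treedepths $b_1,b_2,\dots$ has total treedepth $>d$, then keeping only one side branch per spine vertex (pruned inductively to a fraction $D$ of its treedepth) still gives treedepth $>Dd$. I would prove this by induction on $d$ using the characterization via $T'-v$. Deleting a vertex splits the spine into two halves, and one half still has large treedepth. This yields a recurrence of the form $f(d)\ge \min\{f(d-1)+1,\ \ldots\}$, combining halving of the spine (cost $1$ in treedepth per halving, as for paths) with descending into a side branch (cost factor $D$). I expect the constant $D>0.43$ to come from optimizing this recurrence, balancing how much treedepth is attributed to spine length against how much to side branches. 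Getting a clean invariant that interleaves the two mechanisms is where the real difficulty lies.
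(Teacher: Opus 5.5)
\textbf{There is a genuine gap.} Your preliminary facts are correct: the two-disjoint-subtrees bound, the characterization via $T'-v$, and discarding surplus children at a vertex with two children of maximum treedepth. But the whole content of the lemma lies in the ``length mechanism'', and there you only announce a plan. The quantitative claim about a spine with pendant branches is not proved, the recurrence is left as $\min\{f(d-1)+1,\ldots\}$, no invariant is given, and $0.43$ is never derived, so the proposal does not yet establish any constant $D>0$. (The paper does not prove the lemma either; it imports it from \cite{ImprovedCNP}.)

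This step is not a routine calculation, for two reasons. First, the treedepth of a spine with pendant branches is not determined by the spine length and the treedepths of the branches you keep. Second, your pruning loses a constant fraction along spines even where no halving or descent happens. Consider the binomial tree $B_m$, formed from two copies of $B_{m-1}$ joined by an edge between their roots, so $\td(B_m)=m+1$. On its heavy path $x_m,\dots,x_0$, the children of each spine vertex have pairwise distinct treedepths, and the spine alone has treedepth $O(\log m)$. Yet each $x_j$ raises the treedepth by one, through the joint effect of its side branches $B_{j-2},\dots,B_0$. Keeping only the heaviest side branch produces a Fibonacci tree with $O(\phi^m)$ vertices, where $\phi=\frac{1+\sqrt5}{2}$, hence of treedepth at most $m\log_2\phi+O(1)$. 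Heavy paths also nest to depth $\Theta(d)$ here, so any comparison that loses an additive constant per nesting level fails. An invariant surviving both effects is exactly what is missing.

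\textbf{A way to close the gap.} Stop tracking how treedepth composes and count vertices instead.
\begin{enumerate}
\item Suppose $\td(T)\ge d\ge 2$. Let each vertex $v$ point to a neighbour whose component of $T-v$ has treedepth at least $d-1$. Some two adjacent vertices point at each other, so some edge $xy$ splits $T$ into two subtrees of treedepth at least $d-1$.
\item Root $T$ at any $r$, with $y$ a child of $x$. Claim: by induction on $|V(T)|$, $T$ has a parent-closed subtree containing $r$, in which every vertex has at most two children, with at least $F_d$ vertices (where $F_1=F_2=1$).
\begin{itemize}
\item Take such a subtree of $T_y$; it has at least $F_{d-1}$ vertices. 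Join it to $r$ along the $r$--$x$ path.
\item At each path vertex, hang one branch of $T-V(T_y)$ of maximum treedepth $h_i$; it contributes at least $F_{h_i}$ vertices.
\item Deleting a path vertex splits $T-V(T_y)$ into the two path sides and branches of treedepth at most $h_i$. A weighted-median induction along the path, with weights $1+F_{h_i}$, therefore shows these contribute at least $F_{\td(T-V(T_y))-1}\ge F_{d-2}$.
\end{itemize}
\item Deleting a vertex of a subcubic tree leaves at most three components. So a subcubic tree of treedepth $t$ has at most $(3^t-1)/2$ vertices.
\end{enumerate}
Together, a tree of treedepth more than $d$ contains a subcubic subtree of treedepth at least $\log_3(2F_{d+1}+1)>d\log_3\phi\approx0.438\,d$, which proves the lemma.
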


\begin{lemma}[{\cite[Lemma 5.10]{Kawarabayashi2021}}] \label{lem:tree-or-path}
    For all positive integers $b$ and $c$, every tree with $\td(T) > b(c+2)$ either contains a subdivision of a full binary tree of vertex-height $b$ as a subgraph or contains a path on $2^c$ vertices as a subgraph.
\end{lemma}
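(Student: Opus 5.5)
We prove the contrapositive by induction on $b$: if $T$ is a tree containing neither a subdivision of a full binary tree of vertex-height $b$ nor a path on $2^c$ vertices, then $\td(T)\le b(c+2)$. The one extra fact we need is that a path on fewer than $2^c$ vertices has treedepth at most $c$, since $\td(P_n)=\lceil\log_2(n+1)\rceil$. The base case $b=1$ is trivial: every nonnull tree contains a one-vertex subtree, which is a full binary tree of vertex-height $1$.

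For the inductive step, set $t=(b-1)(c+2)$ and call a subtree \emph{heavy} if its treedepth exceeds $t$. For each vertex $v$ of $T$, consider the components of $T-v$. Suppose first that some vertex $v$ has two heavy components $C_1,C_2$ of $T-v$. By induction, each $C_i$ contains a subdivision $Y_i$ of a full binary tree of vertex-height $b-1$. We join $v$ to the root of $Y_1$ and to the root of $Y_2$ by paths inside $\{v\}\cup C_1$ and $\{v\}\cup C_2$, respectively. This gives a subdivision of a full binary tree of vertex-height $b$ rooted at $v$, a contradiction.

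To make this joining legitimate, the connecting path from the neighbour $w_i$ of $v$ in $C_i$ must meet $Y_i$ only at its root. So we run the induction with a stronger, rooted hypothesis: for every vertex $w$ of a tree of treedepth more than $b(c+2)$ with no path on $2^c$ vertices, there is such a subdivision $Y$ whose root $y$ satisfies that the $w$--$y$ path meets $Y$ only in $y$. In the step we apply this with $w=w_i$. \textbf{Checking that this rooted version is really inherited} is the main obstacle I expect. In particular, we must choose $v$ so that the component of $T-v$ containing the prescribed vertex $w$ is not one of the two heavy components used. I would handle this by taking $v$ to be the branching vertex closest to $w$, and if necessary by sacrificing one level of height, which the slack in $c+2$ should absorb.

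Otherwise, every vertex has at most one heavy component of $T-v$. Orient each vertex towards its heavy component. We claim the vertices having a heavy component of $T-v$ that does not contain any other vertex of this kind lie on a single path $P$. More precisely, the set $Q$ of vertices $v$ such that $T-v$ has no heavy component together with the pointing structure forces the ``heavy core'' to be a path: if it branched, the branch vertex would have two heavy sides. Concretely, let $P$ be a minimal subpath such that every component of $T-V(P)$ has treedepth at most $t$. Such a $P$ exists, and it is a path, by following the heavy direction from one end of the core to the other. Then $P$ has fewer than $2^c$ vertices, so
\[
\td(T)\le \td(P)+t\le c+(b-1)(c+2)<b(c+2).
\]
Here the bound $\td(T)\le\td(P)+t$ follows by placing an optimal elimination tree of $P$ on top and hanging below it elimination forests of the components of $T-V(P)$, each of vertex-height at most $t$. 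This contradicts the assumption on $\td(T)$ and completes the induction.
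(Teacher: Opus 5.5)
There is a genuine gap in your first case, and it carries the whole difficulty rather than a technicality. To glue at $v$, each heavy component $C_i$ must contain a subdivided full binary tree of vertex-height $b-1$ that the path from $v$ enters at its root. You correctly see that this needs the rooted strengthening, but you never prove it, and neither proposed repair works.

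\emph{Closest branching vertex.} The nearest vertex $v$ with two heavy components can have exactly two of them, with $w$ inside one. For example, take $v$ of degree $2$ adjacent to $w$, both sides of $v$ heavy, and every component of $T-v$ containing $w$ becoming light once $w$ is deleted. Then $v$ is unusable, and your case analysis gives no indication where a usable vertex lies.

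\emph{Sacrificing a level.} The slack is only $b(c+2)-(c+t)=2$ units of treedepth. Getting a properly rooted tree of height $b-1$ inside $C_i$ from the unrooted statement would require $\td(C_i)>b(c+2)$ rather than $\td(C_i)>t$, which costs $c+2$ units. Losing a level at every step simply stalls the induction.

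Your second case also shows the dichotomy is the wrong one, because it is vacuous. If every vertex has at most one heavy component of $T-v$, either some vertex has none, giving $\td(T)\le t+1$. Or following the heavy directions ends at an edge $uv$ whose endpoints point at each other; then every component of $T-\{u,v\}$ is light and $\td(T)\le t+2<b(c+2)$. Neither needs the path hypothesis. Long paths contain no subdivided full binary tree of vertex-height $3$, so that hypothesis is indispensable, and all its force would have to enter through the rooted strengthening you left open.

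The missing idea is to measure heaviness relative to $w$ from the start. Root $T$ at $w$, write $T_u$ for the subtree of descendants of $u$, and let $H=\{u\mid \td(T_u)>t\}$. Then $H$ contains $w$ and is closed under taking parents.

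\emph{Some $u\in H$ has two children $u_1,u_2\in H$.} Equivalently, $T-u$ has two heavy components avoiding $w$, which is exactly the vertex your plan needs. Apply the rooted hypothesis to $(T_{u_1},u_1)$ and $(T_{u_2},u_2)$. The $u$--$y_i$ paths pass through $u_i$, stay inside $T_{u_i}$, and meet $Y_i$ only in $y_i$. This gives a subdivided full binary tree of vertex-height $b$ rooted at $u$, and $u$ is its vertex closest to $w$ because everything lies in $T_u$.

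\emph{Otherwise.} $H$ is a vertical path $P$ starting at $w$, so it has fewer than $2^c$ vertices. Every component of $T-V(P)$ is some $T_x$ with $x\notin H$, attached to $P$ only at the parent of $x$. Hang its elimination forest, of height at most $t$, below a leaf of an optimal elimination tree of $P$ lying under that parent. This gives $\td(T)\le c+t<b(c+2)$, a contradiction.

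This is the standard Strahler-number argument. It proves the rooted statement, and hence the lemma, with no loss of height, and it is here, not in your second case, that the path hypothesis is genuinely used. (The paper does not reprove this lemma; it imports it from Kawarabayashi and Rossman.)
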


\begin{proof}[Proof of~\Cref{th:focusedABC}]
    Let $D$ be the constant from~\Cref{lem:large-subcubic-subtree}.
    We show that if an annotated graph $(G,S)$ has treedepth more than $\lceil\frac{1}{D}b(c+2)\rceil a$, then one of \ref{item:tw}, \ref{item:tree}, and \ref{item:path} holds.
    This implies the statement with $C = 3\cdot \frac{1}{D} + 1$.
    With  $d = \lceil\frac{1}{D}b(c+2)\rceil$, by \Cref{lem:tw-or-large-tree-td-minor}, $\td(G,S) > da$ implies that either $\tw(G,S) \geq a$ or $G$ contains an $S$-rooted model of a tree $T$ in $G$ with $\td(T) > d$. 
    If the former holds, then we obtain~\ref{item:tw}, hence assume the latter.
    By~\Cref{lem:large-subcubic-subtree}, $T$ contains a subcubic subtree $T'$ with $\td(T') > Dd \geq b(c+2)$.
    By~\Cref{lem:tree-or-path}, $T'$ contains either a subdivision of a full binary tree of vertex-height $b$ as a subgraph or a path on $2^c$ vertices as a subgraph.
    Since $G$ contains an $S$-rooted model of $T$, $G$ contains either an $S$-rooted model of a full binary tree of vertex-height $b$, or an $S$-rooted model of a path on $2^c$ vertices.
    In the former case, \ref{item:tree} holds, and in the latter case, \ref{item:path} holds, which ends the proof.
\end{proof}

\subsection{\texorpdfstring{$2$}{2}-treedepth}

We next give the definition of $2$-treedepth, which was introduced by Huynh, Joret, Micek, Seweryn, and Wollan~\cite{HJMSW22}.
The \defin{$2$-treedepth} of a graph $G$, denoted by \defin{$\td_2(G)$}, is defined recursively as
\begin{enumerate}[label={\normalfont (t\arabic*)}]
    \item $\td_2(G) = 0$ if $G$ is the null graph;
    \item $\td_2(G) = \max_{B} \td_2(B)$ over all the blocks\footnote{A \defin{block} of a graph $G$ is a maximal subgraph of $G$ which is either $2$-connected, a single edge, or an isolated vertex.} $B$ of $G$, if $G$ has more than one block; and
    \item $\td_2(G) = 1 + \min_{u \in V(G)} \td_2(G-u)$ if $G$ has exactly one block.
\end{enumerate}
The following simple observation follows from studying the definitions.

\begin{obs}\label{obs:td2}
    For every annotated graph $(G,S)$ with $S \neq \emptyset$ and $2$-connected $G$, we have
    \[\td_2(G,S) = 1 + \min_{u \in V(G)} \td_2(G -\{u\}, S \setminus \{u\}).\]
\end{obs}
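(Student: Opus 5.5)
The plan is to prove both inequalities directly from the recursive definition of $\td_2$, using two structural facts about torsos. First, for every $S' \subset V(G)$ and every $w \in S'$,
\[\torso_G(S') - w = \torso_{G-w}(S' \setminus \{w\}).\]
This holds because a \stpath{x}{y} in $G$ with internal vertices outside $S'$ never uses $w$ as an internal vertex. Conversely, any such path in $G-w$ with internal vertices outside $S'\setminus\{w\}$ also avoids $w$, so its internal vertices lie outside $S'$. Second, if $G$ is $2$-connected and $S' \neq \emptyset$, then $\torso_G(S')$ has exactly one block.

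For the second fact, the cases $|S'| \le 2$ are immediate: a single vertex, or two vertices joined by an edge because $G$ is connected. For $|S'|\ge 3$, the torso of a connected graph on a nonempty set is connected, since projecting a path onto $S'$ gives a walk in the torso. For every $w \in S'$, the graph $G-w$ is connected, so by the identity above $\torso_G(S')-w$ is connected. Hence $\torso_G(S')$ is $2$-connected. I expect this step, namely that the torso inherits $2$-connectivity and so falls under rule (t3), to be the only point needing care. The rest is bookkeeping.

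For the inequality $\td_2(G,S) \le 1 + \min_u \td_2(G-u,S\setminus\{u\})$, fix any $u \in V(G)$. Choose $S'' \supseteq S\setminus\{u\}$ in $V(G-u)$ attaining $\td_2(G-u,S\setminus\{u\})$, and put $S' = S''\cup\{u\} \supseteq S$. Since $S' \ne \emptyset$, the torso $\torso_G(S')$ has one block. Rule (t3), taking the vertex $u$, then gives
\[\td_2(\torso_G(S')) \le 1 + \td_2(\torso_G(S')-u) = 1+\td_2(\torso_{G-u}(S'')).\]

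For the reverse inequality, take $S' \supseteq S$ attaining $\td_2(G,S)$. Then $S'\neq\emptyset$ because $S \neq \emptyset$, so the torso has one block. By (t3) there is $v\in S'$ with $\td_2(\torso_G(S')) = 1+\td_2(\torso_G(S')-v)$. By the identity, $\torso_G(S')-v = \torso_{G-v}(S'\setminus\{v\})$, and $S'\setminus\{v\}$ contains $S\setminus\{v\}$. Therefore $\td_2(G-v,S\setminus\{v\}) \le \td_2(G,S)-1$, which gives the claimed minimum bound. In the degenerate case $S'=\{v\}$, both sides equal $1$, since $\td_2$ of the null graph is $0$.
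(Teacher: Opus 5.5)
Your proof is correct. The identity $\torso_G(S')-w=\torso_{G-w}(S'\setminus\{w\})$ for $w\in S'$, and the resulting fact that $\torso_G(S')$ has exactly one block whenever $G$ is $2$-connected and $S'\neq\emptyset$, are exactly what is needed to apply rule (t3) in both directions. The paper states this observation without proof, as following from the definitions, and your argument is that definition-chasing written out carefully.
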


For every positive integer $k$, let
\defin{$L_k$} be the $2 \times k$ grid,
and let \defin{$T_k$} be the vertex set of its first row.
See \cref{fig:Lk}.
It is easy to  see that $\td_2(L_{2^\ell}, T_{2^\ell}) \geq \ell$:
by induction, this is clear for $\ell=1$,
and for $\ell \geq 2$, by \cref{obs:td2},
there is a vertex $u$ such that $\td_2(L_{2^\ell}-u, T_{2^\ell} \setminus \{u\}) + 1 = \td_2(L_{2^\ell}, T_{2^\ell})$.
Since $(L_{2^{\ell-1}}, T_{2^{\ell-1}})$ is a minor of $(L_{2^\ell}-u, T_{2^\ell} \setminus \{u\})$,
we deduce $\td_2(L_{2^\ell}, T_{2^\ell}) \geq \td_2(L_{2^{\ell-1}}, T_{2^{\ell-1}}) + 1$.
This proves that the family $(L_k,T_k)$ for $k \in \mathbb{N}$ has unbounded $2$-treedepth.

\begin{figure}[tp]
        \centering
        \begin{tikzpicture}[every node/.style={draw, circle, fill, minimum size = 4pt, inner sep = 0pt}, scale=1]
            \foreach \x in {0,...,5}{
                 \node[rectangle, red] (u\x) at (\x,1) {};
                 \node (v\x) at (\x,0) {};
                 \draw (u\x) -- (v\x);
             }
             \foreach \x [evaluate=\x as \xx using {int(\x+1)}] in {0,...,4}{
                  \draw (u\x) -- (u\xx);
                  \draw (v\x) -- (v\xx);
             }
        \end{tikzpicture}
        \caption{The annotated graph $(L_6,T_6)$. 
            Vertices in $T_6$ are depicted in red.}
        \label{fig:Lk}
\end{figure}



\begin{theorem}\label{thm:rooted:tdtwo:ladder}
    There exists a function $f$ such that for every annotated graph $(G,S)$ and for every positive integer $k$, if $\tdtwo(G,S) \geq f(k)$, then $(G,S)$ contains a model of $(L_k,T_k)$.
\end{theorem}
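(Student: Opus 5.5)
We follow the strategy used for $\fv$ in \Cref{thm:fv_EP}: we split according to whether $\tw(G,S)$ is large. Note that $L_k$ is a plane graph whose vertices all lie on the outer face; in particular $T_k$ lies on the outer face. Set $g(k) = f_{\ref{thm:rooted:gridminor}}(2k)$. If $\tw(G,S) \geq g(k)$, then \Cref{thm:rooted:gridminor} gives a model of $(L_k,T_k)$ directly. So the real work is the case $\tw(G,S) < g(k)$. There we prove, by induction on $k$, that $\td_2(G,S)$ is bounded by some $h(k, g(k))$ whenever $(G,S)$ has no $(L_k,T_k)$ minor. The theorem then follows with $f(k) = h(k,g(k))+g(k)$.

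For the bounded torso treewidth case, we first reduce to blocks. The torso $2$-treedepth should behave well with respect to the block structure of $G$. If $G$ is not $2$-connected, each $S$-relevant block $B$ is considered with the annotation $S_B$: the vertices of $S \cap V(B)$ together with the cut vertices of $B$ whose far side contains vertices of $S$. The first step is a lemma saying that $\td_2(G,S) \le \max_B \td_2(B,S_B)$, and that every $(L_k,T_k)$ minor of $(B,S_B)$ lifts to one of $(G,S)$. The lifting works by extending a branch set through a cut vertex into the far side to pick up a vertex of $S$; the branch sets of a model of $L_k$ remain disjoint because distinct cut vertices lead to disjoint far sides. With this we may assume $G$ is $2$-connected, so \Cref{obs:td2} lets us delete vertices one at a time at unit cost each.

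In the $2$-connected case, we look for a bounded set $Z$ whose removal destroys all large partial ladders. Let $\calH$ be the family of connected subgraphs of $G$ that contain an $S$-rooted model of $(L_{k'},T_{k'})$, where $k' < k$ is chosen so that two disjoint such subgraphs can be glued into an $(L_k,T_k)$ model. For $k' = \lfloor (k-1)/2\rfloor$, the gluing needs two vertex-disjoint paths joining the two ladders, one from the top row of the first to the top row of the second and one joining the bottom rows. By Menger's theorem, $2$-connectivity supplies two disjoint paths between the two subgraphs. The work lies in rerouting them to attach at the ends of the ladders; this may cost a constant factor in $k'$, which is harmless. If no two members of $\calH$ are disjoint, \Cref{lem:tw:erdos:posa} with $k=2$ applied to a tree decomposition of width less than $g(k)$ gives a single bag $W_t$ meeting every member of $\calH$. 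Then $(G - W_t, S \setminus W_t)$ has no $(L_{k'},T_{k'})$ minor. Applying \Cref{obs:td2} $|W_t|$ times together with the block reduction gives
\[\td_2(G,S) \le g(k) + \td_2(G-W_t,S\setminus W_t) \le g(k) + h(k',g(k)).\]
Induction then bounds $\td_2(G,S)$, with $h$ growing roughly like $g(k)\log k$. Along the way we should also check that the torso treewidth of $G - W_t$ is still below $g(k)$, which holds by \Cref{obs:monotonicity}.

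The main obstacle is the gluing step: turning two disjoint rooted ladder models in a $2$-connected graph into one longer rooted ladder. The two Menger paths may attach anywhere on the ladder models, and both may land on the top rows or both on the bottom rows. To handle this, I would first try to strengthen the induction hypothesis, using ladders carrying an extra pendant edge at a prescribed corner, or ``ladders with both ends free''. The alternative is to use the freedom to cut each ladder at the attachment points, keeping a sub-ladder of at least half its length on the far side. If both paths attach to the same row, one of them can be followed together with a rung to reach the other row; this loses at most a constant factor of length per gluing, which the induction absorbs.
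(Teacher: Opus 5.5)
\textbf{The gap is the gluing step.} In the form your recursion needs, it is false, not just unproved. Your Helly argument uses the contrapositive: two disjoint connected subgraphs of a $2$-connected graph, each containing a model of $(L_{k'},T_{k'})$, should force $(L_k,T_k)$ with $k\approx 2k'+1$. After your ``constant factor'' loss, you would still need $k\ge ck'$ with $c>1$.

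Here is a counterexample. Take two ladders with rows $a_1\cdots a_m$, $b_1\cdots b_m$ and $c_1\cdots c_m$, $d_1\cdots d_m$ (rungs $a_ib_i$ and $c_id_i$). Add the edges $a_1c_1$ and $a_mc_m$, and let $S=\{a_i,c_i\mid i\in[m]\}$. This graph is $2$-connected and is a subgraph of the $4\times m$ grid, so its torso treewidth is at most $4$. It contains two disjoint models of $(L_m,T_m)$. Yet every model of $(L_n,T_n)$ in it has $n\le \max(8,m+2)$. To see this, let $X=\{a_1,c_1,a_m,c_m\}$.
\begin{itemize}
\item Suppose columns avoiding $X$ occur in both copies. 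Then both rows of the ladder must cross $X$, one through $\{a_1,c_1\}$ and the other through $\{a_m,c_m\}$. Take the last column lying in the first copy, together with the parts of the two crossing routes leading from it to $a_1$ and to $a_m$. This is a connected set containing $a_1$ and $a_m$, so it meets every column of that copy. The remaining columns there therefore live in a linear forest, so at most one of them fits. The same holds in the second copy, and at most four columns meet $X$, so $n\le 2+4+2$.
\item Otherwise, all columns avoiding $X$ lie in one copy, and their top branch sets use distinct vertices among its $m-2$ inner rooted vertices. Hence $n\le m+2$.
\end{itemize}

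This is exactly the attachment pattern your heuristics must survive: the Menger paths land at the two far ends of one row, and following a rung does not help. Cutting at the attachment points and keeping half of each ladder returns only about $k'$ columns. So the halving recursion makes no progress, and the $g(k)\log k$ bound is unsupported. At best you could hope for additive progress ($k'=k-1$), and that needs an argument you have not given. There is also an orientation problem. Even if the two rails reach end columns, they may arrive crossed, and a ladder cannot uncross its rails. The rooted row then switches sides, and only about half of the glued ladder can be recovered as $(L_k,T_k)$.

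The paper never glues two large ladders. \Cref{lemma:td2:main_induction} uses an anchored and weakened invariant: for prescribed distinct $u_1,u_2$ in a $2$-connected $G$, large $\td_2(G,S)$ yields either $(L_\ell,T_\ell)$ or a model of $L_k$ whose first column contains $u_1,u_2$ and in which every \emph{column} meets $S$. The ladder grows by one column per step, as follows.
\begin{itemize}
\item Fix two internally disjoint $u_1$--$u_2$ paths and two $s$--$\{u_1,u_2\}$ paths.
\item Delete a bounded set $Z^\circ$ meeting every connected $S$-meeting subgraph that touches both paths of either pair. The bound comes from \cref{lem:tw:erdos:posa} combined with \cref{lemma:td2:packing_of_steps_makes_a_ladder}, which says many disjoint such subgraphs force $(L_\ell,T_\ell)$; that lemma is proved via the adhesion and Erd\H{o}s--Szekeres arguments.
\item The high-$\td_2$ block of $G-Z^\circ$, together with its pendant parts, then misses one path of each pair.
\item The anchored induction inside that block, at the attachment points $u_1',u_2'$, produces a ladder that extends by one column through $u_1,u_2,s$.
\end{itemize}
Anchoring settles where the connecting paths attach. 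The column-rooted invariant sidesteps orientation, and it is converted to $(L_k,T_k)$ only at the end, by pigeonhole on $L_{2k-1}$.

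Your treewidth split and block reduction (as in \cref{lem:S_B}) match the paper. What you are missing is the anchored column-by-column induction and the packing lemma that bounds the deleted set.
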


To prove \Cref{thm:rooted:tdtwo:ladder}, we introduce a few supplementary definitions (see~\Cref{fig:supp-def-td2} for their illustration).
Let $G$ be a graph, let $P$ be a path in $G$, and let $\calH$ be a family of pairwise disjoint subgraphs of $G$, each intersecting $P$.
For every $H \in \calH$ we define \defin{$J^{\calH}_P(H)$} to be the shortest subpath of $P$ containing $V(P) \cap V(H)$,
and we define \defin{$I^{\calH}_P(H)$} to be the set of maximal subpaths of $P$ such that each subpath has both endpoints in $V(H)$ 
and contains no vertices of any $H' \in \calH \setminus \{H\}$.
Note that $\bigcup \{V(Q) \mid Q \in I^\calH_P(H)\} \subset V(J^\calH_P(H))$.
When $\mathcal{H}$ is clear from the context, we will simply write
$J_P(H)$ for $J^\calH_P(H)$ and $\mathcal{I}_P(H)$ for $\mathcal{I}^\calH_P(H)$.

\begin{figure}[tp]
    \centering
    \begin{tikzpicture}[every node/.style={circle, minimum size = 4pt, inner sep = 0pt}]
        \draw[opacity=0.75, fill, Chartreuse3] (0.66,-0.33) rectangle (9.33,0.33);
        \node[Chartreuse3] at (1.5,-1) {$\mathcal{J}^\mathcal{H}_P(H_1)$};

        \draw[opacity=0.75, fill, MediumPurple1] (4.8,-0.2) rectangle (7.2, 0.2);
        \node[MediumPurple1] at (6,-1) {$\mathcal{J}^\mathcal{H}_P(H_2)$};

        \draw[opacity=0.75, fill, Chartreuse4] (0.8, -0.2) rectangle (4.2, 0.2);
        \draw[opacity=0.75, fill, Chartreuse4] (7.8, -0.2) rectangle (9.2, 0.2);
        \node[inner sep=1mm, rectangle, Chartreuse4] (labelI) at (8.5,-2) {$\mathcal{I}^\mathcal{H}_P(H_1)$};

        \draw[Chartreuse4, -{Latex[length=3mm]}, bend left, in=-60, out=-180, relative=false] (labelI) to (2.75, -0.5);
        \draw[Chartreuse4, -{Latex[length=3mm]}] (labelI) to (8.5, -0.5);
        
        \foreach \i in {0,...,10}{
            \node[circle, fill] (p\i) at (\i,0) {};
        }
        \node (p-1) at (-1,0) {};
        \node (p11) at (11,0) {};
        \foreach \i [evaluate=\i as \j using {int(\i+1)}] in {-1,...,10}{
            \draw (p\i) -- (p\j);
        }
        \node at (11+0.33,-0.33) {$P$};

        \node[black, fill] (h1) at (2.5,1) {};
        \node[black, fill] at (p1) {};
        \node[black, fill] at (p4) {};
        \draw[black] (h1) -- (p1);
        \draw[black] (h1) -- (p4);
        \node[black, fill] (h2) at (4.5,2) {};
        \draw[black] (h1) -- (h2);
        \node[black, fill] (h3) at (6.5,2) {};
        \draw[black] (h2) -- (h3);
        \node[black, fill] (h4) at (8.5,1) {};
        \draw[black] (h3) -- (h4);
        \node[black, fill] at (p8) {};
        \node[black, fill] at (p9) {};
        \draw[black] (h4) -- (p8);
        \draw[black] (h4) -- (p9);
        \node[black] at (8,2) {$H_1$};

        \node[black, fill] (h') at (6,1) {};
        \node[black, fill] at (p5) {};
        \node[black, fill] at (p6) {};
        \node[black, fill] at (p7) {};
        \draw[black] (h') -- (p5);
        \draw[black] (h') -- (p6);
        \draw[black] (h') -- (p7);
        \node[black] at (6.85,0.85) {$H_2$};
    \end{tikzpicture}
    \caption{Illustration for the definitions of $\mathcal{J}^\mathcal{H}_P(H)$ and $\mathcal{I}^\mathcal{H}_P(H)$.}
    \label{fig:supp-def-td2}
\end{figure}



The goal of the following lemmas is to prove \Cref{lemma:td2:packing_of_steps_makes_a_ladder}, 
which will be used several times in the proof of \Cref{thm:rooted:tdtwo:ladder}.
This is inspired by~\cite[Lemma~16]{rambaud2025excludingrectangulargrid}.

\begin{lemma}\label{lem:S_B}
    Let \((G,S)\) be an annotated graph. 
    For every block \(B\) of \(G\), let 
    \[
    S_B:=\{v\in V(B)\mid \text{ there exists a \(v\)--\(S\) path \(P\) in \(G\) with }
    V(P)\cap V(B)=\{v\}\}.
    \]
    Then
\[
\td_2(G,S)= \max\{\td_2(B,S_B)\mid  B\text{ is a block of }G\}.
\]
Moreover, if an annotated graph \((H,T)\) is a minor of \((B,S_B)\) for some block $B$ of $G$, then \((H,T)\) is a minor of \((G,S)\).
\end{lemma}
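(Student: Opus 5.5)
The plan is to prove the \q{moreover} part first and use it for the inequality $\td_2(G,S) \geq \max_B \td_2(B,S_B)$. The reverse inequality I will get by gluing optimal torso sets of the blocks.

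\emph{The minor statement and the lower bound.} Fix a block $B$ of $G$. I claim that $(B,S_B)$ is a minor of $(G,S)$. For each $v \in S_B$, pick a $v$--$S$ path $P_v$ with $V(P_v) \cap V(B) = \{v\}$; for $v \in S \cap V(B)$ take the trivial path. If $v \in S_B \setminus S$, then $v$ is a cut vertex, and $P_v - v$ lies in the union of the components of $G - V(B)$ attached to $B$ at $v$ only. Hence the paths $P_v$ for distinct $v$ are disjoint. Setting $B_v = V(P_v)$ for $v \in S_B$ and $B_v = \{v\}$ for the other vertices of $B$ gives a model of $(B,S_B)$ in $(G,S)$. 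Minors of annotated graphs compose, since branch sets can be unioned. So every minor of $(B,S_B)$ is a minor of $(G,S)$. Since $\td_2$ is minor-monotone (from~\cite{HJMSW22}), \cref{obs:monotonicity} yields $\td_2(B,S_B) \le \td_2(G,S)$ for every block $B$.

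\emph{The upper bound.} For each block $B$, choose $S'_B$ with $S_B \subseteq S'_B \subseteq V(B)$ and $\td_2(\torso_B(S'_B)) = \td_2(B,S_B)$. I normalise by setting $S'_B = \emptyset$ whenever $S_B = \emptyset$, which costs nothing since the torso is then the null graph. Let $S' = \bigcup_B S'_B$. This contains $S$, since every $s \in S \cap V(B)$ lies in $S_B$ via the trivial path.

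The key claim is that every edge $uv$ of $\torso_G(S')$ is an edge of $\torso_B(S'_B)$ for some block $B$ containing both $u$ and $v$. Take a $u$--$v$ path $P$ whose internal vertices avoid $S'$.
\begin{itemize}
    \item Suppose first that $u$ and $v$ share no block. Then some cut vertex $c$ separates them and is internal to $P$. Both sides of $c$ contain vertices of $S'$, and by the normalisation every vertex of $S'$ lies in a block $B'$ with $S_{B'} \neq \emptyset$. This should force $S$ to be reachable from $c$ on both sides, so that $c \in S_{B''}$ for a block $B''$ containing $c$, and hence $c \in S'$. That contradicts the choice of $P$.
    \item So $u$ and $v$ lie in a common block $B$. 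Every excursion of $P$ outside $B$ leaves and re-enters through the same cut vertex. Shortcutting these excursions gives a $u$--$v$ path in $B$ whose interior avoids $S' \supseteq S'_B$.
\end{itemize}
Conversely, each $\torso_B(S'_B)$ is a subgraph of $\torso_G(S')$, because paths in $B$ are paths in $G$. Therefore $\torso_G(S')$ is the union of the graphs $\torso_B(S'_B)$, glued tree-like along cut vertices of $G$. Consequently, every $2$-connected subgraph of it lies inside a single $\torso_B(S'_B)$. Using (t2) and the monotonicity of $\td_2$ under subgraphs, this gives $\td_2(G,S) \le \td_2(\torso_G(S')) \le \max_B \td_2(B,S_B)$.

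\emph{Main obstacle.} I expect the delicate step to be the cut-vertex case of the key claim, where I must check carefully that the normalisation forces every separating cut vertex into $S'$. If that fails, my fallback is to add to $S'$ all cut vertices $c$ with $c \in S_B$ for some block $B \ni c$. Such $c$ already belong to $S'_B$, so this should not change any $\torso_B(S'_B)$.
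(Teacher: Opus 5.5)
Your proof of the \q{moreover} part and of the lower bound $\td_2(B,S_B)\le\td_2(G,S)$ is correct, and more explicit than the paper's. The gap is in the upper bound, and it is not in the case you flagged.

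\textbf{Where the upper bound fails.} Your second bullet only shows that $uv$ is an edge of $\torso_B(S'\cap V(B))$. To conclude that $uv$ is an edge of $\torso_B(S'_B)$, you need $S'\cap V(B)=S'_B$, and this fails for your $S'$. A cut vertex $c$ can enter $S'$ through $S'_{B_1}$ for one block $B_1\ni c$, while $c\notin S'_B$ for another block $B\ni c$.

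Concretely, let $G$ be the path $s\,c\,u$ and let $S=\{s\}$.
\begin{itemize}
\item[(a)] $S_{\{s,c\}}=\{s\}$ and $S_{\{c,u\}}=\{c\}$.
\item[(b)] The optimal sets are forced to be $S'_{\{s,c\}}=\{s\}$ and $S'_{\{c,u\}}=\{c\}$, each of value $1$.
\item[(c)] Hence $S'=\{s,c\}$ and $\torso_G(S')=K_2$, which has $\td_2(K_2)=2>1=\max_B\td_2(B,S_B)$.
\end{itemize}

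The case you were worried about is harmless. Under your normalisation, every cut vertex of a component meeting $S$ lies in $S_{B''}$ for some block $B''$ at it (only one side needs to contain $S$), and hence lies in $S'$. That is exactly the trouble: $S'\cap V(B)$ then contains every cut vertex of $B$, which can raise $\td_2$ of the torso of $B$. Your fallback only adds vertices that are already in $S'$, so it does not help.

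\textbf{The missing idea.} The optimal sets of pendant blocks must not be glued in. The paper achieves this through the one-sum identity \eqref{eq:td2-one-sum}. If $S$ misses $G_1$, it uses $S'_2$ alone and discards $S'_1$. If $S$ meets both sides, the common vertex lies in both $S_1$ and $S_2$, so the two torsos glue consistently. It then inducts on leaf blocks.

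\textbf{Repairing your global gluing.} You can keep your approach if you choose $S'$ differently:
\[
S'=S\cup\bigcup\{S'_B : |S_B|\ge 2\}.
\]
\begin{itemize}
\item[(a)] Suppose $c\in S'_{B'}$ with $|S_{B'}|\ge 2$, and $c\in V(B)$ for a block $B\ne B'$. Walk inside $B'$ from $c$ to a vertex of $S_{B'}\setminus\{c\}$, then continue to $S$. This walk meets $B$ only at $c$, so $c\in S_B$.
\item[(b)] Consequently $S'\cap V(B)=S'_B$ whenever $|S_B|\ge2$, and $|S'\cap V(B)|\le 1$ otherwise.
\item[(c)] In the cut-vertex case you now need $S$ on both sides of the separating vertex $c$. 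This does hold, because $u$ and $v$ each reach $S$ within their own side of $c$.
\item[(d)] Let $B_1$ be the block containing the edge of $P$ that enters $c$. By (c), $|S_{B_1}|\ge 2$ and $c\in S_{B_1}\subseteq S'$, contradicting the choice of $P$.
\end{itemize}
With this choice of $S'$, the rest of your argument goes through.
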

\begin{proof}
We first prove an auxiliary observation. Suppose that $G_1$ and $G_2$ are
connected subgraphs of $G$ such that $G = G_1 \cup G_2$ and $|V(G_1) \cap V(G_2)| = 1$.
Let $v$ be the common vertex of $G_1$ and $G_2$.
For $\{i,j\} = [2]$, define
\[
S_i=(S\cap V(G_i))\cup
\begin{cases}
\{v\},&\text{if }S\cap (V(G_j)\setminus\{v\})\neq\emptyset,\\
\emptyset,&\text{otherwise}.
\end{cases}
\]
We claim that
\begin{equation}\label{eq:td2-one-sum}
\td_2(G,S)=\max\{\td_2(G_1,S_1),\td_2(G_2,S_2)\}.
\end{equation}

If $S=\emptyset$, then both sides of~\eqref{eq:td2-one-sum} are equal
to $0$, so suppose that $S\neq\emptyset$.
We first prove the upper bound. 
Suppose first that $S\cap V(G_1)=\emptyset$. Then $v\notin S$, $S_1=\{v\}$, and
$S_2=S$. Let $S_2\subset S_2'$ be such that $\td_2(\torso_{G_2}(S'_2))=\td_2(G_2,S_2)$.
Since $S'_2\subseteq V(G_2)$, we have $\torso_G(S'_2)=\torso_{G_2}(S'_2)$.
Indeed, a path between two vertices of $G_2$ cannot use
$G_1-\{v\}$ without visiting $v$ twice. Hence $\td_2(G,S)\leq \td_2(G_2,S_2)$.
Moreover, $\td_2(G_1,S_1)=1$, while
$\td_2(G_2,S_2)\geq 1$, and consequently $\td_2(G,S)
  \leq
  \max\{\td_2(G_1,S_1),\td_2(G_2,S_2)\}$.
The case $S\cap V(G_2)=\emptyset$ is symmetric.

We may therefore assume that $S$ intersects both $V(G_1)$ and
$V(G_2)$. 
For each $i\in[2]$, let $S_i \subset S_i'$ be such that $\td_2(\torso_{G_i}(S_i'))=\td_2(G_i,S_i)$.
Let $S' = S_1' \cup S_2'$ and note that $v\in S_1'\cap S_2'$.
Additionally,
\[
\torso_G(S')=\torso_{G_1}(S_1')\cup\torso_{G_2}(S_2'),
\]
and the two graphs on the right-hand side intersect only in $v$.
Since every block of the graph on the right-hand side is contained in one of its two parts, the definition of
$2$-treedepth gives
\[
\td_2(\torso_G(S'))
=
\max_{i\in[2]}\td_2(\torso_{G_i}(S_i')).
\]
Consequently,
\[
\td_2(G,S)
\leq
\max\{\td_2(G_1,S_1),\td_2(G_2,S_2)\}.
\]
The reverse inequality follows from \cref{obs:monotonicity}, thus, we obtain \eqref{eq:td2-one-sum}.

We now prove the first assertion of the lemma. 
Without loss of generality, we may assume that $G$ is connected.

We proceed by induction on the number of blocks of $G$. If $G$ has a
single block $B$, then $B=G$ and $S_B=S$, so there is nothing to prove.
Suppose now that $G$ has at least two blocks. Let $B_0$ be a leaf block
of the block-cut tree of $G$, and let $v$ be the unique cutvertex of
$G$ contained in $B_0$. Let $G'$ be the union of all blocks of $G$
different from $B_0$. Then $G=B_0\cup G'$ and $V(B_0)\cap V(G')=\{v\}$.
Let $S_1$ and $S_2$ be the sets defined above \eqref{eq:td2-one-sum} for $G_1 = B_0$ and $G_2 = G'$. 
By the definition of $S_{B_0}$, we have $S_1=S_{B_0}$.
Moreover, for every block $B$ of $G'$, the set obtained by projecting
$S_2$ onto $B$ is precisely $S_B$. Indeed, a path witnessing this
property either ends at a vertex of $S\cap V(G')$, in which case it
also witnesses the corresponding property in $G$, or it ends at $v$
because $S$ has a vertex in $V(B_0)\setminus\{v\}$; in the latter case
it can be extended through $B_0$ to a vertex of $S$. Conversely, a
path in $G$ witnessing that a vertex of $B$ belongs to $S_B$ can be
truncated at $v$ if its endpoint in $S$ lies in
$V(B_0)\setminus\{v\}$.

Therefore, by \eqref{eq:td2-one-sum} and the induction hypothesis,
\begin{align*}
\td_2(G,S)
&=
\max\{\td_2(B_0,S_{B_0}),\td_2(G',S_2)\}\\
&=
\max\{\td_2(B,S_B):B\text{ is a block of }G\}.
\end{align*}
This proves the first assertion.
The \q{moreover} part is straightforward.
\end{proof}

\begin{lemma}\label{lem:adhesion}
    Let $k$ be an integer with $k \geq 2$,
    let $G$ be a graph, 
    let $S \subseteq V(G)$, 
    and let $\calW = \big(T, (W_t \mid t \in V(T))\big)$ be a tree decomposition of $(G, S)$ with width less than $k$.
    Let $\calH$ be a family of pairwise disjoint connected subgraphs of $G$ each intersecting $S$. 
    There exists a family $\calF \subset \calH$ such that 
    \begin{enumerate}
        \item $|\calF| \geqslant \sqrt{|\calH|/k}$, and 
        \item $|\mathcal{I}^{\calF}_P(F)| \leq k+1$ for every $F \in \calF$ and path $P$ in $G$.
    \end{enumerate}
\end{lemma}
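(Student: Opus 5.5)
We reduce the bound on $|\mathcal{I}^{\calF}_P(F)|$ to a statement about the decomposition tree $T$, and then select $\calF$ via a Dilworth-type dichotomy on the subtrees $\calT[H]$, $H \in \calH$. Since each $H \in \calH$ is connected and meets $S$, each $\calT[H]$ is a subtree of $T$. Let $n = |\calH|$ and $m = \lceil \sqrt{n/k}\,\rceil$.

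First we apply \Cref{lem:tw:erdos:posa} to $\calH$ with parameter $m$. Either we obtain $m$ members of $\calH$ whose subtrees $\calT[\cdot]$ are pairwise disjoint, or there is a set $X \subseteq V(T)$ with $|X| \le m-1$ whose bags meet every member of $\calH$. In the second case, $Z = \bigcup_{x\in X} W_x$ has at most $k(m-1)$ vertices. Because the members of $\calH$ are pairwise disjoint, $Z$ meets at most $|Z| < km \le$ roughly $\sqrt{nk}$ of them. This is less than $n$ once $n > k$; for $n \le k$ the conclusion is trivial by taking a single member. Hence the second case cannot occur, and we obtain $\calF \subseteq \calH$ with $|\calF| \ge \sqrt{n/k}$ and $\calT[F] \cap \calT[F'] = \emptyset$ for distinct $F, F' \in \calF$. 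One should double-check the arithmetic $k(m-1) < n$; if it is tight, we replace $m$ by a slightly smaller value, which the $\sqrt{\cdot}$ slack allows.

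Next we show $|\mathcal{I}^{\calF}_P(F)| \le k+1$ for such $\calF$. Fix $F \in \calF$ and a path $P$. The subtrees $\calT[F']$, $F' \in \calF$, are pairwise disjoint, so there is a tree edge structure separating $\calT[F]$ from the others. Consider the components of $T - \calT[F]$; each other $F'$ lives in one such component. Let $Q_1, Q_2, \dots$ be the consecutive maximal subpaths in $\mathcal{I}_P(F)$. Between two consecutive ones, $Q_i$ and $Q_{i+1}$, the path $P$ leaves $F$ and must visit some $F' \neq F$; otherwise the two subpaths would merge, by maximality.

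The key point is that walking along $P$ from $V(F)$ to $V(F')$ and back requires, at each transition, a vertex of the adhesion bag. For the component $C$ of $T-\calT[F]$ containing $\calT[F']$, let $t$ be its neighbor in $\calT[F]$ and $t'$ its attaching node; any path between vertices represented only in $C$ and vertices of $F$ passes through $W_t \cap W_{t'}$. This uses that $(G,S)$-decompositions cover neighbourhoods of components of $G-S'$, so we should work with $S'$, and $F$ meets bags only inside $\calT[F]$. Each excursion of $P$ into a foreign $F'$ thus consumes distinct vertices of some adhesion set $W_t$ with $t \in \calT[F]$, which lie outside $F$.

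The obstacle is that different excursions can go into different components, attached at different nodes of $\calT[F]$, so the counting is not immediately bounded by a single bag. A cleaner bound comes from looking at the segment of $P$ between the first and last vertex of $F$, together with the segments outside $F$. Each gap between $Q_i$ and $Q_{i+1}$ is a subpath avoiding $F$ with both ends adjacent to $F$ and containing a vertex of some $F'$. Taking two points of $F$ and using connectivity of $F$, excursions would yield disjoint paths leaving through separators. If this still fails, we should refine the choice of $\calF$: additionally require, via a second selection step costing the other $\sqrt{\cdot}$ factor, that all $\calT[F']$ for $F' \neq F$ lie in components of $T-\calT[F]$ attached along a single edge. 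Then all gaps pass through one adhesion bag of size at most $k$, distinct gaps use distinct bag vertices since $P$ is a path, and this gives at most $k+1$ subpaths.
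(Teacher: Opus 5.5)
There is a genuine gap in your second step, and the repair you sketch does not close it.

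\textbf{Disjoint subtrees are not enough.} Pairwise disjointness of the subtrees $\calT[F]$, $F\in\calF$, does not bound $|\mathcal{I}^{\calF}_P(F)|$ at all. Here is a counterexample.
\begin{itemize}
\item Let $F$ be a star with centre $c$ and leaves $v_1,\dots,v_m$.
\item For each $i<m$, join $v_i$ to $v_{i+1}$ by a path $v_i a_i w_i b_i v_{i+1}$.
\item Put $F'_i=\{w_i\}$ and $S=\{c,w_1,\dots,w_{m-1}\}$.
\item Use spine bags $\{c,v_i,v_{i+1}\}$ arranged along a path, a pendant bag $\{v_i,v_{i+1},a_i,b_i\}$ at each spine bag, and a further pendant bag $\{a_i,b_i,w_i\}$.
\end{itemize}
This tree decomposition has width $3$, so $k=4$ works, and all the subtrees are pairwise disjoint. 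Yet the path $v_1a_1w_1b_1v_2\cdots v_m$ gives $|\mathcal{I}^{\calF}_P(F)|=m$. So the ``two points of $F$ and connectivity'' idea cannot succeed.

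\textbf{The single-edge fallback fails.} Requiring that, for each $F$, all other members lie beyond a single edge of $T$ cannot be achieved with $\sqrt{n/k}$ members. If $T$ is a path and the subtrees $\calT[H]$ are consecutive intervals, any three chosen members violate the condition for the middle one. So you get at most $2$ members.

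\textbf{The budget is already spent.} You applied \cref{lem:tw:erdos:posa} with parameter $\lceil\sqrt{n/k}\,\rceil$. A second selection ``costing the other square root'' would therefore leave only about $(n/k)^{1/4}$ members.

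\textbf{What the paper does instead.} It uses two ingredients you are missing.

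First, apply \cref{lem:tw:erdos:posa} with parameter about $n/k$. The hitting-set alternative would cover all $n$ disjoint members with fewer than $n$ vertices, which is impossible. This yields $\calH'$ with $|\calH'|\ge n/k$ and pairwise disjoint subtrees. Take a minimal subtree $T_0$ of $T$ meeting all of these subtrees, and let $\ell=\lceil\sqrt{n/k}\,\rceil$. There are three cases.
\begin{itemize}
\item If $T_0$ has $\ell$ leaves: by minimality, each leaf $x$ is the only node of $T_0$ in the subtree of its member. So the single adhesion on the edge from $x$ into $T_0$ separates that member from all others.
\item If some path of $T_0$ meets $\ell$ of the subtrees: each of those members is separated from the rest by the two adhesions at the ends of its interval on that path.
\item If neither holds: $T_0$ is covered by at most $\ell-1$ root-to-leaf paths, each meeting at most $\ell-1$ subtrees. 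This gives $|\calH'|\le(\ell-1)^2<n/k$, a contradiction.
\end{itemize}

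Second, a sharper count makes two adhesions affordable. Between two consecutive members of $\mathcal{I}^{\calF}_P(F)$, the path leaves $F$, reaches some other member, and returns. So each gap uses two distinct vertices of the separator $Z$. Hence $|\mathcal{I}^{\calF}_P(F)|\le |Z|/2+1\le k+1$ even when $|Z|\le 2k$. Your ``one bag vertex per gap'' count would only give $2k+1$ in the path case.
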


\begin{proof}
    Let $\ell = \lceil \sqrt{|\mathcal{H}|/k} \rceil$.
    If $\ell \leq 1$, then taking for $\mathcal{F}$ any $\ell$-subset of $\mathcal{H}$ witnesses the statement.
    Now suppose that $\ell \geq 2$.

    Note that $\mathcal{W}$ has adhesion at most $k$.
    Since the members of $\mathcal{H}$ are pairwise disjoint,
    there is no set of less than $|\mathcal{H}|$ vertices intersecting every member of $\mathcal{H}$.
    Therefore, there is no set $X \subseteq V(T)$ with $|X| \leq \frac{|\mathcal{H}|-1}{k}$ such that $\bigcup_{x \in X} W_x$ intersects every member of $\mathcal{H}$.
    By \Cref{lem:tw:erdos:posa}, there is $\mathcal{H}' \subseteq \mathcal{H}$
    such that
    \begin{enumerate}
        \item $|\mathcal{H}'| \geq \frac{|\mathcal{H}|}{k}$, and
        \item the sets $\mathcal{W}[H]$ for $H \in \mathcal{H}'$ are pairwise disjoint.
    \end{enumerate}
    Consider a minimal subtree $F$ of $T$ that intersects every $\calW[H]$ for $H \in \mathcal{H}'$.

    We now distinguish several cases.
    First suppose that $F$ has at least $\ell$ leaves.
    Then, let $\mathcal{F}$ be the set of all the members $H$ of $\mathcal{H}'$ such that $\calW[H]$ contains a leaf of $F$.
    Consider some $H \in \mathcal{F}$.
    There is an adhesion in $\mathcal{W}$
    separating $V(H)$ from $\bigcup_{H' \in \mathcal{F}, H' \neq H} V(H')$ in $G$.
    Let $Z$ be this adhesion.
    Since for every two consecutive $I,I' \in \mathcal{I}^\mathcal{F}_P(H)$ along $P$,
    the subpath of $P$ between $I$ and $I'$ intersects $\bigcup_{H' \in \mathcal{F}, H' \neq H} V(H')$,
    this subpath crosses $Z$ at least twice,
    we deduce that
    \[
        |\mathcal{I}^\mathcal{F}_P(H)| \leq |Z|/2+1 \leq k+1.
    \]

    Now suppose that there is a path in $F$ and $\ell$ members $H_1,\dots,H_\ell$ of $\calH'$ such that this path intersects $\calW[H_i]$ for every $i \in [\ell]$.
    We take $\mathcal{F} = \{H_1, \dots, H_\ell\}$.
    Consider $i \in [\ell]$.
    Then, there are at most two adhesions in $\mathcal{W}$
    separating $V(H_i)$ and $\bigcup_{j \in [\ell], j \neq i} V(H_j)$ in $G$.
    Let $Z$ be the union of these two adhesions.
    Since for every two consecutive $I,I' \in \mathcal{I}^\mathcal{F}_P(H_i)$ for some $i \in [\ell]$ along $P$,
    the subpath of $P$ between $I$ and $I'$ intersects $\bigcup_{j \in [\ell], j \neq i} V(H_j)$,
    this subpath crosses $Z$ twice,
    we deduce that
    \[
        |\mathcal{I}^\mathcal{F}_P(H_i)| \leq |Z|/2+1 \leq k+1.
    \]

    In the last case,
    every path in $F$ intersects at most $\ell-1$ sets $\calW[H]$ for $H \in \mathcal{H}'$,
    and there are at most $\ell-1$ leaves in $F$.
    Let $r$ be any vertex of $F$.
    Then, for every leaf $x$, every $r$-$x$ path in $F$
    intersects at most $\ell-1$ sets $\calW[H]$ for $H \in \mathcal{H}'$.
    Since $F$ is covered by the paths between $r$ and the leaves,
    we deduce that
    \[
        |\mathcal{H}'| \leq (\ell-1)^2 < \tfrac{|\mathcal{H}|}{k},
    \]
    a contradiction.
\end{proof}


We will also need the celebrated Erd\H{o}s-Szekeres Theorem:
\begin{theorem}[Erd\H{o}s-Szekeres Theorem]\label{thm:erdos:szekeres}
    For every positive integer $k$ and every permutation
    $\pi$ of $[k^2+1]$, there exists $I \subseteq [k^2+1]$ of size $k+1$
    such that $\pi\vert_I$ is either decreasing or increasing.
\end{theorem}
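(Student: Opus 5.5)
The statement immediately above is the Erdős--Szekeres Theorem, a classical result that the paper uses as a black box. I would nevertheless prove it directly by the standard labelling (pigeonhole) argument. Write $n = k^2+1$ and regard $\pi$ as the sequence $\pi(1),\dots,\pi(n)$ of distinct integers. For every index $i \in [n]$, let $a_i$ be the length of a longest increasing subsequence ending at position $i$. Similarly, let $b_i$ be the length of a longest decreasing subsequence ending at position $i$. Both numbers are at least $1$.

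The key step is to show that the map $i \mapsto (a_i,b_i)$ is injective. Take indices $i<j$. Since the values are distinct, either $\pi(i)<\pi(j)$ or $\pi(i)>\pi(j)$. In the first case, a longest increasing subsequence ending at $i$ can be extended by $\pi(j)$, so $a_j \geq a_i+1$. In the second case, the same extension applied to a decreasing subsequence gives $b_j \geq b_i+1$. Either way $(a_i,b_i) \neq (a_j,b_j)$.

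Now suppose, for contradiction, that no monotone subsequence of length $k+1$ exists. Then every pair $(a_i,b_i)$ lies in $[k]\times[k]$, a set of size $k^2$. By injectivity this forces $n \leq k^2$, contradicting $n=k^2+1$. Hence some $a_i$ or $b_i$ is at least $k+1$. The corresponding subsequence, truncated to its last $k+1$ terms, has an index set $I$ with $|I| = k+1$ on which $\pi|_I$ is monotone.

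There is no real obstacle here. The only point requiring care is the injectivity step, which relies on the values of the permutation being distinct, so that exactly one of the two comparisons holds for each pair $i<j$.
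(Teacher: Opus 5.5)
Your proof is correct and complete. It is the standard labelling argument due to Seidenberg. The injectivity of $i \mapsto (a_i,b_i)$ and the pigeonhole count over $[k]\times[k]$ are exactly what is needed, and truncating a monotone subsequence of length at least $k+1$ to $k+1$ terms gives the required set $I$.

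The paper gives no proof of this statement. It quotes the classical Erd\H{o}s--Szekeres theorem and uses it as a black box, with $k-1$ in place of $k$, in the base case $\ell_1=\ell_2=1$ of \Cref{lem:tdtwo:ladder:technical:newversion}. So there is no competing route to compare against. Your version has one small advantage: it makes explicit that only the distinctness of the values is used. That is the form the application needs, since there the ``permutation'' compares the positions of the vertices $p_1(H)$ along $P_1$ with those of $p_2(H)$ along $P_2$. These positions are distinct because the members of $\mathcal{H}$ are pairwise disjoint.
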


\begin{lemma}\label{lem:tdtwo:ladder:technical:newversion}
    There is a function $f_{\ref{lem:tdtwo:ladder:technical:newversion}}\colon \posint^3 \to \posint$ such that
    the following holds.
    Let $k$, $\ell_1$, and $\ell_2$ be positive integers,
    let $G$ be a graph,
    let $P_1$ and $P_2$ be disjoint paths in $G$,
    and let $S \subset V(G)$.
    Let $\calH$ be a family of connected, pairwise disjoint subgraphs of $G$ such that for every $H \in \mathcal{H}$,
    \begin{enumerate}
        \item $V(H) \cap S \neq \emptyset$,
        \item $1 \leq |\mathcal{I}_{P_1}(H)| \leq \ell_1$, and 
        \item $1 \leq |\mathcal{I}_{P_2}(H)| \leq \ell_2$.
    \end{enumerate}
    If $|\calH| \geqslant f_{\ref{lem:tdtwo:ladder:technical:newversion}}(k, \ell_1, \ell_2)$, 
    then $(L_k, T_k)$ is a minor of $(G,S)$. 
\end{lemma}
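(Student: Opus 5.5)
The plan is to extract from $\calH$ a long sequence $H_1,\dots,H_m$ that is ``linearly arranged'' along both $P_1$ and $P_2$ in the same order. Then $P_1$ will provide the connections along the top row of the ladder, and $P_2$ together with alternate members of the sequence will provide the bottom row and the rungs. All branch sets of the top row will contain some $H_i$, so they meet $S$, as required for a model of $(L_k,T_k)$. The function $f_{\ref{lem:tdtwo:ladder:technical:newversion}}$ will be a tower of Ramsey-type bounds in $k,\ell_1,\ell_2$.

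\emph{Step 1 (ordering along one path).} Fix $P=P_1$. For each $H\in\calH$ the span $J_P(H)$ is cut by the at most $\ell_1$ intervals of $\mathcal{I}_P(H)$ into at most $\ell_1-1$ gaps. For two members $H,H'$, I would first aim for a dichotomy: either the spans are disjoint, or one of them, say $J_P(H')$, lies inside a single gap of $H$, or they interleave in a way that can be charged to the bounded number of intervals. To make this precise, I would apply Ramsey's theorem to the pairwise ``relation type'' (disjoint, nested, crossing, with a bounded number of types depending on $\ell_1$). Crossing should be impossible for a large homogeneous set, since each $H$ has at most $\ell_1$ intervals and crossings would create too many of them. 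This yields a large subfamily that is either pairwise span-disjoint, hence linearly ordered along $P$, or a nested chain $H_1,H_2,\dots$ with $J_P(H_{i+1})$ inside a gap of $H_i$.

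In both cases, consecutive members $H_i,H_{i+1}$ are joined by a subpath $A_i$ of $P$ whose interior avoids all chosen members. In the nested case, one checks by induction that $J_P(H_{i+1})$ contains no vertex of $H_1,\dots,H_i$, and deeper members lie inside $J_P(H_{i+1})$. Vertices of unchosen members of $\calH$ on $A_i$ are harmless, since those members are discarded.

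\emph{Step 2 (second path and alignment).} Repeat Step 1 with $P_2$ on the resulting subfamily, obtaining a linear order of a sub-subfamily along $P_2$ with connecting subpaths $B_i$. Then apply Erdős–Szekeres (\cref{thm:erdos:szekeres}) to the two linear orders, getting a long sequence whose orders along $P_1$ and $P_2$ agree (after possibly reversing one path). The connecting subpaths restricted to this sequence still avoid the other chosen members, since they only shrink to unions of consecutive pieces.

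\emph{Step 3 (building the ladder), the expected main obstacle.} With the aligned sequence $H_1,\dots,H_m$, I would use triples: members $H_{3i}$ serve as cores of the top vertices $u_i$, and members $H_{3i\pm1}$ together with $P_2$-segments form the bottom vertices $v_i$.
\begin{itemize}
\item The top branch set $u_i$ is $H_{3i}$ together with $P_1$-segments leading toward $H_{3i+3}$.
\item The bottom branch set $v_i$ is built from $P_2$-segments around $H_{3i}$'s touching points on $P_2$.
\item A rung arises from an edge of $P_2$ leaving $H_{3i}$.
\end{itemize}
The difficulty is disjointness: a top connection along $P_1$ from $H_{3i}$ to $H_{3i+3}$ may pass through $H_{3i+1}$ or $H_{3i+2}$, which are used on the bottom. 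Symmetrically, $P_2$ may run through the top cores. I would first try to resolve this by one more subsequence and pigeonhole step that restricts to an arrangement in which the $P_1$-links between chosen top cores avoid the chosen bottom cores and vice versa. If that fails, I would instead define the bottom row purely from $P_2$-segments strictly between consecutive $H_{2i}$'s, which requires $P_2$ and $P_1$ to be disjoint (they are) and the segments to be nonempty, which can be arranged by skipping every other member. Contracting the top branch sets along $P_1$ and the bottom ones along $P_2$ then gives the model of $(L_k,T_k)$ once $m\geq 3k$.
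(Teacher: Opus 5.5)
There is a genuine gap in Step~1 and a fatal one in Step~3. In Step~1, the claim that a large homogeneous family cannot be pairwise crossing is false. For $i\in[m]$, let $H_i$ meet $P_1$ exactly in the $i$-th and the $(m+i)$-th vertex of $P_1$. Then every $H_i$ has exactly two one-vertex intervals. Yet for all $i<j$ these vertices appear in the order $i<j<m+i<m+j$, so all pairs cross, for every $m$. Crossings do not create extra intervals; they only mean you must restrict to a subpath of $P_1$. The paper does this instead of using Ramsey:
\begin{enumerate}
\item either $f(k,1,\ell_2)$ members have pairwise disjoint spans $J_{P_1}(H)$, and then each has a single interval with respect to this subfamily;
\item or, by the Helly property of subpaths, a small set $X\subseteq V(P_1)$ meets all spans. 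Then some component $Q_j$ of $P_1-X$ meets many members avoiding $X$. After contractions making all intervals single vertices, each of these members has an interval off $Q_j$, so induction on $\ell_1+\ell_2$ applies with $\ell_1-1$.
\end{enumerate}

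The more serious problem is Step~3: no construction in which every top branch set contains a whole member can work, because every member also meets $P_2$. Take $G=L_n$ with $P_1,P_2$ its two rows, $S$ the first row, and $\calH$ the set of rungs; this is a valid instance with $\ell_1=\ell_2=1$. Suppose three rungs lie in three distinct top branch sets. The union of the bottom branch sets is connected and avoids these rungs, so it lies in a single component of $G$ minus them. Only the top branch sets containing the (at most two) rungs bounding that component can be adjacent to it, so no model of $L_k$ with $k\geq 3$ arises.

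Your two repairs do not help. In the aligned order, the $P_1$-subpath from a vertex of $H_{3i}$ to a vertex of $H_{3i+3}$ always passes through a vertex of $H_{3i+1}$, in the span-disjoint and nested cases alike. In the fallback, consecutive bottom segments of $P_2$ are separated by the $P_2$-vertices of the top core between them.

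The missing idea is to cut each member into a $P_1$-part and a $P_2$-part joined by one edge, which serves as the rung. The vertex of $S$ then lies on only one side, so you must pigeonhole over the two sides (losing a factor~$2$) and let the corresponding path carry the annotated row. The paper achieves this by contracting edges inside the intervals and edges of members with an endpoint outside $V(P_1)\cup V(P_2)$, so that in the base case every member is a single edge $p_1(H)p_2(H)$. After choosing the side containing $S$ and applying Erd\H{o}s--Szekeres, the branch sets are simply the segments of $P_i$ from $p_i(H_j)$ to the predecessor of $p_i(H_{j+1})$.
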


\begin{proof}
We set 
\[
f_{\ref{lem:tdtwo:ladder:technical:newversion}}(k,\ell_1,\ell_2) =
\begin{cases}
    2((k-1)^2+1) & \textrm{if $\ell_1=\ell_2=1$,} \\
    f_{\ref{lem:tdtwo:ladder:technical:newversion}}(k,1,\ell_2)(f_{\ref{lem:tdtwo:ladder:technical:newversion}}(k,\ell_1-1,\ell_2)+1)-1& \textrm{if $\ell_1\geq 2$,} \\
    f_{\ref{lem:tdtwo:ladder:technical:newversion}}(k,\ell_1,1)(f_{\ref{lem:tdtwo:ladder:technical:newversion}}(k,\ell_1,\ell_2-1)+1)-1 & \textrm{if $\ell_1=1$ and $\ell_2\geq 2$.}
\end{cases}
\]

We call a tuple $(k,\ell_1,\ell_2,P_1,P_2,\mathcal{H})$ satisfying the assumptions of the lemma an \emph{instance}.
We prove the statement of the lemma by induction on $\ell_1 + \ell_2 + |V(G)|$.
Let $(k,\ell_1,\ell_2,P_1,P_2,\mathcal{H})$ be an instance with $|\mathcal{H}| \geq f_{\ref{lem:tdtwo:ladder:technical:newversion}}(k,\ell_1,\ell_2)$.

If there exists
$H \in \mathcal{H}$ and $I \in \mathcal{I}_{P_1}(H) \cup \mathcal{I}_{P_2}(H)$ with $|V(I)|\geq 2$,
then we can call the induction hypothesis on the instance obtained by contracting an edge in the path $I$: if there is a model of $(L_k,T_k)$ in the contracted graph, there is also in the original one.
Hence we can assume the following.
\begin{equation}\label{eq:ladder_technical:intersections_of_size_1}
    \text{For every $H \in \mathcal{H}$, for every $I \in \mathcal{I}_{P_1}(H) \cup \mathcal{I}_{P_2}(H)$, $|V(I)|=1$.} 
\end{equation}
Similarly, if there is $H \in \mathcal{H}$ and an edge $e=uv$ in $H$ with $v \not\in V(P_1) \cup V(P_2)$,
then we can contract $e$ and call the induction hypothesis on this smaller instance.
Hence, using the fact that every member of $\mathcal{H}$ is connected,
we can assume the following.
\begin{equation}\label{eq:ladder_technical:H_in_P1cupP2}
    \text{For every $H \in \mathcal{H}$, $V(H) \subseteq V(P_1) \cup V(P_2)$.} 
\end{equation}

First suppose $\ell_1=\ell_2=1$.
Then, for every $H \in \mathcal{H}$ and $i \in [2]$,
$V(H) \cap V(P_i)$ consists of one vertex because of \eqref{eq:ladder_technical:intersections_of_size_1}.
By \eqref{eq:ladder_technical:H_in_P1cupP2}, this implies that
every $H \in \mathcal{H}$ consists of a single edge, that we denote by $p_1(H)p_2(H)$ with $p_1(H) \in V(P_1)$ and $p_2(H) \in V(P_2)$.
Let $\mathcal{H}_i = \{H \in \mathcal{H} \mid p_i(H) \in S\}$ for each $i \in [2]$.
Since $\mathcal{H} = \mathcal{H}_1 \cup \mathcal{H}_2$,
there exists $i\in[2]$ such that $|\mathcal{H}_i| \geq |\mathcal{H}|/2 \geq (k-1)^2+1$.
Without loss of generality assume $i=1$.
Then, by the Erd\H{o}s-Szekeres Theorem (\Cref{thm:erdos:szekeres}),
there is a subfamily $\mathcal{H}' \subseteq \mathcal{H}_1$ of size $k$ 
such that the vertices $p_1(H)$ and $p_2(H)$ for $H \in \mathcal{H}'$
appear in the same or reverse order along $P_1,P_2$.
By possibly reversing the order of $P_2$, we can assume they appear in the same order.
Let $H_1, \dots, H_k$ be the members of $\mathcal{H}'$ such that
$p_i(H_1), \dots, p_i(H_k)$ appear in this order along $P_i$, for each $i \in [2]$.
Then, for every $i \in [2]$, for every $j \in [k-1]$,
let $B_{i,j}$ denote the vertex set of the subpath of $P_i$ starting at $p_i(H_j)$ and ending at the predecessor of $p_i(H_{j+1})$,
and let $B_{i,k} = \{p_i(H_k)\}$.
Then, $(B_{i,j} \mid (i,j) \in [2] \times [k])$ is a model of $(L_k,T_k)$ in $(G,S)$.

Now suppose that $\ell_1 \geq 2$ or $\ell_2 \geq 2$ and without loss of generality, assume $\ell_1 \geq 2$.

Consider the family of the subpaths $J_{P_1}(H)$ for $H \in \mathcal{H}$.
If there exists $\mathcal{H}' \subseteq \mathcal{H}$ of size at least $f_{\ref{lem:tdtwo:ladder:technical:newversion}}(k,1,\ell_2)$ for which the paths $J_{P_1}(H)$ for $H \in \mathcal{H}'$
are pairwise disjoint, then 
$|I^{\mathcal{H}'}_{P_1}(H)| = 1$ for each $H \in \mathcal{H}'$,
and so we can call the induction hypothesis on the instance $(k,1,\ell_2, P_1, P_2, \mathcal{H}')$.
Otherwise, by the Helly property of subpaths of $P_1$,
there exists $X \subseteq V(P_1)$ of size at most $f_{\ref{lem:tdtwo:ladder:technical:newversion}}(k,1,\ell_2)-1$
such that $X$ intersects $V(J_{P_1}^\calH(H))$ for every $H \in \mathcal{H}$.
Let $Q_1, \dots, Q_m$ be the components of $P_1-X$.
Recall that every $H \in \mathcal{H}$ intersects $P_1$,
and so at least one of the sets $V(Q_1), \dots, V(Q_m)$ and $\{x\}$ for $x \in X$.
Since at most $|X|$ members of $\mathcal{H}$ intersect $X$,
we deduce by the pigeonhole principle
that there exists $j \in [m]$
and $\mathcal{H}' \subseteq \mathcal{H}$ such that
\begin{enumerate}
    \item $|\mathcal{H}'| \geq \frac{|\mathcal{H}|-|X|}{m} \geq f_{\ref{lem:tdtwo:ladder:technical:newversion}}(k,\ell_1-1,\ell_2)$,
    \item for every $H \in \mathcal{H}'$, $H$ is disjoint from $X$, and
    \item for every $H \in \mathcal{H}'$, $H$ intersects $Q_j$.
\end{enumerate}

Let $H \in \mathcal{H}'$.
By \eqref{eq:ladder_technical:intersections_of_size_1},
every $I \in \mathcal{I}_{P_1}(H)$ consists of a single vertex.
Hence, there is $I \in \mathcal{I}_{P_1}(H)$ that is disjoint from $Q_j$.
This implies that
\[
    |\mathcal{I}_{Q_j}^{\mathcal{H}'}(H)| \leq |\mathcal{I}_{P_1}^{\mathcal{H}}(H)|-1 \leq \ell_1-1.
\]
Therefore, we can call the induction hypothesis on
the instance $(k,\ell_1-1,\ell_2,Q_j,P_2,\mathcal{H}')$.
This proves the lemma.
\end{proof}

\begin{lemma}\label{lemma:td2:packing_of_steps_makes_a_ladder}
    There is a function $f_{\ref{lemma:td2:packing_of_steps_makes_a_ladder}} \colon \posint \to \posint$ such that
    the following holds.
    Let $k$ be a positive integer,
    let $G$ be a graph,
    let $S \subset V(G)$,
    let $P_1$ and $P_2$ be two disjoint paths in $G$,
    and let $\mathcal{H}$ be a family of pairwise disjoint connected subgraphs of $G$ such that,
    for every $H \in \mathcal{H}$,
    \begin{enumerate}
        \item $H$ intersects $S$,
        \item $H$ intersects $V(P_1)$, and
        \item $H$ intersects $V(P_2)$.
    \end{enumerate}
    If $|\mathcal{H}| \geq f_{\ref{lemma:td2:packing_of_steps_makes_a_ladder}}(k)$, then
    $(L_k,T_k)$ is a minor of $(G,S)$.
\end{lemma}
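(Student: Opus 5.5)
The plan is to split according to the torso treewidth of $(G,S)$. If it is large, \Cref{thm:rooted:gridminor} gives the ladder directly. If it is small, a tree decomposition of small width lets us use \Cref{lem:adhesion} to pass to a subfamily that meets every path in few pieces, and then \Cref{lem:tdtwo:ladder:technical:newversion} applies.

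Set $a = \max\{2, f_{\ref{thm:rooted:gridminor}}(2k)\}$ and $N = f_{\ref{lem:tdtwo:ladder:technical:newversion}}(k,a+1,a+1)$. Define
\[
f_{\ref{lemma:td2:packing_of_steps_makes_a_ladder}}(k) = a\cdot N^2.
\]

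\emph{Large torso treewidth.} The ladder $L_k$ has $2k$ vertices. It has a plane embedding in which the first row $T_k$ lies on the outer face. So if $\tw(G,S) \geq f_{\ref{thm:rooted:gridminor}}(2k)$, then \Cref{thm:rooted:gridminor} shows that $(L_k,T_k)$ is a minor of $(G,S)$, and we are done.

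\emph{Small torso treewidth.} Otherwise $\tw(G,S) < a$. By \Cref{obs:tw-decomposition} we take a tree decomposition $\calW$ of $(G,S)$ of width less than $a$; it is a decomposition in the sense of \Cref{obs:tw-decomposition} with $S' \supseteq S$. The members of $\calH$ are pairwise disjoint, connected, and each intersects $S$. Hence \Cref{lem:adhesion}, applied with $a$ in place of $k$, yields a subfamily $\calF \subseteq \calH$ with the following properties:
\begin{enumerate}
\item $|\calF| \geq \sqrt{|\calH|/a} \geq N$;
\item $|\mathcal{I}^{\calF}_P(F)| \leq a+1$ for every $F \in \calF$ and every path $P$.
\end{enumerate}
In particular, the second property holds for $P = P_1$ and $P = P_2$.

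\emph{Lower bound on the number of pieces.} We also need $|\mathcal{I}^{\calF}_{P_i}(F)| \geq 1$ for each $i \in [2]$. Since $F$ intersects $P_i$, pick a vertex $v \in V(F) \cap V(P_i)$. The one-vertex subpath $\{v\}$ has both endpoints in $V(F)$ and avoids every other member of $\calF$ (they are disjoint from $F$). It therefore extends to a maximal such subpath, which lies in $\mathcal{I}^{\calF}_{P_i}(F)$.

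\emph{Conclusion.} Each $F\in\calF$ is connected, meets $S$, and satisfies $1 \leq |\mathcal{I}^{\calF}_{P_i}(F)| \leq a+1$ for $i\in[2]$. So $(k,a+1,a+1,P_1,P_2,\calF)$ satisfies the hypotheses of \Cref{lem:tdtwo:ladder:technical:newversion}, and $|\calF| \geq N$. That lemma gives that $(L_k,T_k)$ is a minor of $(G,S)$.

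\emph{Main point to check.} The step needing care is that the bound on the number of pieces from \Cref{lem:adhesion} is stated relative to $\calF$ itself. Pieces in $\mathcal{I}$ are defined by avoiding the other members of the family. Passing to the subfamily $\calF$ could in principle merge pieces, but it cannot destroy the bound, because \Cref{lem:adhesion} bounds $\mathcal{I}^{\calF}$ directly. This is exactly the family fed into \Cref{lem:tdtwo:ladder:technical:newversion}. The remaining steps are only the choice of constants.
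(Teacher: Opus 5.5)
Your proof is correct and follows the paper's argument. Like the paper, you bound $\tw(G,S)$ via \Cref{thm:rooted:gridminor}, extract a subfamily with boundedly many pieces on $P_1$ and $P_2$ via \Cref{lem:adhesion}, and finish with \Cref{lem:tdtwo:ladder:technical:newversion}. The minor differences are that you apply \Cref{lem:adhesion} once rather than twice, and that you explicitly check the lower bound $|\mathcal{I}^{\calF}_{P_i}(F)|\ge 1$, which the paper leaves implicit. Applying it once is legitimate because that lemma bounds $|\mathcal{I}^{\calF}_P(F)|$ for all paths $P$ simultaneously, and it gives the smaller bound $aN^2$ instead of the paper's $a^3N^4$.
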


\begin{proof}
    Let $f_{\ref{lemma:td2:packing_of_steps_makes_a_ladder}}(k) = \big(f_{\ref{thm:rooted:gridminor}}(2k)\big)^3\cdot \big(f_{\ref{lem:tdtwo:ladder:technical:newversion}}(k,f_{\ref{thm:rooted:gridminor}}(2k)+1,f_{\ref{thm:rooted:gridminor}}(2k)+1)\big)^4$.
    Suppose for contradiction that $|\mathcal{H}| \geq f_{\ref{lemma:td2:packing_of_steps_makes_a_ladder}}(k)$
    and that there is no model of $(L_k,T_k)$ in $(G,S)$.
    By \Cref{thm:rooted:gridminor}, there is a tree decomposition of $(G,S)$
    of width less than $f_{\ref{thm:rooted:gridminor}}(2k)$.
    By applying \Cref{lem:adhesion} twice,
    we deduce that
    there exists $\mathcal{H}' \subseteq \mathcal{H}$ of size at least
    $f_{\ref{lem:tdtwo:ladder:technical:newversion}}(k,f_{\ref{thm:rooted:gridminor}}(2k)+1,f_{\ref{thm:rooted:gridminor}}(2k)+1)$
    such that
    $|\mathcal{I}_{P_i}^{\mathcal{H}'}(H)| \leq f_{\ref{thm:rooted:gridminor}}(2k)+1$ for all $H \in \mathcal{H}'$ and $i \in [2]$.
    By \Cref{lem:tdtwo:ladder:technical:newversion},
    we conclude that $(L_k,T_k)$ is a minor of $(G,S)$.
\end{proof}

\begin{lemma}\label{lemma:td2:main_induction}
    There is a function $f_{\ref{lemma:td2:main_induction}}\colon \posint^2 \to \posint$
    such that the following holds.
    For all positive integers $k$ and $\ell$,
    for every $2$-connected graph $G$,
    for every $S \subseteq V(G)$,
    for every $u_1,u_2 \in V(G)$ distinct,    
    if $\td_2(G,S) > f_{\ref{lemma:td2:main_induction}}(\ell,k)$,
    then either
    \begin{enumerate}[label={(\alph*)}]
        \item $(L_\ell,T_\ell)$ is a minor of $(G,S)$, or
        \item there is a model $(B_x \mid x \in V(L_k))$ of $L_k$ in $G$            such that 
            \begin{enumerate}
                \item for every column $C$ of $L_k$,
                    $S \cap (\bigcup_{c \in C}B_c) \neq \emptyset$, and
                \item the vertices of the first column of $L_k$ can be named $\{c_1,c_2\}$ so that $u_1 \in B_{c_1}$ and $u_2 \in B_{c_2}$.
            \end{enumerate}
    \end{enumerate}
\end{lemma}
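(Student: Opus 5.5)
We argue by induction on $k$, keeping $\ell$ fixed, and assume throughout that $(L_\ell,T_\ell)$ is not a minor of $(G,S)$; the goal is then outcome (b). For $k=1$ it suffices that some $u_1$--$u_2$ path in $G$ meets $S$. Split this path at a vertex of $S$ to get the branch sets $B_{c_1}\ni u_1$ and $B_{c_2}\ni u_2$, adjacent along the path. Such a path exists whenever $\td_2(G,S)$ is large. Indeed, if no $u_1$--$u_2$ path meets $S$, consider the block structure of $G-u_1$ and $G-u_2$. Since $G$ is $2$-connected, Menger gives that every vertex of $S$ lies on some $u_1$--$u_2$ path. So if no such path meets $S$, then $S=\emptyset$ and $\td_2(G,S)=0$. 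Hence $f(\ell,1)=1$ works.

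For the induction step, the idea is to delete a bounded ``hub'' and recurse. By \Cref{obs:td2}, for every vertex $w$ we have $\td_2(G-w,S\setminus\{w\})\ge \td_2(G,S)-1$. More usefully, for every small set $Z$, $\td_2(G-Z,S\setminus Z)\ge \td_2(G,S)-|Z|$. I would choose $Z$ as follows. Take a tree decomposition of $(G,S)$ of width $w_0<f_{\ref{thm:rooted:gridminor}}(2\ell)$, which exists by \Cref{thm:rooted:gridminor} since $(L_\ell,T_\ell)$ is planar with $T_\ell$ on the outer face. Then apply \Cref{lem:S_B} to $G-\{u_1,u_2\}$ minus a small set to find a block $B$ with $\td_2(B,S_B)>f(\ell,k-1)$. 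The induction hypothesis, applied to $(B,S_B)$ with two suitable distinct vertices $u_1',u_2'\in V(B)$, gives a model of $L_{k-1}$ rooted at $u_1',u_2'$ in its first column. This transfers to $(G,S)$ by the ``moreover'' part of \Cref{lem:S_B}, after extending branch sets along the paths witnessing membership in $S_B$.

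It remains to attach one extra column containing $u_1,u_2$ and a vertex of $S$, with rungs to $u_1'$ and $u_2'$. We need two disjoint paths from $\{u_1,u_2\}$ to $\{u_1',u_2'\}$ avoiding the rest of the model, plus an $S$-vertex on the new column. We choose $u_1',u_2'$ to be the attachment vertices of $B$ towards the part of the graph containing $u_1,u_2$. For example, if $B$ is separated from $\{u_1,u_2\}$ by a $2$-separation, they are the two separating vertices, and Menger (via $2$-connectivity of $G$) supplies the disjoint paths. To put an $S$-vertex in the new column, we can fall back on the $k=1$ argument: route one of the two linking paths through $S$, or absorb an $S$-vertex reached through $S_B$. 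If $B$ meets $\{u_1,u_2\}$ differently, we handle that case by deleting them first and using the cut structure.

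The main obstacle is guaranteeing that the linkage from $u_1,u_2$ to $B$ exists disjointly from the recursive model and carries an $S$-vertex. When this fails, we instead get many disjoint connected pieces meeting $S$ and both linking paths. Then \Cref{lemma:td2:packing_of_steps_makes_a_ladder} with $P_1,P_2$ the two paths yields $(L_\ell,T_\ell)$, a contradiction. So $f(\ell,k)$ is defined recursively as $f(\ell,k-1)$ plus a loss bounded by $f_{\ref{lemma:td2:packing_of_steps_makes_a_ladder}}(\ell)$ times the width bound $f_{\ref{thm:rooted:gridminor}}(2\ell)$, accounting for the deleted hub vertices.
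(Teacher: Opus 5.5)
Your base case is correct, and you have the right ingredients: bounded torso treewidth via \Cref{thm:rooted:gridminor}, deletion of a bounded hitting set, \Cref{lem:S_B}, and \Cref{lemma:td2:packing_of_steps_makes_a_ladder}. But the inductive step has a genuine gap, exactly at the point you call the main obstacle.

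The hub you delete must be chosen \emph{before} the block $B$. Yet the family it is supposed to hit is described via ``the two linking paths'' from $\{u_1,u_2\}$ to $B$, which only exist once $B$ is known; this is circular. Moreover, ``when the linkage fails, we get many disjoint connected pieces meeting $S$ and both linking paths'' is not a valid implication, since a failed linkage does not produce a packing. What is needed is an Erd\H{o}s--P\'osa dichotomy relative to paths fixed in advance. The paper fixes two internally disjoint $u_1$--$u_2$ paths $P_1,P_2$ and, for a fixed $s\in S$, two $s$--$\{u_1,u_2\}$ paths $Q_1,Q_2$ meeting only in $s$. It lets $\calH$ (resp.\ $\calH'$) be the connected subgraphs meeting $S$ and both sets $V(P_i)\setminus\{u_1,u_2\}$ (resp.\ both sets $V(Q_j)\setminus\{s\}$). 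Combining \Cref{lemma:td2:packing_of_steps_makes_a_ladder} with \Cref{lem:tw:erdos:posa} then gives small hitting sets $Z,Z'$. After deleting $Z^\circ=Z\cup Z'\cup\{s,u_1,u_2\}$, the region $G_0$ carrying the recursive model is connected, meets $S$, and avoids $Z\cup Z'$, so $G_0\notin\calH\cup\calH'$. Hence $G_0$ is disjoint from a whole path $P_{i_0}$ and a whole path $Q_{j_0}$. Nothing in your sketch plays this role.

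This is exactly what your construction of the new column lacks. Two disjoint paths from $\{u_1,u_2\}$ to $\{u_1',u_2'\}$ give the two horizontal edges to the old first column. However, the two new branch sets must also be adjacent to each other (the vertical edge of the new column), and one of them must contain a vertex of $S$, all disjoint from the model of $L_{k-1}$. In the paper, $P_{i_0}$ provides the adjacency: the linking paths $R_1,R_2$ are followed only until they first hit $P_{i_0}$, and $P_{i_0}$ is split between the two new branch sets. An initial segment of $Q_{j_0}$ provides the $S$-vertex. Your ``route one of the linking paths through $S$'' has no such guarantee.

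There is also a collision you do not address: extending the recursive model beyond $B$ along paths witnessing $S_B$ may run into the linking paths. The paper handles this in three steps. It chooses $R_1,R_2$ to minimize $|V(R_1)\cap Z^\circ|+|V(R_2)\cap Z^\circ|$. It attaches to $G'$ only the components of $G-(Z^\circ\cup V(G'))$ with a neighbor in $V(G')\setminus\{u_1',u_2'\}$. It then applies induction to $(G',S'\setminus\{u_1',u_2'\})$, which is why the recursion carries an extra $+2$.
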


\begin{proof}
    Let $f_{\ref{lemma:td2:main_induction}}(\cdot,\cdot)$ be defined by 
    induction as follows.
    For all positive integers $k,\ell$, let
    \[
        f_{\ref{lemma:td2:main_induction}}(\ell,k) =
        \begin{cases}
            0 & \textrm{if $k=1$,} \\
            2(f_{\ref{lemma:td2:packing_of_steps_makes_a_ladder}}(\ell)-1)
                \cdot f_{\ref{thm:rooted:gridminor}}(2\ell) 
            + f_{\ref{lemma:td2:main_induction}}(\ell,k-1) + 6 & 
                        \textrm{if $k \geq 2$.}
        \end{cases}
    \]

    We proceed by induction on $k$.
    Let $k$ and $\ell$ be positive integers,
    let $G$ be a $2$-connected graph,
    let $S \subseteq V(G)$,
    and let $u_1,u_2 \in V(G)$ distinct.
    Assume that $\td_2(G,S) > f_{\ref{lemma:td2:main_induction}}(\ell,k)$
    and that $(L_\ell,T_\ell)$ is not a minor of $(G,S)$.
    By \Cref{thm:rooted:gridminor},
    this implies that $\tw(G,S) < f_{\ref{thm:rooted:gridminor}}(2\ell)$.

    First suppose $k=1$.
    Since $G$ is $2$-connected, and because $S$ is not empty (since $\td_2(G,S)>0$),
    there exists $s \in S$ and two paths $P_1,P_2$ from $s$ to $\{u_1,u_2\}$
    in $G$ such that $V(P_1) \cap V(P_2) = \{s\}$ and $u_i\in V(P_i)$ for $i\in[2]$.
    Without loss of generality, $s \neq u_1$.
    Then, $(V(P_1)\setminus \{s\}, V(P_2))$ is a model of $(L_1,T_1)$ in $(G,S)$ as wanted.
    Thus, we assume that $k \geq 2$.

    Since $G$ is 2-connected, there exist two internally disjoint $u_1$--$u_2$ paths $P_1,P_2$.
    Let $\mathcal{H}$ be the family of all the connected subgraphs $H$ of $G$
    that intersect $S$ and at least one internal vertex of each path $P_i$, i.e., $V(H) \cap S \neq \emptyset$, $V(H) \cap (V(P_1) \setminus \{u_1,u_2\}) \neq \emptyset$, and $V(H) \cap (V(P_2) \setminus \{u_1,u_2\}) \neq \emptyset$.
    By \Cref{lemma:td2:packing_of_steps_makes_a_ladder},
    there are no $f_{\ref{lemma:td2:packing_of_steps_makes_a_ladder}}(\ell)$
    pairwise disjoint members of $\mathcal{H}$.
    Hence, by \Cref{lem:tw:erdos:posa},
    there exists $Z \subseteq V(G)$
    such that
    \begin{enumerate}
        \item $|Z| \leq (f_{\ref{lemma:td2:packing_of_steps_makes_a_ladder}}(\ell)-1) \cdot f_{\ref{thm:rooted:gridminor}}(2\ell)$, and
        \item $V(H) \cap Z \neq \emptyset$ for every $H \in \mathcal{H}$.
    \end{enumerate}

    Fix a vertex $s\in S$.
    Since $G$ is $2$-connected,
    there are two internally disjoint \stpaths{s}{\{u_1,u_2\}} $Q_1$ and $Q_2$ in $G$.
    Then, let $\mathcal{H}'$ be the family of all the connected subgraphs $H$ of $G$ that intersect $S$ and at least one vertex of each path $Q$ different than $s$, i.e.,
    $V(H) \cap S \neq \emptyset$, $V(H) \cap (V(Q_1) \setminus \{s\}) \neq \emptyset$, and $V(H) \cap (V(Q_2) \setminus \{s\}) \neq \emptyset$.
    Similarly as for $P_1,P_2$,
    there exists $Z' \subseteq V(G)$ such that
    \begin{enumerate}
        \item $|Z'| \leq (f_{\ref{lemma:td2:packing_of_steps_makes_a_ladder}}(\ell)-1) \cdot f_{\ref{thm:rooted:gridminor}}(2\ell)$, and
        \item $V(H) \cap Z' \neq \emptyset$ for every $H \in \mathcal{H}'$.
    \end{enumerate}
    We set $Z^\circ = Z\cup Z'\cup\{s,u_1,u_2\}$; keep in mind that $|Z^\circ|\le  2(f_{\ref{lemma:td2:packing_of_steps_makes_a_ladder}}(\ell)-1) \cdot f_{\ref{thm:rooted:gridminor}}(2\ell)+3$.
    We have
    \[
        \td_2(G-Z^\circ, S \setminus Z^\circ) 
        \geq \td_2(G,S) - |Z^\circ| 
        > f_{\ref{lemma:td2:main_induction}}(\ell,k-1)+2.
    \]
    Therefore, by \cref{lem:S_B}, there exists a block $G'$ of $G-Z^\circ$
    such that,
    for 
    \[
        S' = \{u \in V(G') \mid \text{there is a \stpath{u}{S} $P$ in $G-Z^\circ$ with $V(G') \cap V(P) = \{u\}$}\},
    \]
    we have $\td_2(G',S') > f_{\ref{lemma:td2:main_induction}}(\ell,k-1)+2$.
    Intuitively, $S'$ is the set of all vertices in $V(G')$ that are either in $S \setminus Z^\circ$ or can be connected to a vertex in $S \setminus Z^\circ$ via a sequence of blocks of $G-Z^\circ$.
    
    We next focus on fixing two vertices in $G'$ and applying the induction hypothesis in order to find a ladder $L_{k-1}$ that can be extended by one step in order to include $u_1$ and $u_2$ (and $s$).
        This is done as follows.
        Since $G$ is 2-connected, there exist two vertex-disjoint $\{u_1,u_2\}$--$V(G')$ paths $R_1,R_2$ in $G$.
        Among all such pairs, allowing their endpoints in $G'$ to vary, choose one minimizing
        \[
            |V(R_1)\cap Z^\circ|+|V(R_2)\cap Z^\circ|.
        \]
        Label the paths so that $u_i$ is an endpoint of $R_i$ for each
        $i\in[2]$, and let $u_i'$ be its endpoint in $V(G')$.
        Thus $V(R_i)\cap V(G')=\{u_i'\}$.
        
        Let $G_0$ be the subgraph of $G$ induced by the union of $V(G')$
        and the vertex sets of all components of
        $G-(Z^\circ\cup V(G'))$ having a neighbor in
        $V(G')\setminus\{u_1',u_2'\}$. See \cref{fig:td2_G0}.
        We claim that $(V(R_i)\setminus\{u_i'\})\cap V(G_0)=\emptyset$ for each $i \in [2]$.
        
        Suppose otherwise. Then some component $C$ used in the definition
        of $G_0$ meets $V(R_1)\cup V(R_2)$.
        Since $G'$ is a block of $G-Z^\circ$, the component $C$ has exactly
        one neighbor $v$ in $G'$, and
        $v\notin\{u_1',u_2'\}$.
        Choose a path $Q$ from $v$ to a vertex
        $w\in V(C)\cap(V(R_1)\cup V(R_2))$ such that
        $V(Q)\setminus\{v\}\subseteq V(C)$ and
        $Q$ meets $R_1\cup R_2$ only at $w$.
        Say $w\in V(R_i)$.
        Replace the $w$--$u_i'$ subpath of $R_i$ by $Q$, traversed from
        $w$ to $v$. This gives another pair of vertex-disjoint
        $\{u_1,u_2\}$--$V(G')$ paths.
        The discarded subpath must contain a vertex of $Z^\circ$:
        otherwise, since $C$ has no neighbor in $G'$ other than $v$,
        it could not reach $u_i'\ne v$ without meeting $G'$ before
        its endpoint. On the other hand, $Q$ is disjoint from $Z^\circ$.
        Consequently, the new pair has a strictly smaller value of
        $|V(R_1)\cap Z^\circ|+|V(R_2)\cap Z^\circ|$, a contradiction.
        This proves the claim.
        Additionally, $\{s,u_1,u_2\}\cap V(G_0)=\emptyset$,
        since $V(G_0)\cap Z^\circ=\emptyset$.


    \begin{figure}[tp]
        \centering
        \includegraphics{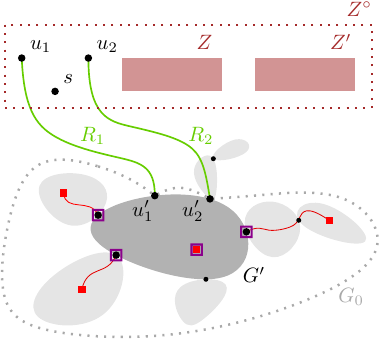}
        \caption{The construction of $G_0$ in the proof of \Cref{lemma:td2:main_induction}. The red vertices are the elements of $S$, and the purple ones are the elements of $S'$.}
        \label{fig:td2_G0}
    \end{figure}

    Since $\td_2(G',S'\setminus \{u'_1,u'_2\}) > f_{\ref{lemma:td2:main_induction}}(\ell,k-1)$,
    and by the induction hypothesis,
    there is a model $(B^0_x \mid x \in V(L_{k-1}))$ of $L_{k-1}$ in $G'$            such that 
    \begin{enumerate}
        \item for every column $C$ of $L_{k-1}$,
            $(S' \setminus \{u'_1,u'_2\}) \cap (\bigcup_{c \in C} B^0_c) \neq \emptyset$, and
        \item if $\{c^0_1,c^0_2\}$ is the first column of $L_{k-1}$,
            then $u'_1 \in B^0_{c^0_1}$ and $u'_2 \in B^0_{c^0_2}$, or vice-versa.
            Up to relabeling $c^0_1,c^0_2$, we assume $u'_1 \in B^0_{c^0_1}$
            and so $u'_2 \in B^0_{c^0_2}$.
    \end{enumerate} 
    By construction, this model can be extended into a model $(B^1_x \mid x \in V(L_{k-1}))$ of $L_{k-1}$ in $G_0$ such that
    \begin{enumerate}
        \item for every column $C$ of $L_{k-1}$,
            $S \cap (\bigcup_{c \in C} B^1_c) \neq \emptyset$, and
        \item $u_1' \in B^1_{c^0_1}$ and $u_2' \in B^1_{c^0_2}$.
    \end{enumerate}   

    By the properties of $Z$ and $Z'$,
    and because $V(G_0)$ is disjoint from $Z \cup Z'$,
    $G_0$ is not a member of $\mathcal{H} \cup \mathcal{H}'$.
    Since $V(G_0)$ intersects $S$,
    we deduce that
    \begin{enumerate}
        \item there is $i_0 \in [2]$ such that $P_{i_0}$ is disjoint from $V(G_0)$ and
        \item there is $j_0 \in [2]$ such that $Q_{j_0}$ is disjoint from $V(G_0)$.
    \end{enumerate}
    We fix such an $i_0 \in [2]$.
    We follow the path $R_1$ (resp. $R_2$) starting from $u_1'$ (resp, $u_2'$) until it reaches a vertex in $V(P_{i_0})$, which we denote by $w_1$ (resp. $w_2$). Such vertices $w_1$ and $w_2$ exist because $u_1$ (resp. $u_2$) are common vertices of $R_1$ (resp. $R_2$) and $P_{i_0}$. 
    Note that there is an $i\in[2]$ such that the subpath of $P_{i_0}$ starting from $u_i$ and finishing at $w_1$ does not contain $w_2$. We fix such $i\in[2]$.
    We consider the following path from $u_1'$ to $u_i$: We start following $R_1$ starting from $u_1'$, and then, when having reached $w_1$, we continue along $P_{i_0}$ until we reach $u_i$. We denote this path by $Y_1$. Also, we consider the graph $Y_2$ obtained by the union of $P_{i_0} - V(Y_1)$ and the subpath of $R_2$ starting from $u_2'$ and finishing at $w_2$. 
    See \Cref{fig:ladder_model}.
    Observe that both $Y_2- \{u_2'\}$ and $Y_1-\{u_1'\}$ are connected subgraphs of $G$ that are disjoint from $V(G_0)$, and have (at least) an edge between them. Our aim now is to extend the model of $L_{k-1}$ in $G_0$ to a model of $L_k$ in $G$ by including the sets $V(Y_2)- \{u_2'\}$ and $V(Y_1)-\{u_1'\}$. Still, we need an extra argument so as to ensure condition (i) from the statement of the lemma, i.e., extend one of the new bags of the model so as to include $s$.

    \begin{figure}[tp]
        \centering
        \includegraphics{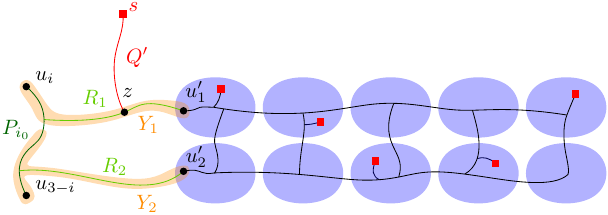}
        \caption{The construction of the desired model of $L_k$ in
            the proof of \Cref{lemma:td2:main_induction}.
            The vertices in $S$ are depicted by red squares.
            We are here in the case $j=1$, i.e. $z \in V(Y_1)$.}
        \label{fig:ladder_model}
    \end{figure}

    For this, we consider a certain subpath of $Q_{j_0}$: following the path $Q_{j_0}$ starting from $s$, let $z$ be the first vertex of $V(Y_1)\cup V(Y_2)$ reached by $Q_{j_0}$; such vertex exists because $u_{j_0}$ is a common endpoint of $Q_{j_0}$ and $P_{i_0}$, and $V(P_{i_0}) \subseteq V(Y_1) \cup V(Y_2)$.
    We denote by $Q'$ the subpath of $Q_{j_0}$ starting from $s$ and finishing at $z$.
    Let $j\in[2]$ such that $z\in V(Y_j)$. 
    We proceed to the construction of a model $L_k$ in $G$.

    We label the vertices $L_k$ by adding to $L_{k-1}$ a new column $(c_1,c_2)$ next to the first column of $L_{k-1}$, namely $(c^0_1,c^0_2)$.
    Then, for every $x \in V(L_k)$, let
    \[
    B_x =
    \begin{cases}
        B^1_x & \textrm{if $x \in V(L_{k-1})$,} \\
        \big(V(Y_j)\setminus \{u_j'\}\big) \cup V(Q') & \textrm{if $x = c_{j}$,} \\
        V(Y_{3-j})\setminus \{u_{3-j}'\} & \textrm{if $x = c_{3-j}$.}
    \end{cases}
    \]
    This gives the desired model of $L_k$ in $G$.
\end{proof}

\begin{proof}[Proof of \Cref{thm:rooted:tdtwo:ladder}]
    We set
    for every positive integer $k$
    \[
        f(k) = 
        \begin{cases}
            2 & \textrm{if $k=1$,} \\
            f_{\ref{lemma:td2:main_induction}}(k,2k-1)+1 & \textrm{otherwise.}
        \end{cases}
    \]

    Let $k$ be a positive integer,
    and let $(G,S)$ be an annotated graph with $\td_2(G,S) \geq f(k)$.
    By \cref{lem:S_B}, we can assume that $G$ has only one block, namely $G$ itself.
    If $k=1$, then clearly there is a vertex in $S$ which is not isolated,
    and so $(L_1,T_1)$ is a minor of $(G,S)$.
    Now suppose $k \geq 2$.
    Fix two vertices $u_1,u_2$ of $G$. 
    (This is possible since $2 \leq \td_2(G,S) \leq |V(G)|$.)
    By \Cref{lemma:td2:main_induction}, either
    \begin{enumerate}[label={(\alph*)}]
        \item there is a model of $(L_k,T_k)$ in $(G,S)$, or
        \item there is a model $(B_x \mid x \in V(L_{2k-1}))$ 
            of $L_{2k-1}$ in $G$ such that 
            \begin{enumerate}
                \item for every column $C$ of $L_{2k-1}$,
                    $S \cap (\bigcup_{c \in C}B_c) \neq \emptyset$, and
                \item if $\{c_1,c_2\}$ is the first column of $L_{2k-1}$,
                    then $u_1 \in B_{c_1}$ and $u_2 \in B_{c_2}$ or vice-versa.
            \end{enumerate}
    \end{enumerate}
    In the first case, we are done.
    Now suppose we are in the second case.
    Then, by the pigeonhole principle, 
    there is a row $R$ of $L_{2k-1}$ such that
    for at least $k$ vertices $x$ in $R$ we have 
    $S \cap B_x \neq \emptyset$. 
    Contracting in this model all the other columns with 
    one of their neighboring columns, 
    we find a model of $(L_k, T_k)$ in $(G,S)$.
\end{proof}


\subsection{Hadwiger number}

The \defin{Hadwiger number} of a graph $G$, denoted by \defin{$\had(G)$}, is the maximum number $k\in\mathbb{N}$ such that $G$ contains $K_k$ as a minor.
The main result of this subsection is the following theorem.
Together with \cref{obs:transfer-large-value}, \cref{thm:had} implies the Hadwiger number statement in \cref{thm:main}.

\begin{theorem}\label{thm:had}
    For every annotated graph $(G,S)$ and for every positive integer $k$, if $\had(G,S) \geq 2k$, 
    then $K_k$ is an $S$-rooted minor of $G$.
\end{theorem}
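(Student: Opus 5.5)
We prove the contrapositive in a lifted form. Since $\had$ is minor-monotone and every $S'\supseteq S$ gives $\had(\torso_G(S'))\ge \had(G,S)\ge 2k$, we may take $S'=S$. Then $\torso_G(S)$ contains a model $(A_1,\dots,A_{2k})$ of $K_{2k}$ with $A_i\subseteq S$. From it we build an $S$-rooted model of $K_k$ in $G$.

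\emph{Reduction to a bipartite hub structure.} Contract every component $C$ of $G-S$ to a single vertex $h_C$, called a \emph{hub}, and call the result $G^*$. An $S$-rooted model of $K_k$ in $G^*$ lifts to one in $G$ by replacing $h_C$ with $V(C)$. Moreover $\torso_{G^*}(S)=\torso_G(S)$. So we may assume that $G-S$ is an independent set $I$ of hubs, and that $\torso_G(S)$ is $G[S]$ together with a clique on $N(h)$ for every $h\in I$. Each edge of $\torso_G(S)$ used by the model, whether inside some $A_i$ or between two of them, is now either an edge of $G[S]$ or is realized by a hub $h$ adjacent to both of its ends.

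\emph{Choosing a minimal model.} Among all models of $K_{2k}$ in $\torso_G(S)$, choose $(A_1,\dots,A_{2k})$, and for each torso edge used a realizing hub, so as to minimize the number of hubs used. Call a hub \emph{internal} if it realizes an edge inside some $A_i$. Minimality should give that an internal hub for $A_i$ is not needed for any other purpose; if it were, we could merge or reroute and save a hub. In particular, whenever a hub $h$ is internal for $A_i$ and adjacent to $A_j$ with $j\ne i$, the pair $\{i,j\}$ is already adjacent through $h$ once $h$ is put into the branch set of $A_i$. The genuine conflicts are hubs $h$ whose neighborhood meets three or more branch sets and which are required simultaneously for several pairs.

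\emph{Halving to resolve conflicts.} This is where the factor $2$ is spent. Pair up the $2k$ branch sets into $k$ groups $\{A_{2i-1},A_{2i}\}$. Each group $i$ gets a new branch set $B_i$, consisting of $A_{2i-1}\cup A_{2i}$, the internal hubs of both, and one hub linking them. For every pair of groups $i\ne j$ we need an edge of $G$ between $B_i$ and $B_j$. Among the four torso adjacencies between $\{A_{2i-1},A_{2i}\}$ and $\{A_{2j-1},A_{2j}\}$ there is some freedom in which one is used, and each realizing hub can be assigned to either of its two ends. We would set up an orientation or assignment problem: every hub goes to exactly one group, and every pair of groups must remain adjacent. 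We would then prove via a counting or Hall-type argument that the redundancy of $K_{2k}$ over $K_k$ makes such an assignment possible. Every $B_i$ meets $S$, so the resulting model is $S$-rooted.

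\emph{Main obstacle.} The hardest step is this last assignment step, namely showing that hubs shared by many branch sets can be distributed without destroying connectivity or adjacency. If the direct halving argument fails, we would instead build the $K_k$ model greedily, one branch set at a time. At each step we take the two currently ``cheapest'' $A_i$ together with the hubs touching them, and delete these from consideration. We would argue that each step consumes at most two of the original branch sets, which again costs a factor $2$. As a fallback, the statement also follows from the colorful-minor results cited after \Cref{thm:main}.
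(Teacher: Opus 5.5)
\textbf{The proposal has a genuine gap, and it is in your very first step.} The hypothesis $\had(G,S)\ge 2k$ says that $\had(\torso_G(S'))\ge 2k$ for \emph{every} $S'\supseteq S$. By keeping only $S'=S$, you reduce to the claim ``if $\torso_G(S)$ has a $K_{2k}$ minor, then $K_k$ is an $S$-rooted minor of $G$''. That claim is false.

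Take $G=K_{1,n}$ with $n\ge 2k$, $k\ge 3$, and $S$ the set of leaves; this is the example from the introduction. Then $\torso_G(S)=K_n$. But a connected set avoiding the centre is a single leaf, so among any three branch sets two are non-adjacent leaves, and $K_3$ is not an $S$-rooted minor. Consistently, $\had(G,S)\le\had(G)=2$ via $S'=V(G)$.

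In your language, this is one hub realizing all $\binom{2k}{2}$ adjacencies. So the assignment/Hall-type step you flag as the main obstacle is impossible, not merely hard, and the greedy fallback fails on the same example. The hub contraction is also lossy on its own. If $G$ is $K_{2k}$ with a pendant leaf at each vertex and $S$ is the set of leaves, then $G$ has an $S$-rooted $K_{2k}$ minor, so even the full hypothesis holds. Yet $G^*$ is a star with no $S$-rooted $K_3$. Finally, the colorful-minor fallback is a citation, not a proof, and it gives only a polynomial bound rather than $2k$.

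\textbf{The missing idea is to use the freedom in $S'$.} You should prove the contrapositive by \emph{constructing}, when there is no $S$-rooted $K_k$, a set $S'\supseteq S$ with $\torso_G(S')$ free of $K_{2k}$ minors. The paper does this by induction on $|V(G)|$.
\begin{enumerate}
\item If $G$ itself has no $K_{2k}$ minor, take $S'=V(G)$.
\item Otherwise fix a model $\mathcal{M}$ of $K_{2k}$. At most $k$ of its branch sets meet $S$, since otherwise they already form an $S$-rooted $K_k$; so at least $k$ branch sets avoid $S$.
\item A minimal-counterexample argument with Menger's theorem (\cref{lemma:appendix:rooted_model_or_separation}) then gives a separation $(A,B)$ of order at most $k-1$ with $S\subseteq V(A)$ and a whole branch set inside $V(B)\setminus V(A)$.
\item Inside $B$, \cref{lemma:find_rooted_cliques} gives a separation $(C,D)$ with $V(D)\setminus V(C)\neq\emptyset$ whose adhesion $X$ is the root set of an $X$-rooted $K_{|X|}$ minor of $D$.
\item Therefore $G'=\torso_G(V(A)\cup V(C))$ is a minor of $G$ with fewer vertices in which $S$ is untouched, so $G'$ has no $S$-rooted $K_k$. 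Induction provides $S'$ with $\torso_G(S')=\torso_{G'}(S')$ free of $K_{2k}$ minors.
\end{enumerate}
In this argument the factor $2$ serves to leave at least $k$ branch sets disjoint from $S$ to run Menger's theorem against. It is not used to merge branch sets in pairs.
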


Note that such a statement with $2k$ replaced by $\Omega(k^{2300})$ follows from~\cite[Theorem~1.2]{ColorfulMinors}.
The exact statement of~\Cref{thm:had} follows from a result of Dujmović et al.~\cite[Corollary~13]{GMT-revisited}, which was inspired by the work of Kawarabayashi on rooted minors~\cite{K04}, which is itself based on \cite[Statement~5.4]{GMXIII} by Robertson and Seymour.
We include the proof of~\Cref{thm:had}, which is an adaptation of the \emph{generic folio technique} and, in particular, of the argument in~\cite{GMT-revisited}.

Given a graph $G$ and $F \subset \binom{V(G)}{2}$, 
we denote by \defin{$G \cup F$} the graph $(V(G), E(G) \cup F)$.

\begin{lemma}\label{lemma:appendix:rooted_model_or_separation}
    Let $k$ be a positive integer.
    Let $(G,S)$ be an annotated graph and let $\mathcal{M} = (U_x \mid x \in V(K_{2k}))$ be a model of $K_{2k}$
    in $G \cup \binom{S}{2}$ such that $S$ intersects at most $k$ branch sets in $\mathcal{M}$, i.e., $|\{x \in V(K_{2k}) \mid U_x \cap S \neq \emptyset\}| \leq k$.
    Then one of the following holds:
    \begin{enumerate}
        \item $K_k$ is an $S$-rooted minor of $G$, or \label{lem:item:rooted-model}
        \item there is a separation $(A,B)$ of $G$ of order at most $k-1$ such that
            $S \subseteq V(A)$, and there exists $z \in V(K_{2k})$ with $U_z \subseteq V(B) \setminus V(A)$. \label{lem:item:separation}
    \end{enumerate}
\end{lemma}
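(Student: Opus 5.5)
The plan is to apply Menger's theorem between $S$ and a set of representatives of the branch sets that avoid $S$. Let $Y = \{x \in V(K_{2k}) \mid U_x \cap S = \emptyset\}$. Then $|Y| \geq k$; fix a subset $Y_0 \subseteq Y$ with $|Y_0| = k$. Since $G$ has no edges of $\binom{S}{2}$ inside branch sets that miss $S$, each $G[U_y]$ with $y \in Y_0$ is connected in $G$ itself. Contract each such $U_y$ to a single vertex $t_y$, or equivalently work with Menger's theorem for the vertex sets $S$ and $\{U_y\}$ via an auxiliary construction. We then ask whether there are $k$ pairwise vertex-disjoint paths in $G$, each from $S$ to a distinct $U_y$ ($y \in Y_0$). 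Each path should be internally disjoint from the other sets $U_{y'}$, $y'\in Y_0$.

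If no such linkage exists, Menger's theorem applied to $S$ and $T^* = \bigcup_{y\in Y_0} U_y$ with the $U_y$ contracted gives a set $X$ of fewer than $k$ vertices separating $S$ from the contracted vertices. Uncontracting, $X$ meets at most $k-1$ of the sets $U_y$, since a contracted vertex in $X$ is replaced by a whole branch set; to keep the order small, I would instead pick a vertex representative, or argue by choosing a separator avoiding branch sets. Some $z \in Y_0$ has $U_z \cap X = \emptyset$, and $U_z$ is then in a component of $G - X$ containing no vertex of $S$. Let $A$ be $X$ together with the components of $G - X$ meeting $S$, and let $B$ be the rest together with $X$. This gives \ref{lem:item:separation}. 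The care point is that the contracted Menger separator may contain contracted vertices. I would handle this by splitting $X$ into original vertices and contracted vertices $t_y$. Taking $A$ to be everything reachable from $S$ in $G-(X\cap V(G))$ with the chosen $U_y$'s removed does not work directly, so the fix is to take the separator of size $\leq k-1$ and replace each $t_y$ by nothing: the paths through $U_y$ stop there anyway. More cleanly, I would apply Menger to $S$ versus the set of all vertices of $\bigcup_{y\in Y_0}U_y$, using the known version where a separator smaller than the number of branch sets exists unless $k$ disjoint paths hit distinct branch sets. This is the standard "linkage to distinct sets" form, proved by adding a new vertex $t_y$ adjacent to all of $U_y$ and applying Menger to $S$ versus $\{t_y\}$ with the $t_y$ undeletable.

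If the linkage exists, with paths $P_y$ from $s_y\in S$ to $U_y$ ($y\in Y_0$), build an $S$-rooted $K_k$ model in $G$ with branch sets $U_y \cup V(P_y)$, after suitably shortening the paths. The difficulty is that the paths may run through the other branch sets $U_x$, $x\notin Y_0$, and the adjacencies of $\mathcal{M}$ may use edges of $\binom{S}{2}$ lying inside or between the branch sets $U_x$ with $U_x\cap S\neq\emptyset$. To fix this, I would choose $Y_0$ and the linkage minimizing the total path length or the intersection with $\bigcup_{x\notin Y_0}U_x$. Alternatively, I would use the $k$ remaining branch sets as "spare" material. Each $U_y$, $y \in Y_0$, is adjacent in $G$ to each other $U_{y'}$ via genuine $G$-edges, because edges between sets avoiding $S$ cannot come from $\binom{S}{2}$. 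So the sets $U_y$ for $y\in Y_0$ already form a $K_k$ model in $G$. It then suffices to root each one: truncate $P_y$ at its first vertex in $\bigcup_{y'\in Y_0}U_{y'}$, which must be in $U_y$ by the choice of linkage to distinct sets with internal vertices avoiding the other targets. Then add $V(P_y)$ to $U_y$. The enlarged sets stay disjoint and connected, remain pairwise adjacent, and each contains $s_y\in S$, giving \ref{lem:item:rooted-model}. With the Menger formulation chosen so that paths end at their first contact with the target union, the main obstacle is making sure the separation in the other case has order at most $k-1$ and leaves some whole $U_z$ strictly on the $B$ side.
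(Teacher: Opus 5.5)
Two of your observations are correct: the branch sets avoiding $S$ are connected and pairwise adjacent in $G$ itself, and a linkage whose paths avoid the other targets would root them. The gap is the Menger step, where the two halves of your dichotomy need incompatible formulations.

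With new vertices $t_y$ adjacent to all of $U_y$, the separator half works: at most $k-1$ vertices miss some $U_z\cup\{t_z\}$, so their trace in $G$ cuts all of $U_z$ off from $S$. But the $k$ disjoint paths to distinct $t_y$ may run through other branch sets $U_{y'}$. Truncating at the first vertex of $\bigcup_{y'\in Y_0}U_{y'}$ can then put two paths into the same branch set and leave another unrooted. The property you invoke in the linkage case (internal vertices avoid the other targets) is what contracting each $U_y$ to $t_y$ gives instead. In that version, though, the separator may contain contracted vertices, i.e.\ entire branch sets, and dropping them (``replace each $t_y$ by nothing'') destroys the separation. No choice of $Y_0$ or of shortest paths repairs this.

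In fact your argument only uses that the $S$-avoiding branch sets form a $K_k$-model in $G$, and from that alone the conclusion is false. Take $k=4$, let $G$ be a large grid, let $S$ be its outer-face vertices, and take a $K_4$-model in a $3\times 3$ subgrid far from $S$: the centre, plus the surrounding $8$-cycle cut into three arcs. An $S$-rooted $K_4$ would give a $K_5$-minor in the planar graph obtained by adding a vertex adjacent to all of $S$. On the other hand, each branch set contains a vertex with four paths to $S$ meeting only at that vertex, so no three vertices outside it separate it from $S$. In this example the $t_y$-linkage exists, but the path to the centre must cross the $8$-cycle; and the contracted separator is $\{t_1,t_2,t_3\}$, i.e.\ eight vertices of $G$. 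So the whole $K_{2k}$-model in $G\cup\binom{S}{2}$ must enter the argument.

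The paper uses a minimal counterexample. It first rules out separations of order at most $k$ with $S\subseteq V(A)$, $V(A)\setminus S\neq\emptyset$ and some $U_z\subseteq V(B)\setminus V(A)$. It does this by recursing into $B$ with $V(A)\cap V(B)$ as the new annotated set; the condition that at most $k$ branch sets meet it survives because this set has at most $k$ vertices, which is where $2k$ is needed. It then contracts edges inside branch sets not contained in $S$, and the first claim absorbs the one-unit loss in order when a separation is uncontracted. Hence every $S$-avoiding branch set is a single vertex, and these form a clique $T$ of size at least $k$. Plain Menger between $S$ and $T$ finishes: disjoint $S$--$T$ paths end at distinct vertices of a clique, and a separator of size less than $k$ misses a vertex of $T$, which is an entire branch set.
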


\begin{proof}
    Suppose that the statement does not hold, 
    and consider a counterexample $(G,S,\mathcal{M})$ with $|V(G)|$ minimum.

    \begin{claim}\label{claim:appendix:rooted_clique:t_separation}
        There is no separation $(A,B)$ of $G$ of order at most $k$ such that $S \subseteq V(A)$,
        $V(A) \setminus S \neq \emptyset$,
        and $U_z \subseteq V(B) \setminus V(A)$ for some $z \in V(K_{2k})$.
    \end{claim}

    \begin{proofclaim}
        Suppose that such a separation $(A,B)$ exists, and let $z$ be as in the statement.
        Since $G$, $S$, and $\mathcal{M}$ are a counterexample to the lemma, 
        there is no separation $(A',B')$ of $G$ of order at most $k-1$ with $S \subseteq V(A')$ and $V(B) \subseteq V(B')$.
        Therefore, by Menger's theorem, there is a family $(P_u \mid u \in V(A) \cap V(B))$
        of pairwise disjoint \stpaths{S}{(V(A) \cap V(B))} in $G$ with $u \in V(P_u)$ for every $u \in V(A) \cap V(B)$.
        Note that, for all $u \in V(A) \cap V(B)$, $P_u$ is a path in $A$.
        Now, for every $x \in V(K_{2k})$, let $U'_x = U_x \cap V(B)$. 
        Since $|V(A)\cap V(B)|\le k$ and $U_z\subseteq V(B)\setminus V(A)$, $U'_x$ is non-empty for every $x\in V(K_{2k})$. Note that every path in $G[A]$ between vertices in $V(A)\cap V(B)$ can be simulated by an edge between these two vertices and therefore $\mathcal{M}' = (U'_x \mid x \in V(K_{2k}))$ is a model of $K_{2k}$ in $B \cup \binom{V(A)\cap V(B)}{2}$.
        Note that 
        \[
            |\{x \in V(K_{2k}) \mid U_x' \cap V(A) \cap V(B) \neq \emptyset\}| \leq |V(A) \cap V(B)| \leq k.
        \]
        We next show that $|V(B)| < |V(G)|$, which implies that the statement of the lemma holds for $B$, $V(A) \cap V(B)$, and $\calM'$ because of minimality of the counterexample. To see why $|V(B)| < |V(G)|$ holds, consider a vertex $w\in V(A)\setminus S$; this exists because of the assumption in the statement of the claim. If $w\notin V(B),$ then we get directly that $|V(B)| < |V(G)|$. If $w\in V(A)\cap V(B)$, then the endpoint of $P_w$ in $S$ does not belong to $V(B)$ and hence we get $|V(B)| < |V(G)|$. Thus,  the statement of the lemma holds for $B$, $V(A) \cap V(B)$, and $\calM'$.

        If~\ref{lem:item:rooted-model} holds, then
        there is a $(V(A)\cap V(B))$-rooted model $(V_x \mid x \in V(K_k))$ of $K_k$ in $B$.
        For each $x \in V(K_k)$, let $u_x \in V(A) \cap V(B) \cap V_x$. 
        Then, $\big(V_x \cup V(P_{u_x}) \mid x \in V(K_k)\big)$ is an $S$-rooted model of $K_k$ in $G$ contradicting $G$, $S$, and $\mathcal{M}$ being a counterexample.

        If~\ref{lem:item:separation} holds, there is a separation $(A',B')$ of $B$ with $V(A)\cap V(B) \subseteq V(A')$
        and $U'_{z'} \subseteq V(B') \setminus V(A')$ for some $z' \in V(K_{2k})$.
        But then we have $U'_{z'} = U_{z'}$,
	    and $(A \cup A', B')$ is a separation of $G$ of order at most $k-1$,
        with $S \subseteq V(A \cup A')$
        and $U_{z'} \subseteq V(B') \setminus V(A \cup A')$,
        again contradicting the fact that
        $(G,S,\mathcal{M})$ is a counterexample.
    \end{proofclaim}

    \begin{claim}\label{claim:appendix:rooted_clique:singletons}
        For every $x \in V(K_{2k})$,
        either $U_x \subseteq S$, or $|U_x|=1$.
    \end{claim}

    \begin{proofclaim}
        Assume otherwise, and let $y \in V(K_{2k})$ be such that $U_y \not\subseteq S$ and $|U_y| > 1$.
        In particular, $G[U_y]$ has an edge $uv$ with $\{u,v\}\not\subseteq S$.
        Let $G'$ be obtained from $G$ by contracting $uv$ into a single vertex $w$,
        let 
        \[
            S' =
            \begin{cases}
                (S \setminus \{u,v\}) \cup \{w\} & \textrm{if $\{u,v\} \cap S \neq \emptyset$} \\
                S & \text{otherwise,}
            \end{cases}
        \]
        and let $\calM' = \{U_x' \mid x \in V(K_{2k})\}$ where,
        for every $x \in V(K_{2k})$, 
        \[
            U'_x = 
            \begin{cases}
                (U_x \setminus \{u,v\}) \cup \{w\} & \textrm{if $\{u,v\} \cap U_x \neq \emptyset$ (in other words, if $x=y$),} \\
                U_x & \textrm{otherwise.}
            \end{cases}
        \]
        Note that $\calM'$ is a model of $K_{2k}$ 
        in $G' \cup \binom{S'}{2}$ with 
        \[
            |\{x \in V(K_{2k}) \mid U_x' \cap S' \neq \emptyset\}| 
            = |\{x \in V(K_{2k}) \mid U_x \cap S \neq \emptyset\}| 
            \leq k.
        \]
        Since $|V(G')| < |V(G)|$, the statement of the lemma holds for $G'$, $S'$, and $\calM'$.

        The annotated graph $(G',S')$ is a minor of $(G,S)$, hence, if $G'$ contains $K_{k}$ as an $S'$-rooted minor, then $G$ also contains $K_{k}$ as an $S$-rooted minor.
        Thus, if~\ref{lem:item:rooted-model} holds for $(G',S',\calM')$, 
        then it also holds for $(G,S,\calM)$, 
        which contradicts the fact that it is a counterexample.

        If~\ref{lem:item:separation} holds for $(G',S',\calM')$, 
        then there is a separation $(A',B')$ of $G'$
        of order at most $k-1$ with $U'_z \subseteq V(B') \setminus V(A')$ for some $z \in V(K_{2k})$.
        Let $(A,B)$ be the separation of $G$ obtained from $(A',B')$
        by uncontracting the edge $uv$.
        Note that $(A,B)$ has order at most $k$.
        If $(A,B)$ has order at most $k-1$, 
        this contradicts the fact that $(G,S,\calM)$ is a counterexample.
        Thus, we suppose that $(A,B)$ is of order exactly $k$, and
        so $u,v \in V(A) \cap V(B)$.
        In particular, $V(A) \setminus S \neq \emptyset$.
        Since we also have $S \subseteq V(A)$ 
        and $U_z \subseteq V(B) \setminus V(A)$,
        this contradicts \Cref{claim:appendix:rooted_clique:t_separation}.
    \end{proofclaim}

    Let $T = \bigcup_{x \in V(K_{2k}), U_x \cap S = \emptyset} U_x$.
    By \Cref{claim:appendix:rooted_clique:singletons}, 
    $T$ induces a complete graph in $G$,
    which has size at least $k$ since 
    $|\{x \in V(K_{2k}) \mid U_x \cap S \neq \emptyset\}| \leq k$.
    By Menger's theorem,
    either there is a separation $(A,B)$ of $G$ of order at most $k-1$
    with $S \subseteq V(A)$ and $T \subseteq V(B)$ or 
    there are $k$ pairwise disjoint \stpaths{S}{T} $Q_1,\dots,Q_k$ in $G$.
    In the former case, there exists $u \in T \setminus (V(A) \cap V(B))$, 
    and so, 
    there exists $z \in V(K_{2k})$ with $U_z \subseteq V(B) \setminus V(A)$.
    This implies~\ref{lem:item:separation} for $G$, $S$, and $\calM$, 
    which is a contradiction.
    In the latter case, $(V(Q_i) \mid i \in [k])$ is an $S$-rooted model of $K_k$ in $G$,
    which is also a contradiction with $G$, $S$, and $\calM$ being a counterexample.
    This completes the proof of the lemma.
\end{proof}

\begin{corollary}\label{lemma:find_rooted_cliques}
    Let $k$ be a positive integer
    and let $(G,S)$ be an annotated graph with $|S|\leq k$ and $S \neq \emptyset$.
    If $K_{2k}$ is a minor of $G \cup \binom{S}{2}$,
    then, for some $\ell \in [k]$, 
    there is a separation $(A,B)$ in $G$ of order $\ell$ such that $V(B)\setminus V(A)\neq \emptyset$,
    $S \subseteq V(A)$, and $K_\ell$ is an $(V(A) \cap V(B))$-rooted minor of $B$.
\end{corollary}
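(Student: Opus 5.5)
We argue by induction on $|V(G)|$, deriving the corollary from \Cref{lemma:appendix:rooted_model_or_separation}. Let $\mathcal{M} = (U_x \mid x \in V(K_{2k}))$ be a model of $K_{2k}$ in $G \cup \binom{S}{2}$. Since the branch sets are disjoint and $|S| \leq k$, at most $k$ of them meet $S$, so the lemma applies to $(G,S,\mathcal{M})$.

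If outcome \ref{lem:item:rooted-model} holds, $K_k$ is an $S$-rooted minor of $G$. We then take $\ell = |S|$, $A = G[S]$ and $B = G$. This is a separation of order $|S| = \ell \in [k]$, and $V(B) \setminus V(A) \neq \emptyset$ because $|V(G)| \geq 2k > k \geq |S|$ (the graph $G \cup \binom{S}{2}$ has a $K_{2k}$ minor). An $S$-rooted $K_k$ model contains branch sets each meeting $S$. Since $|S| \leq k$ and disjoint branch sets meet distinct vertices of $S$, we get $|S| = k = \ell$, and $K_\ell$ is a $(V(A)\cap V(B))$-rooted minor of $B$. Nonemptiness of $S$ guarantees $\ell \geq 1$.

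If outcome \ref{lem:item:separation} holds, we get a separation $(A,B)$ of order $\ell' \leq k-1$ with $S \subseteq V(A)$ and some $U_z \subseteq V(B)\setminus V(A)$, so $V(B)\setminus V(A)\neq\emptyset$. Among such separations we choose one minimizing $|V(B)|$; we may also just recurse. Set $S_1 = V(A)\cap V(B)$ and $\mathcal{M}'=(U_x\cap V(B))$. As in \Cref{claim:appendix:rooted_clique:t_separation}, paths through $A$ between vertices of $S_1$ are simulated by edges of $\binom{S_1}{2}$, so $\mathcal{M}'$ is a model of $K_{2k}$ in $B\cup\binom{S_1}{2}$ with nonempty branch sets. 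Nonemptiness holds because every $U_x$ is adjacent to $U_z$ and every path from $U_z$ to $V(A)$ passes through $S_1$. Moreover $|S_1| \leq k-1 \leq k$.

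Before recursing we must check two things. First, $S_1 \neq \emptyset$: otherwise $U_z$ lies in a component of $G$ disjoint from $A$, while $U_z$ must be adjacent in $G\cup\binom{S}{2}$ to a branch set meeting $S$. We handle this by noting that if $S$ meets some branch set, adjacency forces $S_1 \neq \emptyset$; if $S$ meets no branch set, the model lives in $G$ and we apply the argument to a component. Second, $|V(B)|<|V(G)|$, which holds since $V(A)\setminus V(B)\neq\emptyset$ whenever $S\not\subseteq S_1$. In the degenerate case $S \subseteq S_1$, we have $S \subseteq V(B)$ and can replace $G$ by $B$ directly.

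Applying the induction hypothesis to $(B,S_1)$ yields a separation $(A',B')$ of $B$ of order $\ell\in[k]$ with $S_1\subseteq V(A')$, $V(B')\setminus V(A')\neq\emptyset$, and $K_\ell$ a $(V(A')\cap V(B'))$-rooted minor of $B'$. Then $(A\cup A',B')$ is a separation of $G$ with the same adhesion, $S\subseteq V(A\cup A')$, and $V(B')\setminus V(A\cup A') = V(B')\setminus V(A') \neq \emptyset$, since $V(A)\cap V(B')\subseteq S_1\subseteq V(A')$. This completes the induction. The main obstacle is the bookkeeping in these degenerate cases, ensuring that $S_1$ is nonempty and the recursion strictly shrinks; the core content is already supplied by the lemma.
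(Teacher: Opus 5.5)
Your proposal has a genuine gap: the recursion is not well-founded. You induct on $|V(G)|$, but outcome~\ref{lem:item:separation} of \Cref{lemma:appendix:rooted_model_or_separation} can return a separation with $V(A)\subseteq V(B)$, i.e.\ $V(B)=V(G)$. In that case the induction hypothesis does not apply to $(B,S_1)$, and \q{replacing $G$ by $B$ directly} makes no progress.

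This case really occurs. Take $k\ge 2$, $G=K_{2k}$ and $|S|=k-1$. Outcome~\ref{lem:item:rooted-model} is impossible since $|S|<k$. In a complete graph one side of every separation contains the other, so every separation from outcome~\ref{lem:item:separation} has $V(B)=V(G)$. For instance, $(A,B)=(G[S],G)$ is a legitimate output, and then $(B,S_1)=(G,S)$ is exactly the instance you started from. Choosing $|V(B)|$ minimal does not help, since every admissible separation has $V(B)=V(G)$.

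The missing idea is to induct on $k$ instead, which is what the paper does. Since $|S_1|\le k-1$, restrict $\mathcal{M}'$ to $2(k-1)$ of its branch sets to get a model of $K_{2(k-1)}$ in $B\cup\binom{S_1}{2}$. Then apply the $(k-1)$-case of the corollary to $(B,S_1)$, which yields $\ell\in[k-1]\subseteq[k]$. The base case $k=1$ forces $|S|=1$; take $A$ to be the one-vertex graph on $S$ and $B=G$. With this induction, $|V(B)|<|V(G)|$ is never needed, and your gluing step $(A\cup A',B')$ goes through verbatim.

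Separately, your treatment of $S_1=\emptyset$ (\q{apply the argument to a component}) is too vague. The relevant component misses $S$, so the hypothesis $S\neq\emptyset$ fails there. The case is nonetheless trivial. By your own adjacency argument, no branch set meets $S$, so the whole model lies in a component $Z$ of $G$ with $V(Z)\cap S=\emptyset$ and $|V(Z)|\ge 2k\ge 2$. For any $v\in V(Z)$, the separation $\big(G-(V(Z)\setminus\{v\}),\,Z\big)$ has order $1$ and satisfies the conclusion with $\ell=1$. The paper's proof also passes over this case silently.
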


\begin{proof}
    We proceed by induction on $k$.
    For $k=1$, one can take $A$ to be a $1$-vertex graph containing the vertex of $S$, and $B = G$. 
    Note that $V(B) \setminus  V(A)$ is non empty, 
    and the unique vertex of $A$ forms a $(V(A) \cap V(B))$-rooted
    model of $K_1$ in $B$.
    Now, assume that $k\geq 2$ and that the result holds for all positive integers less than $k$. 
    Let $\mathcal{M}=(U_x \mid x\in V(K_{2k}))$ be a model of $K_{2k}$ in $G \cup \binom{S}{2}$. 
    Since $|S|\le k$, at most $k$ sets in $\mathcal{M}$ intersect $S$.
    Apply then \Cref{lemma:appendix:rooted_model_or_separation} to $G,S,\mathcal{M}$. 
    If there is an $S$-rooted model of $K_k$ in $G$,
    then take $A$ to be the graph with vertex set $S$ with no edges
    and $B$ to be the whole graph $G$, and the lemma is satisfied with $\ell=k$ (note that $V(B) \setminus V(A) \neq \emptyset$ since \(|V(G)|\ge 2k>|S|\)).
    Otherwise, there exists a separation $(C,D)$ in $G$ of order at most $k-1$ 
    and $z \in V(K_{2k})$
    such that $S \subseteq V(C)$ and $U_z \subseteq V(D) \setminus V(C)$.
    Let $Z$ be the connected component of $D$ containing $U_z$. 
    Since $z$ is adjacent to every other vertex in $K_{2k}$, 
    $U_x$ contains a vertex of $Z$ for every $x\in V(K_{2k})$. 
    Let $\mathcal{M}_Z$ be obtained from $\mathcal{M}$ by replacing each branch set in $\mathcal{M}$ by its intersection with $V(Z)$. 
    Let $S' = V(C) \cap V(Z)$ and keep in mind that $|S'|\le k-1$. 
    Observe that $\mathcal{M}_Z$ is a model of $K_{2k}$ in $Z \cup \binom{S'}{2}$.
    By the induction hypothesis applied to $Z,S'$, and a restriction of $\mathcal{M}_Z$ to a model of $K_{2(k-1)}$,
    there is a separation $(A',B')$ in $Z$ of order $\ell:=|V(A')\cap V(B')|\le k-1$ such that $V(B')\setminus V(A')\neq \emptyset$,
    $S'\subseteq V(A')$, and $K_\ell$ is a $(V(A') \cap V(B'))$-rooted minor of $B'$.     
    Finally, put $A = C \cup A' \cup (D-V(Z))$ and $B = B'$.
    It follows that the separation $(A, B)$ has the desired properties. 
\end{proof}

\begin{proof}[Proof of \Cref{thm:had}]
    Suppose that there is no $S$-rooted model of $K_k$ in $G$. We will show that there is a set $S'$ with $S\subseteq S'\subseteq V(G)$ such that $\torso_G(S')$ does not contain $K_{2k}$ as a minor.
    We proceed by induction on $|V(G)|$.
    If $G$ is not connected, then we apply the induction hypothesis on each connected component and we are done.
    Now suppose that $G$ is connected.
    If $G$ does not contain $K_{2k}$ as a minor, then we are done because $\torso_G(V(G))=G$.
    Otherwise, let $\mathcal{M} = (U_x \mid x \in V(K_{2k}))$ be a model of $K_{2k}$ in $G$.
    We assume that $S$ intersects at most $k$ branch sets of $\calM$,
    since otherwise $K_k$ is an $S$-rooted minor of $G$.
    So, by \Cref{lemma:appendix:rooted_model_or_separation},
    there is a separation $(A,B)$ of $G$ with $S \subseteq V(A)$ and $U_z \subseteq V(B) \setminus V(A)$
    for some $z \in V(K_{2k})$.
    Then,
    since $z$ is adjacent to every other vertex in $K_{2k}$,
    the family $(U_x \cap V(B) \mid x \in V(K_{2k}))$ is a model of $K_{2k}$ in $B \cup \binom{V(A) \cap V(B)}{2}$.
    By \Cref{lemma:find_rooted_cliques},
    there is a separation $(C,D)$ of $B$ with $V(C) \cap V(D) \neq \emptyset$,
    $V(A) \cap V(B) \subseteq V(C)$, and such that
    there is a $(V(C) \cap V(D))$-rooted model of $K_{|V(C) \cap V(D)|}$ in $D$.
    In particular, this implies that $G' = \torso_G(V(C) \cup V(A))$ has no $S$-rooted model of $K_k$.
    So, by the induction hypothesis, there exists $S' \subseteq V(C)\cup V(A)\subseteq V(G)$ with $S \subseteq S'$
    such that $\torso_{G'}(S')$ is $K_{2k}$-minor-free.
    Since $\torso_{G'}(S') = \torso_G(S')$, this proves the theorem.
\end{proof}

\section{Erdős–Pósa property of lollipops}\label{sec:EP}

Let $G$ be a graph and $S \subset V(G)$.
An \defin{$S$-lollipop} in $G$ is a subgraph of $G$ that is the union of a cycle $C$ and a path $P$ such that $V(P)$ intersects both $V(C)$ and $S$.
Note that an annotated graph $(G,S)$ contains a lollipop if and only if
$(K_3,T)$ is a minor of $(G,S)$, where $T$ is a singleton in $V(K_3)$.
Therefore, the next theorem implies~\Cref{thm:EP-triangles-in-one-vx}.

\begin{theorem}\label{thm:EP-lollipops}
    There exists an absolute constant $c$ such that for every graph $G$, 
    for every $S\subseteq V(G)$, and
    for every positive integer $k$, 
    either
    \begin{enumerate}
        \item $G$ has $k$ disjoint $S$-lollipops or\label{item:EPS_thm_i}
        \item there exists a set $Z \subseteq V(G)$ such that $|Z| \leq c \cdot k\log k$ and $G-Z$ has no $S$-lollipops.\label{item:EPS_thm_ii}
    \end{enumerate}
\end{theorem}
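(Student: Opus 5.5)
We adapt the classical proof of the Erdős–Pósa theorem for cycles with the $\Oh(k\log k)$ bound (via a minimal counterexample and the Simonovits-type argument: a cubic multigraph with many vertices has a short cycle). Let $(G,S,k)$ be a counterexample with $|V(G)|+|E(G)|$ minimum. We first reduce the structure. If some vertex lies on no cycle and is not needed to connect $S$ to cycles, it can be removed. Vertices of degree $1$ not in $S$ can be deleted. A degree-$2$ vertex not in $S$ can be suppressed: contract one of its edges, keeping multigraph edges if needed, so we work with multigraphs throughout. Moreover, every component of $G$ meets $S$ and contains a cycle. Let $F$ be a maximal set of disjoint $S$-lollipops; we may assume $|F|<k$. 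The first step is to show that an optimal hitting set can be sought inside a bounded region. If $G$ has no $S$-lollipop after deleting a small set, we are done. So the goal is to exhibit $k$ disjoint $S$-lollipops when the hitting number exceeds $ck\log k$.

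The key step is to handle \emph{short} lollipops. Concretely, we aim to show the following. If every $S$-lollipop-hitting set has size more than $ck\log k$, then there is an $S$-lollipop with at most $\Oh(\log k)$ vertices, or the structure yields $k$ disjoint lollipops directly. Given a short lollipop $L$, we apply minimality to $(G-V(L),S\setminus V(L),k-1)$. Either we get $k-1$ disjoint lollipops, which together with $L$ give $k$, or a hitting set $Z'$ of size $c(k-1)\log(k-1)$. Then $Z'\cup V(L)$ hits everything in $G$, with size at most $c(k-1)\log(k-1)+\Oh(\log k)\le ck\log k$ for $c$ large. This is the standard induction.

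To find a short lollipop, we consider $G'$: the reduced graph where all non-$S$ vertices have degree at least $3$. There are two cases.
\begin{enumerate}
\item If $G$ has fewer than $ck\log k$ vertices of degree at least $3$ plus vertices of $S$ lying on cycles, we take these as a hitting set. We must argue that after removing them, the remaining components are paths (subdivided edges). A path with no branch vertex and no $S$ vertex carries no lollipop.
\item Otherwise the set $B$ of branch vertices together with the $S$-vertices on $2$-connected parts is large, of size at least $ck\log k$. We then distinguish two subcases. If $|S\cap B|$ is small relative to $|B|$, then the cubic part has many vertices. A multigraph with minimum degree $3$ on $n$ vertices has a cycle of length $\Oh(\log n)$, and we need it \emph{close} to $S$. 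So we use a BFS from $S$: the ball of radius $r$ around $S$ grows by factor roughly $2$ per layer as long as it stays a forest. Hence within radius $\Oh(\log n)$ of $S$ either a cycle closes or the growth exhausts $n$. This gives a cycle plus a path to $S$ of total length $\Oh(\log n)$. If instead $S$-vertices of degree $2$ on cycles are abundant, they are themselves cheap roots, and the same BFS argument still applies.
\end{enumerate}
Finally we bound $n$ polynomially in $k$. Once $n\le k^{\Oh(1)}$, we get $\log n=\Oh(\log k)$.

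The main obstacle is this last point: bounding the size of the reduced graph polynomially in $k$, or otherwise getting a short lollipop with length $\Oh(\log k)$ rather than $\Oh(\log n)$. For this I would use a "frame": take a maximum packing $F$ of fewer than $k$ lollipops, of total size possibly large, and work relative to it. Alternatively, and more likely to work, I would first apply \cref{thm:EP-triangles-in-one-vx}'s weaker version, \Cref{EP-sota} together with \cref{lem:tw:erdos:posa}, to get a hitting set $Y$ of size $\mathrm{poly}(k)$. Then in $G-Y$ there are no lollipops. Every lollipop uses $Y$, and contracting $G-Y$ appropriately gives a reduced graph of size $\mathrm{poly}(k)$ around $Y$. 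In this graph the BFS-growth argument from vertices of $Y$ (counting disjoint trees rooted at $Y$ and $S$) yields an $S$-lollipop of length $\Oh(\log k)$ once the hitting number is at least $ck\log k$. Making this contraction preserve both the lollipop structure and the annotation is where I expect the real work to lie.
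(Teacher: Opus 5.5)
\textbf{There is a genuine gap.} Reducing to ``there is an $S$-lollipop with $\Oh(\log k)$ vertices in the reduced multigraph'' is sound, and so is the bookkeeping $c(k-1)\log(k-1)+\Oh(\log k)\le ck\log k$. But that claim is the entire difficulty, and your proposal does not prove it.

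Your BFS argument gives at best $\Oh(\log n)$, where $n$ is the number of vertices of the reduced graph. Even that is shaky: degree-$2$ vertices of $S$ do not branch, and a cycle formed across several BFS trees need not be short. More importantly, nothing in the proposal bounds $n$ in terms of $k$. For cycles, the bound on $n$ comes from the Simonovits-type fact that a large graph of minimum degree $3$ has $k$ disjoint cycles. This has no analogue for lollipops. Take $S=\{s\}$ and join $s$ to one vertex of an $n$-vertex cubic graph of girth $\Theta(\log n)$. All your reduction rules are exhausted and every vertex is relevant. There are no two disjoint $S$-lollipops, and the shortest one has $\Theta(\log n)$ vertices. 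Its hitting number is $1$, so it is not a counterexample; the point is that your reductions alone cannot bound $n$.

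So the hypothesis ``hitting number $>ck\log k$'' has to be used to \emph{localize} the problem, and you do not say how. Your suggestion of compressing $G-Y$ around a $\mathrm{poly}(k)$-size hitting set $Y$ is left open, and it is not routine. Components of $G-Y$ avoiding $S$ can be arbitrarily large with unbounded cycle packing number. An optimal hitting set may need vertices inside them to cut their cycles off from $Y$. A valid compression would therefore have to preserve how subsets of $Y$ connect to the cycles behind them, which is a kernel-type statement at least as hard as the theorem.

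\textbf{The missing idea} is that the paper never seeks short lollipops or bounds $|V(G)|$; it localizes through a \emph{maximum} cycle packing. The argument runs as follows.
\begin{enumerate}
\item By the classical Erd\H{o}s--P\'osa theorem, either a feedback vertex set of size $\ep(k)$ works, or a maximum family $\calC$ of disjoint cycles has $|\calC|\ge k$.
\item Take a maximal family $\calP$ of disjoint $V(\calC)$--$S$ paths ending on distinct cycles, and let $\calC_1$ be the cycles they reach. If $|\calC_1|\ge k$, this gives $k$ disjoint lollipops.
\item Otherwise apply Menger between $V(\calC\setminus\calC_1)$ and $S$ with threshold $\alpha(k)=\lceil\simono(2k-1)/2\rceil+k-1$.
\item If there is a separator $Z'$ of size $<\alpha(k)$, the $S$-side of $G-Z'$ has fewer than $k$ disjoint cycles, since otherwise $\calC$ would not be maximum. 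Adding an $\ep(k)$-size feedback vertex set of that side then hits all lollipops.
\item Otherwise there is a family $\calQ$ of $\alpha(k)$ disjoint paths. By maximality of $\calP$, each must meet $V(\calC_1\cup\calP)$. Cut each at its first such vertex and continue along $\calP$ to $\calC_1$. Together with the cycles of $\calC_1\cup\calC_2$ (where $|\calC_2|<k$), this forms a graph with all degrees $2$ or $3$ and at least $\simono(2k-1)$ vertices of degree $3$.
\item Simonovits' lemma then yields $2k-1$ disjoint cycles. These replace the at most $2k-2$ cycles of $\calC_1\cup\calC_2$, contradicting the maximality of $\calC$.
\end{enumerate}
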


We will use the following classical lemma of Simonovits~\cite{Simonovits1967}.
First, for every positive integer $k$, let
\[\defin{\simono(k)} = \begin{cases}
        \lceil 4k(\log k + \log \log k + 4)\rceil &\text{ if $k \geq 2$,}
        \\
        2 &\text{ if $k = 1$.}
        \end{cases}\]

Note that $\simono(k) \in \mathcal{O}(k\log k)$.

\begin{lemma}[\cite{Simonovits1967}]\label{lem:simonovits}
    Let $k$ be a positive integer and let $H$ be a graph with all vertices of degree $2$ or $3$. 
    If $H$ contains at least $\simono(k)$ vertices of degree $3$, then $H$ contains $k$ pairwise disjoint cycles.
\end{lemma}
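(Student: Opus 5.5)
We would prove \cref{lem:simonovits} by induction on $k$, repeatedly removing a shortest cycle and checking that only $\Oh(\log |V(H)|)$ vertices of degree $3$ are destroyed each time. The base case $k=1$ needs no counting: $H$ is nonnull with minimum degree at least $2$, so it contains a cycle.

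For the induction step, suppress all degree-$2$ vertices of $H$. Components that are plain cycles contribute no degree-$3$ vertices, so the result is a cubic multigraph $H'$ (loops and parallel edges allowed) on $n \geq \simono(k)$ vertices, where $n$ is the number of degree-$3$ vertices of $H$. Disjoint cycles in $H'$ lift to disjoint cycles in $H$, so it suffices to work in $H'$. The first step is a girth bound, proved by the usual breadth-first-search argument. If $H'$ had no cycle of length at most $2r+1$, then the ball of radius $r$ around any vertex would induce a tree in which every non-leaf vertex has degree $3$. That tree has at least $3\cdot 2^{r}-2$ vertices. Hence $H'$ has a cycle $C$ of length $g \leq 2\log_2 n + \Oh(1)$.

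Next, delete $V(C)$ from $H'$. Then repeatedly delete vertices of degree at most $1$, and suppress vertices of degree $2$, until the remaining graph $H''$ again has all degrees in $\{2,3\}$, or is null. We bound how many degree-$3$ vertices this loses.
\begin{enumerate}
    \item The vertices of $C$ account for $g$ losses.
    \item Since $C$ is a \emph{shortest} cycle, no vertex outside $C$ has two neighbours on $C$ once $g \geq 5$. Otherwise it would close a cycle of length at most $\lfloor g/2\rfloor + 2 < g$.
    \item Consequently the $g$ edges leaving $C$ go to $g$ distinct vertices, each of which drops to degree $2$. No vertex drops to degree $\leq 1$, so no cascade of deletions occurs.
    \item The loss is therefore at most $2g$.
\end{enumerate}
The degenerate cases $g \leq 4$ (a loop, a parallel pair, a triangle, or a $4$-cycle) must be checked by hand. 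Each loses only a bounded number of degree-$3$ vertices; a crude bound is $3g$, where deleted degree-$1$ vertices may propagate only a bounded amount. In every case $H''$ has at least $n - 4\log_2 n - c_0$ vertices of degree $3$, for an absolute constant $c_0$.

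By induction, $H''$ contains $k-1$ disjoint cycles as soon as $n - 4\log_2 n - c_0 \geq \simono(k-1)$. These cycles lift to $H$ disjointly from $C$, giving $k$ disjoint cycles. Since $x - 4\log x$ is increasing, it suffices to verify $\simono(k) - 4\log \simono(k) - c_0 \geq \simono(k-1)$. This reduces to $4(\log k + \log\log k + 4) + \Oh(1)$-type increments dominating $4\log\big(4k(\log k+\log\log k+4)\big) + c_0$, which holds with the $+4$ slack for all $k\geq 2$ (with $\log$ taken base $2$). The small values of $k$ must also be checked directly, including the transition from $\simono(1)=2$.

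We expect the main obstacle to be this bookkeeping rather than any conceptual step. It has two parts: the case analysis for very short cycles, where multigraph artefacts can cause pruning cascades, and the constant-chasing needed to match Simonovits' exact function $\simono$ rather than merely some $\Oh(k\log k)$ bound. If the exact constants resist, we would first prove the lemma with a weaker explicit $\Oh(k\log k)$ function. That weaker form is all that the application to \cref{thm:EP-lollipops} requires.
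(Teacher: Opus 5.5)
The paper gives no proof of this lemma; it quotes it from Simonovits. Your argument is the classical proof of it, essentially the one in Diestel's \emph{Graph Theory}, Section 2.3, and it is correct. The steps are: take a shortest cycle $C$ of length $g\le 2\log_2 n+\Oh(1)$ in the cubic multigraph, using the breadth-first-search count. Deleting it and pruning destroys at most $2g$ cubic vertices. Then induct on $k$.

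Your bookkeeping also works for the exact function $\simono$, so no weaker fallback is needed. Your own girth bound gives a loss below $4\log_2 n$ once $n\ge 40$. Moreover $\simono(k)-\simono(k-1)>4\log_2\simono(k)$ for every $k\ge 3$ (for instance $35>4\log_2 75$ when $k=3$). The case $k=2$ is immediate from $\simono(2)=40$.

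Two small streamlinings:
\begin{enumerate}
\item The hand analysis of $g\le 4$ is unnecessary. The number of edges between the deleted set and the remaining vertices is at most $g$ at the start. This holds because every vertex of $C$ has only one edge-end off $C$, even when $C$ is a loop or a digon. Each time a vertex of degree at most $1$ is pruned, this number drops by at least one. Hence the pruned vertices plus the final degree-$2$ vertices number at most $g$, and the loss is at most $2g$ for every $g$, without using that $C$ is shortest.
\item Run the induction for cubic multigraphs, or subdivide edges, so that the hypothesis applies to $H''$.
\end{enumerate}
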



We will use as a black-box the Erd\H{o}s-P\'osa Theorem.

\begin{theorem}[Erd\H{o}s-P\'osa Theorem~\cite{EP65}]\label{thm:EP:classic}
    There is a function $\ep \colon \mathbb{N} \to \mathbb{N}$ such that the following holds.
    For every graph $G$ and every positive integer $k$, 
    either
    \begin{enumerate}[label=(\arabic*)]
        \item $G$ has $k$ pairwise disjoint cycles, or
        \item there exists $Z \subseteq V(G)$ such that $|Z| \leq \ep(k)$ and $G-Z$ has no cycle.
    \end{enumerate}
    Additionally, $\ep(k) \in \mathcal{O}(k \log k)$.
\end{theorem}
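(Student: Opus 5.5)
We plan to follow Simonovits' approach and make \Cref{lem:simonovits} do the counting. The total bound will be $\ep(k) \le \simono(k) + k - 1 = \mathcal{O}(k\log k)$, coming from at most $\simono(k)-1$ branch vertices plus at most $k-1$ cycle components.

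Fix $k$ and a graph $G$ with no $k$ pairwise disjoint cycles; if $G$ is acyclic there is nothing to do. We may assume $G$ is connected, except for the bookkeeping of cycle components below. Among all subgraphs $F \subseteq G$ in which every vertex has degree $2$ or $3$, we choose one that maximizes the number $b(F)$ of degree-$3$ vertices. Subject to that, we choose $F$ to maximize the number of vertices it contains, or equivalently to make it "tight" with respect to short connections outside $F$. The first step is immediate. By \Cref{lem:simonovits}, since $G$ has no $k$ disjoint cycles, $b(F) < \simono(k)$. Moreover, $F$ has fewer than $k$ components, since every component of $F$ contains a cycle and the components are disjoint. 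In particular, fewer than $k$ of its components are plain cycles.

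Let $X$ consist of all degree-$3$ vertices of $F$ together with one vertex from each cycle component of $F$. Then $|X| \le \simono(k) - 1 + k - 1$. Every component of $F - X$ is then a path, namely a subdivided edge of the "frame", with its ends attached to $X$.

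The main step is to show that $G - X$ is a forest. Suppose $C$ is a cycle in $G - X$. We derive a contradiction with the maximality of $F$ through a case analysis on how $C$ meets $F$.
\begin{enumerate}
\item If $C$ is disjoint from $F$, then $F \cup C$ is a valid subgraph with the same value of $b$ and more vertices (or, using connectivity, one can add a path from $C$ to $F$ and create new degree-$3$ vertices). This contradicts the secondary maximality.
\item If $C$ meets $F$ in at least two points, lying on interior vertices of the frame paths, we take a subpath $Q$ of $C$ whose ends are in $F$ and whose interior avoids $F$. If the two ends of $Q$ are distinct degree-$2$ vertices of $F$, then $F \cup Q$ has two more degree-$3$ vertices, contradicting maximality of $b$.
\item If $C$ meets $F$ in a single vertex $v$ of degree $2$ in $F$, we reroute. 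We use a path from $C$ to $F$ and exchange part of a frame path for part of $C$. This should either increase $b$ or keep $b$ while enlarging $F$.
\end{enumerate}

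We expect the hardest part to be making the secondary tie-breaking criterion precise enough that every configuration of $C$ relative to $F$ gives a strict improvement. Configuration (c), and the case where $Q$ has both ends on the same frame path, look the most delicate. There, adding $Q$ creates two degree-$3$ vertices but may seem to waste structure. It is nevertheless still a valid improvement of $b$, so we plan to check that the only genuinely problematic configurations are those where $Q$ would land on a vertex already of degree $3$. That is excluded because $X$ contains all such vertices and $C$ avoids $X$.

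If the tie-breaking becomes messy, the fallback is the standard reduction: pass to a minimal counterexample, delete vertices of degree at most $1$, and suppress vertices of degree $2$ in the resulting multigraph, which yields minimum degree $3$. In a multigraph of minimum degree $3$, a Moore-bound argument gives a cycle of length $\mathcal{O}(\log k)$ once there are at least $\simono(k)$ vertices. Deleting this short cycle and inducting on $k$ then also yields an $\mathcal{O}(k\log k)$ bound.
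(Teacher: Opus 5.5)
The paper does not prove this statement. It quotes the Erd\H{o}s--P\'osa theorem from \cite{EP65} and uses it as a black box, so your argument has to stand on its own. As written it has a genuine gap in case (c), the case you flagged as delicate. With $X$ consisting of the degree-$3$ vertices of $F$ plus one vertex per cycle component, $G-X$ need not be a forest, and rerouting does not help under your tie-breaker.

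Here is an instance with $k=3$. Take a theta graph with branch vertices $x,y$ and three long, internally disjoint $x$--$y$ paths $P_1,P_2,P_3$. Choose an interior vertex $v$ of $P_1$ at distance at least $4$ from both $x$ and $y$, and attach a triangle $vt_1t_2$ with new vertices $t_1,t_2$.
\begin{enumerate}
\item This graph has no $3$ pairwise disjoint cycles.
\item Only $x,y,v$ have degree at least $3$. By parity, every subgraph $F$ with all degrees in $\{2,3\}$ therefore has $b(F)\in\{0,2\}$.
\item The subgraphs with $b(F)=2$ are the theta itself and two rerouted graphs. Each rerouted graph adds the triangle and deletes the interior of $P_1$ between $v$ and $y$ (respectively between $x$ and $v$).
\item Each rerouted graph loses at least three vertices and gains only $t_1,t_2$. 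So your lexicographic choice selects the theta, giving $X=\{x,y\}$, and the triangle is a cycle of $G-X$.
\end{enumerate}
So the rerouting in (c) keeps $b$ but can strictly decrease $|V(F)|$, and no contradiction arises.

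What is missing is a separate budget for these pendant cycles, as in the textbook (Diestel) rendering of Simonovits' argument, rather than a cleverer tie-breaker.
\begin{enumerate}
\item Take $F$ inclusion-maximal among subgraphs with all degrees in $\{2,3\}$, and let $U$ be its set of degree-$3$ vertices.
\item Let $Z$ be the set of degree-$2$ vertices $z$ of $F$ for which some cycle $C_z$ of $G$ meets $F$ exactly in $z$.
\item For distinct $z,z'\in Z$, the cycles $C_z$ and $C_{z'}$ are disjoint. Otherwise $C_z\cup C_{z'}$ contains a $z$--$z'$ path internally disjoint from $F$, and adding it to $F$ contradicts maximality.
\item Each $C_z$ is also disjoint from every cycle component of $F$ that avoids $Z$.
\item Let $X'$ consist of $Z$ together with one vertex from each cycle component of $F$ avoiding $Z$. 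The disjoint family above shows $|X'|\le k-1$.
\end{enumerate}
Now every cycle avoiding $U\cup X'$ is excluded:
\begin{enumerate}
\item If it lies in $F$, it is a cycle component of $F$ and meets $X'$.
\item If it meets $F$ in zero or at least two vertices, your cases (a) and (b) apply, using inclusion-maximality.
\item If it meets $F$ in exactly one vertex, that vertex lies in $Z\subseteq X'$.
\end{enumerate}
Combined with $|U|<\simono(k)$ from \Cref{lem:simonovits}, this gives the bound $\simono(k)+k-2$ you were aiming for.

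Your fallback does not close the gap either. The Moore bound gives a cycle of length $O(\log n)$ in the reduced multigraph on $n$ vertices, and $n$ is not bounded in terms of $k$: for instance $K_{3,n}$ has minimum degree $3$ and no two disjoint cycles. Getting cycles of length $O(\log k)$ therefore needs a further argument, and that is exactly what the degree-$\{2,3\}$ frame and \Cref{lem:simonovits} provide.
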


In what follows, we fix such a function $\ep(\cdot)$.


\begin{proof}[Proof of~\Cref{thm:EP-lollipops}]
    For every positive integer $k$, let
        \[\alpha(k) = \left\lceil \frac{\simono(2k-1)}{2} \right\rceil + (k-1)\]
    and let $c$ be a constant such that for every integer $k$ with $k \geq 2$,
        \[\ep(k) + \alpha(k) \leq c \cdot k\log k. \]
    Let $G$ be a graph, let $S\subseteq V(G)$, and let $k$ be a positive integer.
    If $k=1$, then either $G$ contains an $S$-lollipop or not and the result holds. 
    Now suppose $k \geq 2$.
    First, we apply the Erdős–Pósa theorem (\Cref{thm:EP:classic}) to $G$ and $k$.
    This gives that either there are $k$ pairwise disjoint cycles in $G$, or there exists $Z \subset V(G)$ with $|Z| \leq \ep(k)\leq c \cdot k \log k$ such that $G - Z$ is a forest.
    In the latter case, $G - Z$ has no cycles, and so $G - Z$ has no $S$-lollipops.
    Thus, we assume the former.
    Let $\calC$ be the maximum family of pairwise disjoint cycles in $G$. 
    In particular, $|\calC| \geq k$.

    Let $\calP$ be a maximal family of pairwise vertex-disjoint \stpaths{V(\calC)}{S} in $G$ such that for each two paths in $\calP$, the endpoints in $V(\calC)$ lie in distinct cycles of $\calC$.
    Let $\calC_1$ be the family of all cycles in $\calC$ containing endpoints of paths in $\calP$.
    If $|\calC_1| \geq k$, then clearly $G[V(\calC_1 \cup \calP)]$ contains $k$ pairwise disjoint $S$-lollipops.
    Thus, we assume that $|\calC_1| < k$.
    In particular, $\calC \setminus \calC_1 \neq \emptyset$.

    We apply Menger's theorem between $V(\calC \setminus \calC_1)$ and $S$.
    We get that either there are $\alpha(k)$ pairwise disjoint \stpaths{V(\calC \setminus \calC_1)}{S} in $G$, or there exists a set $Z' \subset V(G)$ with $|Z'| < \alpha(k)$ such that $G - Z'$ has no \stpaths{V(\calC \setminus \calC_1)}{S}.
    First, assume that the latter holds and let $H$ be the subgraph of $G$ defined as the union of all components of $G - Z'$ containing a vertex in $S$.
    It follows that $H$ has no vertices of $V(\calC \setminus \calC_1)$.
    In particular, $H$ has no $k$ pairwise disjoint cycles, as otherwise these cycles, along with the cycles in $\calC \setminus \calC_1$ form a family of pairwise disjoint cycles in $G$ larger than $\calC$,
    which contradicts $\calC$ being maximum.
    Therefore, by the Erd\H{o}s–P\'osa theorem, there exists $Z'' \subset V(H)$ with $|Z''| \leq \ep(k)$ such that $H - Z''$ is a forest.
    In particular, for $Z = Z' \cup Z''$, we have $|Z| \leq \ep(k) + \alpha(k) \leq ck\log k$ and $G - Z$ has no $S$-lollipops.
    
    Therefore, we assume that there is a family $\calQ$ of at least $\alpha(k)$ pairwise disjoint \stpaths{V(\calC \setminus \calC_1)}{S} in $G$. We will show that this contradicts the choice of $\calC$.
    Let $\calC_2$ be the family of all cycles in $\calC\setminus\calC_1$ containing endpoints of paths in $\calQ$.
    Similarly as in the case of $\calC_1$, we may assume that $|\calC_2| < k$.

    Let $Q \in \calQ$.
    If $V(Q) \cap V(\bigcup \calP \cup \bigcup \calC_1) = \emptyset$, then $\calP \cup \{Q\}$ is a larger family of \stpaths{V(\calC)}{S} that are pairwise disjoint contradicting the definition of $\calP$.
    Thus, $V(Q) \cap V(\calP \cup \calC_1) \neq \emptyset$, and the next definition is justified.
    Let $q_Q$ be the endpoint of $Q$ in $V(\calC_2)$, and let $q_Q'$ be the first vertex in $V(\calC_1 \cup \calP)$ along $Q$ starting from $q_Q$.
    Note that $q_Q$ is distinct from $q'_Q$ since $V(\calC_2) \cap V(\calC_1 \cup \calP) = \emptyset$.
    Let $R_Q$ be the $q_Q$--$q_Q'$ subpath of $Q$.
    Next, we define a path $R_Q'$.
    If $q_Q' \in V(\calC_1)$, then we set $R_Q'$ to be the one-vertex path containing $q_Q'$.
    Otherwise, for $P \in \mathcal{P}$ such that $q'_Q \in V(P)$, 
    we set $R_Q'$ to be the $V(\calC_1)$--$q_Q'$ subpath of $P$.
    Let $\calQ'$ be the set of all $Q \in \calQ$ such that $q_Q' \notin V(\calC_1) \cap V(\calP)$.
    
    Define
    \[
        H = \bigcup(\calC_1 \cup \calC_2) \cup \bigcup_{Q \in \calQ'} (R_Q \cup R_Q').
    \]
    First, we claim that $H$ has only vertices of degree $2$ or $3$.
    All vertices of $\bigcup (\calC_1 \cup \calC_2)$ are of degree at least $2$ in $H$.
    By construction, for every $Q \in \calQ'$, the endpoints of $R_Q$ and $R_Q'$ in $G$ are in $H$, hence, all vertices of $H$ have degree at least $2$.
    On the other hand, recall that 
    \begin{enumerate}
        \item the cycles in $\calC_1 \cup \calC_2$ are pairwise disjoint;
        \item the paths in $\calP$ are pairwise disjoint;
        \item the paths in $\calQ$ are pairwise disjoint;
        \item the paths in $\calP$ are disjoint from the cycles in $\calC_2$; and
        \item for $Q\in \calQ'$, the vertex $q_Q'$ intersects a path of $\calP$ or a cycle of $\calC_1$, but not both by the definition of $\calQ'$.  
    \end{enumerate}
    Therefore we obtain that all vertices of $H$ have degree at most $3$.

    Next, we claim that $H$ has at least $\simono(2k-1)$ vertices of degree $3$.
    Note that since $|\calP| < k$ and the paths in $\calQ$ are pairwise disjoint, we have $|\calQ \setminus \calQ'| < k$.
    In particular, $|\calQ'| \geq \lceil \simono(2k-1) \slash 2\rceil$, or in other words, $2|\calQ'| \geq \simono(2k-1)$.

    Let $\calQ'_\calC = \{Q \in \calQ' \mid q_Q' \in V(\calC_1)\}$ and for each $P \in \calP$, let $\calQ_P' = \{Q \in \calQ' \mid q_Q' \in V(P)\}$.
    Clearly, $\{\calQ'_\calC\}\cup \{\calQ_P' \mid P \in \calP\}$ is a partition of $\calQ'$.
    Let $\calP' = \{P \in \calP \mid \calQ_P' \neq \emptyset\}$.
    For each $P \in \calP'$, let $p_P$ be the endpoint of $P$ in $V(\calC_1)$ and let $Q_P \in \calQ_P'$ be such that $q_{Q_P}'$ is the furthest vertex in $P$ starting from $p_P$ among $\{q_Q' \mid Q \in \calQ_P'\}$.
    It follows that the vertices in the following set have degree $3$ in $H$:
    \[
        \{q_Q \mid Q \in \calQ'\} 
        \cup \{q_Q' \mid Q \in \calQ_\calC'\} 
        \cup \bigcup_{P \in \calP'}(\{p_P\} 
        \cup \{q_Q' \mid Q \in \calQ_P' \setminus\{Q_P\}\}).
    \]
    Additionally, the above set has size
    \[
        |\calQ'| + |\calQ_\calC'| + \sum_{P \in \calP'} \big(1 + (|\calQ_P'|-1)\big) 
        = 2|\calQ'| 
        \geq \simono(2k-1),
    \]
    since $\{\calQ'_\calC\}\cup \{\calQ_P' \mid P \in \calP\}$ is a partition of $\calQ'$.
    It follows that indeed, $H$ has at least $\simono(2k-1)$ vertices of degree $3$.
    By~\Cref{lem:simonovits}, $H$ contains a family $\calC'$ of $2k-1$ pairwise disjoint cycles.
    Since the paths in $\calP \cup \calQ'$ are disjoint from 
    the cycles in $\calC \setminus (\calC_1\cup \calC_2)$ and
    $H \subset \bigcup (\calC_1\cup \calC_2 \cup \calP \cup \calQ)$, 
    the family $\calD = \big(\calC \setminus (\calC_1 \cup \calC_2)\big) \cup \calC'$ 
    is a family of pairwise disjoint cycles in $G$.
    Moreover, 
    \[|\calD| = |\calC| - |\calC_1| - |\calC_2| + |\calC'| \geq |\calC| - 2(k-1) + 2k-1 = |\calC| + 1.\]
    This is a contradiction with the maximality of $\calC$, which ends the proof.
\end{proof}

\section{Open problems}\label{sec:open}

To conclude, we would like to gather some indicative conjectures and open problems that we consider relevant to the study of annotated graphs.

We conjecture that for plane annotated graphs where the annotated set appears in the outer-face, the Erd\H{o}s-P\'osa function is $\mathcal{O}(k \log k)$.


\begin{conjecture}[Erd\H{o}s-P\'osa function of annotated minors]\label{conj:rooted_EP_k_logk}
    Let $(H,T)$ be an annotated graph with $H$ a plane graph
    and $T$ contained in its outer-face.
    There exists a constant $c(H,T)$ such that,
    for every annotated graph $(G,S)$,
    for every positive integer $k$, either
    \begin{enumerate}
        \item $(G,S)$ has $k$ disjoint models of $(H,T)$, or
        \item there exists $Z \subseteq V(G)$ with $|Z| \leq c(H,T) \cdot k \log k$
            such that $(G-Z,S \setminus Z)$ has no model of $(H,T)$.
    \end{enumerate}
\end{conjecture}
Similar to the standard (non-annotated) case, the bound in $\bigO(k \log k)$ would be tight for every $(H,T)$ 
where $H$ is not a forest (see e.g.~\cite{EP65,tightEP}).
As mentioned earlier, a bound in $\bigO(k)$ in the case where $H$ is a tree was recently obtained
by Claus, Joret, Rambaud, and Robinson~\cite{EProotedtrees}.
In \Cref{thm:EP-lollipops}, 
we solve the case where $H=K_3$ and $T$ is a single vertex of $K_3$.
Therefore, the first open case of \Cref{conj:rooted_EP_k_logk}
is $H=K_3$ with $T$ consisting of two vertices of $H$.


Another intriguing line of research consists in finding the
minimal annotated obstructions of some classes of annotated graphs.
Probably the most natural example is the case of planar graphs:
\begin{problem}[Annotated Kuratowski] \label{annotated-Kuratowski}
    Determine the smallest set $\mathcal{O}$
    of annotated graphs such that,
    for every annotated graph $(G,S)$,
    there exists $S' \subseteq V(G)$ with $S \subseteq S'$ and $\torso_G(S')$ planar
    if and only if, for every $(H,T) \in \mathcal{O}$, $(H,T)$ is not a minor of $(G,S)$.
\end{problem}
Note that, by (the labeled variant of) the Robertson-Seymour Theorem,
such a set $\mathcal{O}$ exists and is finite.
In \Cref{thm:obstructions-forests}, we solved the analogous problem
for forests instead of planar graphs.


Also, it would be interesting to see whether, similar to the \textsc{Steiner Tree} problem~\cite{JS24}, more problems on terminal sets can be shown to be FPT parameterized not just by the size of the terminal set $S$ but rather by $\tw(G,S)$ or other torso parameters.

\section{Acknowledgments}
Some parts of this research were conducted at the Twelfth and Thirteenth Annual Workshop on Geometry and Graphs held at the Bellairs Research Institute in February 2025 and February 2026, respectively. Thanks to the organizers and the other workshop participants for creating a productive working atmosphere.

\bibliographystyle{plain}
\bibliography{bibliography}
\end{document}